\newtheorem{theorem}{Theorem}[section]
\newtheorem{corollary}[theorem]{Corollary}
\newtheorem{definition}[theorem]{Definition}
\newtheorem{example}[theorem]{Example}
\newtheorem{lemma}[theorem]{Lemma}
\newtheorem{proposition}[theorem]{Proposition}
\newenvironment{proof}[1][Proof]{\noindent\textbf{#1.} }{\ \rule{0.5em}{0.5em}}
\begin{document}

\title{Wiener-Hopf difference equations and semi-cardinal interpolation with 
integrable convolution kernels\thanks{This work was supported by Kuwait
University, Research Grant No.\ SM01/18.}}
\author{Aurelian Bejancu\footnote{Department of Mathematics, Kuwait University, 
PO Box 5969, Safat 13060, Kuwait. \quad 
E-mail address: aurelianbejancu@gmail.com}}
\maketitle

\begin{abstract}
Let $H\subset\mathbb{Z}^d$ be a half-space lattice, defined either relative to a fixed 
coordinate (e.g.\ $H = \mathbb{Z}^{d-1}\!\times\!\mathbb{Z}_+$), or relative to a
linear order $\preceq$ on $\mathbb{Z}^d$, i.e.\ $H = \{j\in\mathbb{Z}^d : 0\preceq j\}$. 
We consider the problem of interpolation at the points of $H$ from the space of 
series expansions in terms of the $H$-shifts of a decaying kernel $\phi$. 
Using the Wiener-Hopf factorization of the symbol for cardinal interpolation 
with $\phi$ on $\mathbb{Z}^d$, we derive some essential properties of semi-cardinal 
interpolation on $H$, such as existence and uniqueness, Lagrange series representation, 
variational characterization, and convergence to cardinal interpolation. Our main 
results prove that specific algebraic or exponential decay of the kernel $\phi$ is transferred 
to the Lagrange functions for interpolation on $H$, as in the case of cardinal interpolation. 
These results are shown to apply to a variety of examples, including the Gaussian, Mat\'{e}rn, 
generalized inverse multiquadric, box-spline, and polyharmonic B-spline kernels. \smallskip

\noindent\textbf{Keywords:} Cardinal interpolation; integrable kernels; 
multivariate splines; Wiener's lemma; Wiener-Hopf; off-diagonal decay; Jaffard algebra. 
\smallskip

\noindent\textbf{MSC2020:} 41A05; 41A63, 41A15, 42B05, 47A68, 47B35, 15B05.
\end{abstract}

\section{Introduction}

We start by reviewing some background results concerning \emph{cardinal interpolation} on the 
multi-integer lattice $\mathbb{Z}^d$, which represents a classical model in approximation theory. 

Let $\phi:\mathbb{R}^d\to\mathbb{R}$ be a continuous and symmetric kernel, suitably decaying 
for a large norm of its argument, and denote by $\mathcal{S}(\phi)$ the shift-invariant space of 
convergent infinite linear combinations of $\mathbb{Z}^d$-translates of $\phi$ with bounded 
coefficients. Cardinal interpolation with the kernel $\phi$ is the problem of constructing a function 
$s\in\mathcal{S}(\phi)$ with prescribed values at the points of  $\mathbb{Z}^d$, i.e.\ 
$s(j)=y_j$, $j\in\mathbb{Z}^d$, for a given bounded sequence $y=\{y_{j}\}_{j\in\mathbb{Z}^d}$.

This leads to a bi-infinite system of discrete convolution equations with Laurent matrix 
$L_\phi=[\phi(j-k)]_{j,k\in\mathbb{Z}^d}$. The necessary and sufficient condition for the invertibility 
of this system is that the \emph{symbol} $\sigma_\phi$, defined by the absolutely convergent series
\begin{equation}
\sigma_\phi ( \zeta ) := \sum_{k\in \mathbb{Z}^d} \phi (k) \zeta^{k},
\quad \zeta\in T^d,
\label{eq:CI-symbol}
\end{equation}
has no zero on the $d$-dimensional unit torus $T^d$ (e.g.\ Chui et al.\ 
\cite[Lemma~1.1]{cjw87}). If this condition is satisfied, then, by Wiener's lemma, $1 / \sigma_\phi$ 
also admits an absolutely convergent Fourier representation, with Fourier coefficients 
$\{a_{k}\}_{k\in\mathbb{Z}^d}$, say. In turn, these define the inverse Laurent matrix 
$L_\phi^{-1}=[a_{j-k}]_{j,k\in\mathbb{Z}^d}$, as well as the \emph{Lagrange} 
(or \emph{fundamental}) \emph{function}  
\begin{equation}
\chi ( x ) := \sum_{k\in \mathbb{Z}^d} a_k \phi (x-k) ,\quad x\in\mathbb{R}^d,
\label{eq:CI-lagrange}
\end{equation}
satisfying the interpolation conditions $\chi(0) = 1$ and $\chi(j) = 0$ for $j\in\mathbb{Z}^d\setminus\{0\}$. 
Hence, provided that $\chi$ also decays suitably, the unique solution $s$ to the cardinal interpolation 
problem admits the absolutely and uniformly convergent \emph{Lagrange representation} in terms 
of the shifts of $\chi$:
\begin{equation}
s ( x ) =\sum_{j\in\mathbb{Z}^d} y_{j} \chi ( x-j ) ,\quad x\in\mathbb{R}^d.
\label{eq:CI-scheme}
\end{equation}

Note that the localization quality of this representation is determined by the rate of dampening 
of $|\chi(x)|$ for large $\|x\|$. For instance, if $\phi$ is a univariate B-spline (Schoenberg 
\cite{schoen46,schoen73}) or a multivariate box-spline with non-vanishing symbol $\sigma_\phi$ 
(de Boor et al.\ \cite{dBHR85,dBHR93}), both of whom are compactly 
supported kernels, or if $\phi$ is the Gaussian kernel (Sivakumar \cite{siva97}), then the 
Lagrange function $\chi$ decays exponentially. More generally, it follows via analyticity arguments  
that, if $|\phi(x)| = O (e^{-\alpha\|x\|})$, then $|\chi(x)| = O(e^{-\beta\|x\|})$, for some $\beta\in(0,\alpha)$ 
(see section 2). On the other hand, if $\phi$ has an algebraic decay with rate $\alpha>d$, i.e.\ 
$|\phi(x)| = O ((1+\|x\|)^{-\alpha})$, then $\chi$ decays with exactly the same rate 
$|\chi(x)| = O((1+\|x\|)^{-\alpha})$, as proved by Bacchelli et al.\ \cite{bbrv05}, using 
a well-known result of Jaffard \cite{sj90}.

Recently, Fageot et al.\ \cite{fuw19} considered a different type of decay, expressed as a  
weighted $\ell_1$-type condition, for a weight function $w$:
\begin{equation}
\sup_{x\in [0,1]^d} \sum_{k\in \mathbb{Z}^d} | \phi (x-k) | w(k) < \infty.
\label{eq:weighted-decay}
\end{equation}
Assuming that $w$ fulfills certain admissibility conditions and the cardinal symbol $\sigma_\phi$ 
does not vanish on the unit torus, they proved \cite[Proposition~27]{fuw19} that property 
(\ref{eq:weighted-decay}) is inherited by the corresponding Lagrange function $\chi$. 

The above results on transferring the decay of the kernel $\phi$ to its associated Lagrange function 
$\chi$ for cardinal interpolation rely crucially on the fact that Wiener's lemma holds not only in the 
classical Wiener algebra of symbols with absolutely convergent Fourier expansion, but also in 
subalgebras of symbols defined by stronger localization properties of Fourier coefficients, such 
as exponential or algebraic decay, or finiteness of some weighted $\ell_1$-norm. This fundamental 
principle of localization transfer under inversion, or inverse-closedness, has been extended in 
recent decades far beyond 
commutative algebras of symbols (or, equivalently, of Laurent matrices), to large classes of 
non-commutative algebras of matrices, integral operators, and pseudo-differential operators with 
off-diagonal decay (e.g.\ Gr\"{o}chenig \cite{gro10} and references therein).

Our paper deals with a new application of this principle to approximation theory, in the non-commutative 
setting of semi-infinite Toeplitz matrices generated by the problem of semi-cardinal kernel interpolation 
on a half-space lattice $H\subset\mathbb{Z}^d$. We will consider two types of half-space lattices, 
determined either relative to a fixed coordinate (e.g.\ $H = \mathbb{Z}^{d-1}\!\times\!\mathbb{Z}_+$), 
or relative to a linear order $\preceq$ on $\mathbb{Z}^d$, such that $\mathbb{Z}^d$ becomes an 
(additive) ordered group and $H = \{j\in\mathbb{Z}^d : 0\preceq j\}=:\mathbb{Z}^d_{\preceq,+}$. In both 
cases, we formulate the problem of \emph{semi-cardinal interpolation} on $H$ with the kernel $\phi$, 
as follows: 

Given a bounded sequence of real values $y=\{y_j\}_{j\in H}$, determine a bounded sequence of 
coefficients $c=\{c_k\}_{k\in H}$, such that the function 
\begin{equation}
s ( x ) = \sum_{k\in H} c_k \phi (x-k) ,\quad x\in\mathbb{R}^d,
\label{eq:SCI-repres}
\end{equation}
satisfies the interpolation conditions $s(j) = y_j$, for all $j\in H$. 

This problem is equivalent to solving the multi-index Wiener-Hopf system of difference 
equations
\begin{equation}
\sum_{k\in H} c_k \phi (j-k) = y_j,\quad j\in H,
\label{eq:SCI-system}
\end{equation}
with the semi-infinite Toeplitz matrix $T_\phi=[\phi(j-k)]_{j,k\in H}$. Based on methods developed 
in the case $d=1$ by Krein \cite{mgk58} and by Calderon et al.\ \cite{csw59}, the necessary and 
sufficient conditions for the invertibility of such systems, as well as the explicit form of their solution 
in the multi-index case ($d>1$), were given, for $H = \mathbb{Z}^{d-1}\!\times\!\mathbb{Z}_+$, 
by Goldenstein and Gohberg \cite{lsg64,gg60}, and, for $H=\mathbb{Z}^d_{\preceq,+}$, by 
van der Mee et al.\ \cite{vsr03} (see also B\"{o}tcher and Silbermann \cite{bs90}, Gohberg and 
Feldman \cite{gf74}). However, these studies did not consider 
whether any specific off-diagonal decay of the Toeplitz matrix $T_\phi$, expressed as 
decay of the sequence $\{\phi(k)\}_{k\in\mathbb{Z}^d}$, may carry over to the entries of the 
inverse matrix $T_\phi^{-1}$.

Using the explicit form of $T_\phi^{-1}$, our main results, Theorems~\ref{th:SCI-decay-symb}
and \ref{th:SCI-decay-Lag}, prove that, in analogy to cardinal interpolation, the algebraic 
or exponential decay of the kernel $\phi$ is converted into corresponding off-diagonal decay of 
the inverse matrix and further transferred to the Lagrange functions 
for semi-cardinal interpolation. We also establish, under minimal assumptions on $\phi$, 
some fundamental properties of semi-cardinal kernel interpolation that follow from the 
Schur condition satisfied by the matrix $T_\phi^{-1}$. 

These results are obtained in section 3, by constructing the Lagrange scheme for semi-cardinal 
interpolation on $H$. Whereas the cardinal interpolation scheme (\ref{eq:CI-scheme}) uses the 
shifts of a single function $\chi$, this is no longer valid for the semi-cardinal Lagrange functions. 
Thus, for each $j\in H$, we construct a separate Lagrange function $\chi_j$, whose coefficient 
sequence in representation (\ref{eq:SCI-repres}) is defined by means of the factors of the so-called 
\emph{Wiener-Hopf} (or \emph{canonical}) \emph{factorization} of the cardinal inverse symbol 
$\sigma_\phi^{-1}$. This factorization procedure leads to a direct method, via discrete convolution 
estimates, for proving the required decay properties 
for the family of Lagrange functions $\chi_j$, $j\in H$, with constants that are independent of $j$. 
Specifically, the transfer of exponential decay to the Wiener-Hopf factors employs 
analyticity arguments, while the similar property for algebraic decay is based on a Wiener-L\'{e}vy 
lemma for Jaffard's algebra of symbols (Lemma~\ref{le:wiener-levy}). 
A second, indirect operatorial method of proof is also provided, by applying Jaffard's 
well-known inverse-closedness results \cite{sj90} for infinite matrices with off-diagonal decay.

We remark that, for $d>1$, interpolation on $H=\mathbb{Z}^{d-1}\!\times\!\mathbb{Z}_+$ from 
the shifts of a decaying kernel has previously been considered by Bejancu and Sabin 
\cite{ab06,bs05} only for $d=2$ 
and $\phi = M_{2,2,2}$, the compactly supported three-direction box-spline of multiplicity $2$ 
in each direction. In these references, the representation (\ref{eq:SCI-repres}) is augmented 
with an extra layer of shifts of the kernel, which allows the formulation of boundary conditions 
in terms of kernel coefficients. As detailed in Example~\ref{ex:box}, the approximation 
results obtained for the stationary schemes treated in \cite{ab06,bs05} demonstrate that 
semi-cardinal interpolation can be an effective model for the analysis of boundary effects 
of kernel interpolation schemes under scaling. On the other hand, kernel interpolation 
on a half-space lattice $H=\mathbb{Z}^d_{\preceq,+}$ induced by a linear order on 
$\mathbb{Z}^d$ has not been studied before. As shown in Corollary~\ref{cor:SCI-operator}, 
the order structure leads to a Cholesky factorization of the inverse matrix $T_\phi^{-1}$, 
via the notion of a multi-index triangular matrix. 

The paper is laid out as follows. In section 2, we review material pertaining to cardinal 
interpolation, including basic assumptions on the kernel $\phi$, as well as some results and 
concepts that are used in the next sections. The main results on semi-cardinal interpolation 
appear in section 3, where we treat both types of half-space lattices in parallel, specifying 
their differences en route. Section 4 obtains two types of results, the first one dealing with 
the approximation relationship between the cardinal and semi-cardinal interpolation schemes, 
while the second proving a variational characterization that extends a classical spline 
property. In the last section, our results are applied to five classes of examples: the Gaussian, 
Mat\'{e}rn, generalized inverse multiquadric, box-spline, and polyharmonic B-spline kernels, leading 
not only to new results, but also to some improvements and extensions of current literature.
\medskip

\noindent
\textbf{Notation remarks.} 
In the sequel, we use $\ell_p := \ell_p (\mathbb{Z}^d)$. Also,
$T^d$ denotes the unit $d$-dimensional torus
$T^d := \{ \zeta=(\zeta_1,\dots,\zeta_d) \in \mathbb{C}^d : |\zeta_1|=\ldots=|\zeta_d|=1 \}$,
and $W:=W(T^d)$ is the Wiener algebra of continuous functions $u$ on 
$T^d$ with Fourier coefficients $\{\widehat{u}_k\}_{k\in\mathbb{Z}^d}\in\ell_1$. The norm 
of $u\in W$ is $\| u \|_W = \sum_{k\in\mathbb{Z}^d} | \widehat{u}_k |$. For 
$\zeta=(\zeta_1,\dots,\zeta_d) \in \mathbb{C}^d$ and $k=(k_1,\dots,k_d)\in\mathbb{Z}^d$, 
we use $\zeta^k = \zeta_1^{k_1}\cdots\zeta_d^{k_d}$.

\section{Cardinal interpolation on the lattice $\mathbb{Z}^d$}
\addtocounter{equation}{-6}

This section has two parts: in the first, we review the basic questions on existence, uniqueness, 
and Lagrange representation for solutions of the cardinal interpolation problem, under mild 
assumptions on the kernel $\phi$. The second part imposes specific algebraic or exponential decay 
conditions on $\phi$ and looks at how these are transferred to the Lagrange function $\chi$ for 
cardinal interpolation. The presentation is intended as a blueprint for the new results on 
semi-cardinal interpolation provided in the next section.

\subsection{The cardinal interpolation scheme}

Let $\phi:\mathbb{R}^d\to\mathbb{R}$ be a continuous and \emph{symmetric} function, 
i.e. $\phi (-x) = \phi (x)$,  $x\in\mathbb{R}^d$.
The main assumption we make on $\phi$ in this subsection is the condition
\begin{equation}
|\phi|_\infty := \sup_{x\in [0,1]^d} \sum_{k\in \mathbb{Z}^d} | \phi (x-k) | < \infty,
\label{eq:minimal}
\end{equation}
which ensures that the shift-invariant space $\mathcal{S}(\phi)$ described in section~1 
consists of well-defined continuous functions $s$ of the form
\begin{equation}
s ( x ) = \sum_{k\in \mathbb{Z}^d} c_k \phi (x-k) ,\quad x\in\mathbb{R}^d,
\label{eq:CI-repres}
\end{equation}
for an arbitrary bounded sequence $c=\{c_k\}_{k\in\mathbb{Z}^d}$. Indeed, the last series is 
then absolutely and uniformly convergent on $\mathbb{R}^d$. 
Note that (\ref{eq:minimal}) also implies 
$\phi\in L_1 (\mathbb{R}^d)\cap L_\infty (\mathbb{R}^d)$.

Assumption (\ref{eq:minimal}) and related conditions using $L_p$-norms on 
the cube $[0,1]^d$, have been employed by Jia and Micchelli \cite{jm91}, in their work on 
multiresolution analysis, and are also known to provide a natural setting for principal shift-invariant 
theory, e.g.\ Johnson \cite{mjj97}. In the context of cardinal interpolation, Fageot 
et al.\ \cite{fuw19} have recently used the stronger condition (\ref{eq:weighted-decay}), 
which they relate to the general concept of Wiener amalgam spaces. 

As stated in the Introduction, for a bounded sequence $y=\{y_{j}\}_{j\in\mathbb{Z}^d}$ of real values, 
the problem of cardinal interpolation with $\phi$ is to find a function of the form (\ref{eq:CI-repres}), 
such that
\begin{equation}
s ( j ) =y_{j},\qquad \forall \ j\in \mathbb{Z}^d.
\label{eq:CI-problem}
\end{equation}
This amounts to the infinite system of discrete 
convolution equations for the coefficient sequence $c$:
\begin{equation}
\sum_{k\in \mathbb{Z}^d} c_k \phi (j-k) = y_j,\quad j\in\mathbb{Z}^d.
\label{eq:CI-system}
\end{equation}
We use the notation $L_\phi = [\phi(j-k)]_{j,k\in\mathbb{Z}^d}$ for both the bi-infinite multi-index 
Laurent matrix of this system and the corresponding convolution operator such that, for a bounded 
sequence $c$, $L_\phi c$ is the sequence whose $j$-th component is given by the 
left-hand side of (\ref{eq:CI-system}). Then, for any $p\in[1,\infty]$, 
$L_{\phi} \colon \ell_p  \to \ell_p $ is a bounded linear operator, 
since, by the classical Schur test (see Gr\"{o}chenig \cite{gro01}), 
\begin{equation*}
\| L_{\phi} c \|_p  \leq \| \sigma_\phi \|_W  \| c \|_p , \quad c\in \ell_p.
\end{equation*}
Recall that $\sigma_\phi$ is the symbol defined by (\ref{eq:CI-symbol}),
which belongs to the Wiener algebra $W$, under assumption (\ref{eq:minimal}). 

As mentioned already, it is well-known (e.g.\ Chui et al.\ \cite[Lemma~1.1]{cjw87}) 
that the necessary and sufficient condition for the invertibility of $L_\phi$, i.e.\  the invertibility 
of the system (\ref{eq:CI-system}), is that $\sigma_\phi ( \zeta ) \not = 0$, for all $\zeta\in T^d$. 
Under this condition, Wiener's lemma (e.g.\ Rudin \cite[p.~278]{rudin91}) ensures that the 
\emph{inverse} symbol $1/\sigma_\phi$ is also in $W$ and the solution of (\ref{eq:CI-system}) 
is given explicitly by the formula
\begin{equation}
c_k = \sum_{j\in \mathbb{Z}^d} a_{k-j} y_j,\quad k\in\mathbb{Z}^d,
\label{eq:CI-sol-coeffs}
\end{equation}
where $a=\{a_k\}_{k\in\mathbb{Z}^d}\in\ell_1$ is the sequence of Fourier 
coefficients of $1/\sigma_\phi$. 

Since the symbol $\sigma_\phi$ is continuous on $T^d$ and it takes only real values 
(due to the symmetry of $\phi$), we may replace the non-vanishing condition on $T^d$, 
without loss of generality, by the positivity condition
\begin{equation}
\sigma_\phi ( \zeta ) > 0,\quad  \zeta\in T^d.
\label{eq:pos-symbol}
\end{equation}
%

%
\begin{theorem}      \label{th:E&U-CI}
Suppose that the continuous and symmetric kernel $\phi$ satisfies 
\textup{(\ref{eq:minimal})} and \textup{(\ref{eq:pos-symbol})}. Then, 
for any sequence of data values $y\in \ell_{\infty}$, there exists a unique sequence of 
coefficients $c\in \ell_{\infty}$, such that the function \textup{(\ref{eq:CI-repres})} 
satisfies the cardinal interpolation system \textup{(\ref{eq:CI-problem})}. Specifically, 
$c$ is given by the explicit formula \textup{(\ref{eq:CI-sol-coeffs})} and the corresponding 
interpolant $s$ admits the Lagrange representation \emph{(\ref{eq:CI-scheme})}, 
where $\chi:=\chi_\phi$ is the Lagrange function \emph{(\ref{eq:CI-lagrange})} 
that interpolates the Kronecker delta sequence: 
\begin{equation}
\chi (j) = \delta_{j,0}, \quad j\in\mathbb{Z}^d.
\label{eq:CI-delta}
\end{equation}
Further, if $I_{\phi}$ denotes the interpolation operator that associates to each bounded 
sequence $y$ its unique cardinal interpolant $I_{\phi}\, y:=s$, as above, then this operator 
is bounded from $\ell_{\infty}$ to $L_{\infty} (\mathbb{R}^d)$, and its norm 
(or `Lebesgue constant') satisfies
\begin{equation}
\| I_{\phi} \|_{\infty} = |\chi|_\infty \leq \|\omega_\phi\|_W |\phi|_\infty,
\label{eq:CI-Lebesgue}
\end{equation}
where $\omega_\phi := 1/\sigma_\phi$ and the notation $|\cdot|_\infty$ is defined in the left-hand 
side of \textup{(\ref{eq:minimal})}.
\end{theorem}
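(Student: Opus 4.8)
The plan is to build everything from Wiener's lemma and the Schur test already recorded above. Under (\ref{eq:pos-symbol}) the symbol $\sigma_\phi$ is non-vanishing on $T^d$, so Wiener's lemma yields $\omega_\phi := 1/\sigma_\phi \in W$, with Fourier coefficients $a = \{a_k\}_{k\in\mathbb{Z}^d}\in \ell_1$. First I would record the convolution identity $\sum_{k\in\mathbb{Z}^d} a_{j-k}\,\phi(k) = \delta_{j,0}$, $j\in\mathbb{Z}^d$: this is merely the statement that the $\ell_1$-convolution of $\{a_k\}$ and $\{\phi(k)\}$ has symbol $\omega_\phi\,\sigma_\phi \equiv 1$, so by uniqueness of Fourier coefficients its entries are $\delta_{j,0}$. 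Reading this identity as a product of Laurent matrices shows that $[a_{j-k}]_{j,k\in\mathbb{Z}^d}$ is a two-sided inverse of $L_\phi$, both being bounded on $\ell_\infty$ by the Schur bound (with operator norms at most $\|\sigma_\phi\|_W$ and $\|\omega_\phi\|_W = \|a\|_1$, respectively). Hence $L_\phi$ is invertible on $\ell_\infty$, the system (\ref{eq:CI-system}) has the unique solution $c = L_\phi^{-1} y \in \ell_\infty$, given coordinatewise by (\ref{eq:CI-sol-coeffs}).

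Next I would treat the Lagrange function. Since $a\in\ell_1$ and (\ref{eq:minimal}) gives $\sum_k |\phi(x-k)| \le |\phi|_\infty$ uniformly in $x$ (the sum being integer-shift periodic), the series (\ref{eq:CI-lagrange}) converges absolutely and uniformly, so $\chi$ is a well-defined continuous element of $\mathcal{S}(\phi)$. Evaluating at $j\in\mathbb{Z}^d$ gives $\chi(j) = \sum_k a_k\,\phi(j-k)$, which is exactly the convolution identity above, hence (\ref{eq:CI-delta}). To obtain the Lagrange representation, I would substitute (\ref{eq:CI-sol-coeffs}) into (\ref{eq:CI-repres}) and interchange the two summations, which is legitimate because the double series is dominated by $\|y\|_\infty\,\|a\|_1\,|\phi|_\infty < \infty$; reindexing the inner sum by $m = k-j$ collapses it to $\chi(x-j)$, yielding (\ref{eq:CI-scheme}).

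Finally, for the operator norm I would estimate directly from (\ref{eq:CI-scheme}): for $\|y\|_\infty \le 1$,
\[
|s(x)| \le \sum_{j\in\mathbb{Z}^d} |\chi(x-j)| \le |\chi|_\infty, \quad x\in\mathbb{R}^d,
\]
where the supremum over $\mathbb{R}^d$ equals that over $[0,1]^d$ by integer-shift periodicity; thus $\|I_\phi\|_\infty \le |\chi|_\infty$. The matching lower bound follows by choosing, for $x_0\in[0,1]^d$ nearly attaining $|\chi|_\infty$, the admissible data $y_j = \operatorname{sgn}\chi(x_0 - j)$, which forces $s(x_0) = \sum_j |\chi(x_0-j)|$; taking the supremum over such $x_0$ gives $\|I_\phi\|_\infty \ge |\chi|_\infty$, hence equality. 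The stated bound $|\chi|_\infty \le \|\omega_\phi\|_W\,|\phi|_\infty$ then comes from $\sum_j |\chi(x-j)| \le \sum_k |a_k| \sum_j |\phi(x-j-k)| \le \|a\|_1\,|\phi|_\infty$, applying (\ref{eq:minimal}) term by term.

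The routine analytic content is mild; the one point that genuinely carries the argument is the passage from the scalar identity $\omega_\phi\,\sigma_\phi \equiv 1$ to the operator statement that $[a_{j-k}]$ inverts $L_\phi$ on $\ell_\infty$ — that is, the correct bookkeeping of convolution versus multiplication, together with the Schur boundedness that keeps every rearrangement inside absolutely convergent series. Everything else (continuity of $\chi$, the interchange of sums, and the sign choice in the lower bound) is standard once (\ref{eq:minimal}) and $a\in\ell_1$ are in hand.
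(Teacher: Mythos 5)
Your proposal is correct and follows essentially the same route as the paper: Wiener's lemma gives $a\in\ell_1$, the identity $\omega_\phi\sigma_\phi\equiv 1$ yields the delta-interpolation property and (via absolutely convergent rearrangements justified by (\ref{eq:minimal}) and the Schur test) the two-sided inversion of $L_\phi$ on $\ell_\infty$, and the Lebesgue constant is handled through the periodic Lebesgue function $\sum_j|\chi(\cdot-j)|$. The only cosmetic differences are that you package the uniqueness/existence step in operator language rather than as explicit double sums, and that you write out the sign-choice lower bound for $\|I_\phi\|_\infty$, which the paper delegates to a citation.
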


\begin{proof}
The series (\ref{eq:CI-lagrange}) is absolutely and uniformly convergent on $\mathbb{R}^d$, 
since $a\in\ell_1$ and $\phi$ is bounded. Substituting $x=j\in\mathbb{Z}^d$,  
the interpolation conditions (\ref{eq:CI-delta}) follow, simply by identifying the Fourier 
coefficients of the product $\sigma_\phi^{-1} \sigma_\phi = 1$.

To prove uniqueness, consider the homogeneous system of discrete convolution 
equations associated to (\ref{eq:CI-system}):
\begin{equation*}
\sum_{k\in \mathbb{Z}^d} c_k \phi (j-k) = 0,\quad j\in\mathbb{Z}^d,
\end{equation*}
where $c$ is a bounded sequence. For every $l\in\mathbb{Z}^d$, multiplying each 
of the above equations by $a_{l-j}$ and summing over $j$, one obtains, after re-indexing,
\begin{eqnarray*}
0 & = & \sum_{j\in \mathbb{Z}^d} a_{l-j} \sum_{k\in \mathbb{Z}^d} c_k \phi (j-k)   
\ = \ \sum_{k\in \mathbb{Z}^d} c_k \sum_{j\in \mathbb{Z}^d} a_{l-j} \phi (j-k)   \\
& = & \sum_{k\in \mathbb{Z}^d} c_k \sum_{j\in \mathbb{Z}^d} a_{j} \phi (l-k-j)  
\ = \ \sum_{k\in \mathbb{Z}^d} c_k \chi (l-k) 
\ = \ \sum_{k\in \mathbb{Z}^d} c_k \delta_{l-k,0} \ = \ c_l.
\end{eqnarray*}
Changing the order of summation in the second equality is justified by the absolute 
convergence of the double sum:
\begin{eqnarray*}
\sum_{j,k\in \mathbb{Z}^d} |a_{l-j} c_k \phi (j-k)|  
& = & \sum_{j\in \mathbb{Z}^d} |a_{l-j}| \sum_{k\in \mathbb{Z}^d} |c_k| |\phi (j-k)|   \\
& \leq & \|a\|_1 \|c\|_{\infty} \sum_{k\in \mathbb{Z}^d} |\phi (k)| \ < \ \infty.
\end{eqnarray*}

Turning to the existence statement, we have, for all $x\in\mathbb{R}^d$:
\begin{eqnarray*}
\lefteqn{ \sum_{j\in \mathbb{Z}^d} y_{j} \chi (x-j)   
\ = \ \sum_{j\in \mathbb{Z}^d} y_j \sum_{k\in \mathbb{Z}^d} a_{k} \phi (x-j-k) }  \\
& = & \sum_{j\in \mathbb{Z}^d} y_j \sum_{k\in \mathbb{Z}^d} a_{k-j} \phi (x-k)  
\ = \ \sum_{k\in \mathbb{Z}^d} c_k \phi (x-k),
\end{eqnarray*}
where $c_k$ is given by (\ref{eq:CI-sol-coeffs}). The above manipulations are enabled by the 
absolute convergence of the double series:
\begin{eqnarray*}
\sum_{j,k\in \mathbb{Z}^d} |y_j a_{k-j} \phi (x-k)|  
& = & \sum_{k\in \mathbb{Z}^d} |\phi (x-k)| \sum_{j\in \mathbb{Z}^d} |y_j| |a_{k-j}|  \\
& \leq & \|y\|_{\infty} \|a\|_1 \sup_{x\in\mathbb{R}^d} \sum_{k\in \mathbb{Z}^d} |\phi (x-k)| \ < \ \infty.
\end{eqnarray*}
It follows that the function on the right-hand side of (\ref{eq:CI-scheme}), which evidently satisfies 
the cardinal interpolation conditions (\ref{eq:CI-problem}), is of the required form (\ref{eq:CI-repres}). 
Therefore the unique cardinal interpolant of this form to the data sequence $y$ is given by 
the Lagrange representation (\ref{eq:CI-scheme}).

For the last statement of the theorem, observe that, by the argument of the previous paragraph, 
the `Lebesgue function' $\Lambda := \sum_{j\in\mathbb{Z}^d} |\chi ( \cdot - j )|$ is periodic and 
continuous, being the sum of a uniformly convergent series of continuous functions. Hence, 
\begin{equation*}
\sup_{x\in\mathbb{R}^d} \Lambda (x) = \max_{x\in [0,1]^d} \Lambda (x) = |\chi|_\infty 
\leq \|\omega_\phi\|_W  |\phi|_\infty.
\end{equation*}
The equality 
$\| I_{\phi} \|_{\infty} = \sup_{x\in\mathbb{R}^d} \Lambda (x)$ follows via a standard argument 
(e.g.\ Riemenschneider and Sivakumar \cite[Theorem 3.1]{rs99b}).
\end{proof}

\begin{corollary}
Under the hypotheses of Theorem~\ref{th:E&U-CI}, for any function $s$ of the form 
\textup{(\ref{eq:CI-repres})}, where $c$ is a bounded sequence, 
the following reproduction formula holds:
\begin{equation}
s ( x ) =\sum_{j\in\mathbb{Z}^d} s(j) \chi ( x-j ) ,\quad x\in\mathbb{R}^d.
\label{eq:CI-reprod}
\end{equation}
In particular, $\phi =\sum_{j\in\mathbb{Z}^d} \phi(j) \chi ( \cdot-j )$.
\end{corollary}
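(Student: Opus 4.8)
The plan is to recognize the given function $s$ as the cardinal interpolant to its own sequence of integer samples, and then to invoke the uniqueness furnished by Theorem~\ref{th:E&U-CI}. Concretely, I would set $y_j := s(j)$ for $j\in\mathbb{Z}^d$ and argue that $s$ already solves the cardinal interpolation problem for this data, so it must coincide with the Lagrange representation (\ref{eq:CI-scheme}).

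First I would check that the data $y=\{s(j)\}_{j\in\mathbb{Z}^d}$ is admissible, i.e.\ $y\in\ell_\infty$. This is immediate from (\ref{eq:minimal}): since $c\in\ell_\infty$, the bound $|s(x)|\le \|c\|_\infty \sum_{k\in\mathbb{Z}^d}|\phi(x-k)| \le \|c\|_\infty\,|\phi|_\infty$ holds for every $x\in\mathbb{R}^d$, and in particular $\|y\|_\infty\le \|c\|_\infty\,|\phi|_\infty<\infty$. Thus Theorem~\ref{th:E&U-CI} applies to the sequence $y$.

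Next, by construction the original coefficient sequence $c$ satisfies the convolution system (\ref{eq:CI-system}) with right-hand side $y_j=s(j)$. Since $c\in\ell_\infty$, the uniqueness part of Theorem~\ref{th:E&U-CI} forces $c$ to be the unique bounded solution of that system, hence $c_k=\sum_{j\in\mathbb{Z}^d} a_{k-j}y_j$, as in (\ref{eq:CI-sol-coeffs}). Feeding this back through the manipulation carried out in the existence part of the proof of Theorem~\ref{th:E&U-CI} --- namely the chain $\sum_{j} y_j\chi(x-j)=\sum_{k} c_k\phi(x-k)$, valid by absolute convergence --- yields $\sum_{j\in\mathbb{Z}^d} s(j)\chi(x-j)=\sum_{k\in\mathbb{Z}^d} c_k\phi(x-k)=s(x)$, which is exactly (\ref{eq:CI-reprod}).

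Finally, for the ``in particular'' statement I would apply the reproduction formula to $s=\phi$, which is of the form (\ref{eq:CI-repres}) with the bounded coefficient sequence $c_k=\delta_{k,0}$; the samples $s(j)=\phi(j)$ then give $\phi=\sum_{j\in\mathbb{Z}^d}\phi(j)\chi(\cdot-j)$. There is no genuine obstacle here beyond correctly translating the uniqueness of the coefficient sequence into the uniqueness of the interpolating function; the only points requiring care are verifying the admissibility $y\in\ell_\infty$ and re-using the absolute-convergence justification from the theorem's proof, so that the interchange of summations remains legitimate.
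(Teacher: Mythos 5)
Your proof is correct and is exactly the argument the paper intends (the corollary is stated without a written proof precisely because it follows from Theorem~\ref{th:E&U-CI} in this way): the samples $y_j=s(j)$ are bounded via (\ref{eq:minimal}), uniqueness of the bounded coefficient sequence identifies $c$ with (\ref{eq:CI-sol-coeffs}), and the existence computation converts $\sum_k c_k\phi(x-k)$ into $\sum_j s(j)\chi(x-j)$. The specialization to $\phi$ with $c_k=\delta_{k,0}$ is also handled correctly.
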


\noindent
\textbf{Remarks.} 
(i) As the above proof shows, the finiteness of the Lebesgue constant (\ref{eq:CI-Lebesgue}) 
is a direct consequence of the similar condition (\ref{eq:minimal}) 
satisfied by $\phi$. This is a special case ($p=\infty$) of the $L_p$-result on periodizations 
of semi-discrete convolutions obtained by Jia and Micchelli \cite[Theorem~2.1]{jm91}. 
It is also a special case (for the trivial weight) of the recent result of Fageot et al.\ 
\cite[Lemma~26]{fuw19} concerning the transfer of weighted versions of (\ref{eq:minimal}) 
to $\chi$. The aim of the next subsection is to show that specific algebraic or exponential 
decay of $\phi$ is also inherited by the cardinal Lagrange function $\chi$.

(ii) A different consequence of \cite[Theorem~2.1]{jm91} is that the cardinal interpolation 
operator $I_{\phi}$ of Theorem~\ref{th:E&U-CI} is a bounded operator from $\ell_p$ to 
$L_p (\mathbb{R}^d)$, for $1\leq p\leq \infty$ (see also \cite[Theorem~IV.14]{dBHR93}).

\subsection{Transfer of kernel decay to Lagrange function}

Here, we consider kernels with algebraic or exponential decay and we demonstrate how such 
decay carries over to the Lagrange function for cardinal interpolation. First, we collect 
the results needed on symbols $u\in W$, whose Fourier coefficients 
$\{\widehat{u}_k\}_{k\in\mathbb{Z}^d}$ decay algebraically or exponentially. 
For $x=(x_1,\ldots,x_d)\in\mathbb{R}^d$, we use $\|x\| = (x_1^2 + \cdots + x_d^2)^{1/2}$ to 
denote the Euclidean norm and $|x| = |x_1| + \cdots + |x_d|$, the $1$-norm. 
\medskip

\noindent
\textbf{Two inverse-closed classes of symbols}

\begin{definition} 	\label{def:symbol-class}
\emph{(i)} Let $\alpha>d$. A continuous symbol $u$ on $T^d$ belongs to the class 
$J_\alpha:=J_\alpha (T^d)$ if there exists $C(u)>0$, such that 
\begin{equation}
|\widehat{u}_k| \leq C(u) (1+ \|k\|)^{-\alpha}, \quad  k\in \mathbb{Z}^d.
\label{eq:Jaffard-alg}
\end{equation}

\emph{(ii)} For $\alpha>0$, a continuous symbol $u$ on $T^d$ belongs to the class 
$\mathcal{E}_\alpha:=\mathcal{E}_\alpha (T^d)$ if there exists $C(u)>0$, such that
\begin{equation}
|\widehat{u}_k| \leq C(u) \, e^{-\alpha |k|},  \quad  k\in \mathbb{Z}^d.
\label{eq:DCP-exp1}
\end{equation}
Also, define $\mathcal{E}:= \bigcup_{\alpha>0} \mathcal{E}_\alpha$.
\end{definition}

The next lemma implies that $J_\alpha$ and $\mathcal{E}$ are subalgebras of 
the Wiener algebra $W$. Although this follows from a more general result of Jaffard 
\cite[Proposition~1]{sj90} concerning infinite matrices with off-diagonal decay, the statement 
and proof given here provide the decay constants explicitly, as needed in section~3.

%
\begin{lemma}      \label{le:DCP}
\textup{(i)} If $u,v\in J_\alpha$, then $uv\in J_\alpha$, where the corresponding constant 
in \emph{(\ref{eq:Jaffard-alg})} can be chosen as $C(uv) := 2^\alpha (C(u) \|v\|_W + C(v) \|u\|_W)$.

\textup{(ii)} If $u,v\in\mathcal{E}_\beta$, for some $\beta>0$, then $uv\in\mathcal{E}_{\beta'}$ for every 
$\beta' \in (0,\beta)$, with a corresponding constant in \emph{(\ref{eq:DCP-exp1})} given by 
$C(uv):=C(u) C(v) \coth^d ( \frac{\beta-\beta'}{2} )$. 
\end{lemma}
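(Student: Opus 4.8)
The plan is to prove both parts by a direct convolution estimate on the Fourier coefficients, since the Fourier coefficients of a product $uv$ are the convolution $\widehat{(uv)}_k = \sum_{m\in\mathbb{Z}^d} \widehat{u}_m \widehat{v}_{k-m}$. In each case I would split this convolution sum according to which of the two factors carries the dominant decay, bound one factor by its pointwise decay estimate and the other by the Wiener norm $\|\cdot\|_W = \sum |\widehat{\cdot}_m|$, and then extract the stated decay rate with the claimed constant.

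\medskip

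For part \textup{(i)}, the key is a standard ``peak'' inequality for the algebraic weight: for each $k$ and each splitting index $m$, at least one of $\|m\|$ or $\|k-m\|$ is at least $\|k\|/2$, so that $(1+\|m\|)^{-\alpha}$ or $(1+\|k-m\|)^{-\alpha}$ is bounded by $2^\alpha (1+\|k\|)^{-\alpha}$. Concretely, I would partition $\mathbb{Z}^d$ into the index set where $\|k-m\|\ge \|m\|$ (so $\|k-m\|\ge\|k\|/2$) and its complement. On the first set I bound $|\widehat{v}_{k-m}|\le C(v)(1+\|k-m\|)^{-\alpha}\le C(v)2^\alpha(1+\|k\|)^{-\alpha}$ and sum $|\widehat{u}_m|$ to get $\|u\|_W$; on the second set I do the symmetric estimate with the roles of $u$ and $v$ reversed. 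Adding the two pieces yields exactly the bound $|\widehat{(uv)}_k|\le 2^\alpha\bigl(C(u)\|v\|_W + C(v)\|u\|_W\bigr)(1+\|k\|)^{-\alpha}$, giving the stated $C(uv)$.

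\medskip

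For part \textup{(ii)}, I would use the factorization of the exponential weight $e^{-\alpha|k|}=\prod_{i=1}^d e^{-\alpha|k_i|}$, which turns the convolution estimate into a product of one-dimensional geometric sums. Using $e^{-\beta|m|}e^{-\beta|k-m|}=e^{-\beta'|k|}\cdot e^{-(\beta-\beta')|k|}e^{-\beta(|m|+|k-m|)+\beta'|k|}$, the extra factor $e^{-(\beta-\beta')|k|}$ is what allows the loss of decay rate from $\beta$ to $\beta'$, while the remaining sum over $m$ is bounded by a product over coordinates of $\sum_{n\in\mathbb{Z}} e^{-(\beta-\beta')|n|}$, which is a geometric series evaluating to $\coth\bigl(\tfrac{\beta-\beta'}{2}\bigr)$. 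The factor $\coth^d(\cdot)$ then arises from the $d$ coordinate directions, and together with the constants $C(u)C(v)$ extracted from the exponential bounds on the two factors this produces the stated $C(uv)$.

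\medskip

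The main obstacle I expect is the bookkeeping in part \textup{(ii)}: one must carry out the algebraic identity on the exponents cleanly so that the coordinate-wise geometric sums $\sum_{n\in\mathbb{Z}} e^{-(\beta-\beta')|n|} = (1+e^{-(\beta-\beta')})/(1-e^{-(\beta-\beta')}) = \coth(\tfrac{\beta-\beta'}{2})$ separate exactly as a $d$-fold product, with no cross terms spoiling the factorization. In contrast, the algebraic case is routine once the peak inequality is in hand. Both estimates simultaneously show that the convolution $\widehat{u}\ast\widehat{v}$ is absolutely summable, confirming $uv\in W$ as well, so the asserted subalgebra property follows as an immediate consequence.
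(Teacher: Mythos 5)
Your proposal follows essentially the same route as the paper: part (i) is the same two-region splitting of the convolution sum combined with the peak inequality $(1+\|k\|/2)^{-\alpha}\le 2^{\alpha}(1+\|k\|)^{-\alpha}$, and part (ii) is the same coordinate-wise geometric-sum argument producing $\coth^{d}\bigl(\tfrac{\beta-\beta'}{2}\bigr)$. One slip to fix: the displayed ``identity'' in part (ii) is not an identity, since its right-hand side carries a spurious extra factor $e^{-(\beta-\beta')|k|}$; the correct step is $e^{-\beta(|m|+|k-m|)}=e^{-\beta'|k|}\,e^{-\beta(|m|+|k-m|)+\beta'|k|}$, followed by the triangle inequality $|m|+|k-m|\ge|k|$ to bound the second factor by $e^{-(\beta-\beta')|m|}$, which is exactly how the paper (equivalently, via $-|j-k|\le|k|-|j|$) peels off the rate $\beta'$ before summing the geometric series. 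With that correction your constants agree with the paper's in both parts.
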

%
\begin{proof}
(i) This was previously proved in Bacchelli et al.\ \cite[Lemma~1]{bbrv05}, where it is formulated 
equivalently for sequences, with the discrete convolution product, rather than symbols.
For each $j\in\mathbb{Z}^d$, the $j$-th Fourier coefficient of the product $uv$ is estimated 
via a classical convolution argument:
\begin{eqnarray*}
|\widehat{(uv)}_j| & = & |\sum_{k\in\mathbb{Z}^d} \widehat{u}_{j-k} \widehat{v}_k|  \\
& \leq & \sum_{k\in\mathbb{Z}^d} | \widehat{u}_{j-k} |  | \widehat{v}_k | =: S_1 + S_2, 
\end{eqnarray*}
where $S_1$ is the part of the series corresponding to $\|k\|\leq\|j\|/2$ (hence, $\|j-k\| \geq \|j\|/2$), 
while $S_2$ corresponds to $\|k\| > \|j\|/2$. Next, we have:
\begin{eqnarray*}
S_1 & \leq & C(u) \sum_{\|k\|\leq\|j\|/2}  (1+\|j-k\|)^{-\alpha}  |\widehat{v}_k|  \\
& \leq & C(u) (1+\|j\|/2)^{-\alpha} \sum_{\|k\|\leq\|j\|/2}  | \widehat{v}_k | \  
\leq \  C(u) \|v\|_W (1+\|j\|/2)^{-\alpha} ,
\end{eqnarray*}
\begin{eqnarray*}
S_2 & \leq & C(v) \sum_{\|k\| > \|j\|/2} | \widehat{u}_{j-k} | (1+\|k\|)^{-\alpha}   \\
& \leq & C(v) (1+\|j\|/2)^{-\alpha} \sum_{\|k\| > \|j\|/2}  | \widehat{u}_{j-k} | \  
\leq \  C(v) \|u\|_W (1+\|j\|/2)^{-\alpha} .
\end{eqnarray*}
Therefore (\ref{eq:Jaffard-alg}) holds for the Fourier coefficients of $uv$, 
with the stated constant. 

(ii) For each $j\in\mathbb{Z}^d$, we have:
\begin{eqnarray*}
|\widehat{(uv)}_j| & \leq & \sum_{k\in\mathbb{Z}^d} | \widehat{u}_{j-k} |  | \widehat{v}_k |  \\
& \leq & C(u) C(v) \sum_{k\in\mathbb{Z}^d} e^{-\beta |j-k|} e^{-\beta |k|} .
\end{eqnarray*}
Since $-|j-k|\leq|k|-|j|$, the last sum is bounded above, for $\beta' \in (0,\beta)$, by 
\begin{eqnarray*}
\sum_{k\in\mathbb{Z}^d} e^{-\beta' |j-k|} e^{-\beta |k|} 
& \leq & \sum_{k\in\mathbb{Z}^d}  e^{-\beta' |j|} e^{-(\beta-\beta') |k|}  \\
& = & e^{-\beta' |j|} \left( \sum_{\nu\in\mathbb{Z}}  e^{-(\beta-\beta') |\nu|} \right)^d
\ = \  e^{-\beta' |j|} M,
\end{eqnarray*}
where 
$M := \left( \frac{1+e^{\beta'-\beta}}{1-e^{\beta'-\beta}} \right)^d = \coth^d ( \frac{\beta-\beta'}{2} )$.
Hence, (\ref{eq:DCP-exp1}) holds for the Fourier coefficients of $uv$, with the rate $\beta'$ and 
the stated constant. 
\end{proof}
\bigskip

The following result expresses the fact that $J_\alpha$ and $\mathcal{E}$ are inverse-closed 
subalgebras, i.e.\ Wiener's Lemma holds for these two algebras.

\begin{lemma}		\label{le:wiener}
\textup{(i)} If $u\in J_\alpha$ and $u$ has no zeros on $T^d$, then $1/u\in J_\alpha$.

\textup{(ii)} If $u\in\mathcal{E}$ and $u$ has no zeros on $T^d$, then $1/u\in\mathcal{E}$.
\end{lemma}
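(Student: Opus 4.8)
The statement asserts that each of the two decay classes $J_\alpha$ and $\mathcal{E}$ is inverse-closed: if $u$ belongs to the class and is non-vanishing on $T^d$, then $1/u$ belongs to the same class. The overall strategy is to reduce both parts to the \emph{same} abstract mechanism, namely the Wiener--L\'evy theorem that gives inverse-closedness once one knows (a) that the class is a Banach (sub)algebra continuously embedded in $W$, and (b) a suitable quantitative submultiplicativity of the class norm, which is exactly what Lemma \ref{le:DCP} supplies. Since $u\in W$ and $u$ is non-vanishing on the compact torus $T^d$, Wiener's lemma already guarantees $1/u\in W$; the entire content of the present lemma is to upgrade membership in $W$ to membership in $J_\alpha$ or $\mathcal{E}$, i.e.\ to propagate the \emph{rate} of decay of the Fourier coefficients through inversion.

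\textbf{Part (i): the algebraic class $J_\alpha$.} First I would equip $J_\alpha$ with the norm $\|u\|_{J_\alpha}:=\|u\|_W+\inf\{C(u): (\ref{eq:Jaffard-alg})\text{ holds}\}$ and check, using Lemma \ref{le:DCP}(i), that this makes $J_\alpha$ a Banach algebra with the constant $2^\alpha$ controlling the cross term; the hypothesis $\alpha>d$ ensures the defining weight is summable, so $J_\alpha\subset W$ with $\|\cdot\|_W\le \|\cdot\|_{J_\alpha}$. The cleanest route to inversion is to invoke Jaffard's inverse-closedness result \cite{sj90}, cited in the excerpt precisely for this purpose, which applies to matrices (equivalently, Laurent symbols) with off-diagonal decay of order $\alpha>d$; since the Laurent matrix $[\widehat{u}_{j-k}]$ has entries bounded by $C(u)(1+\|j-k\|)^{-\alpha}$, its inverse (which exists as a bounded operator by the non-vanishing hypothesis together with Wiener's lemma) inherits the same off-diagonal decay, and that inverse is precisely the Laurent matrix of $1/u$. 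Reading off the first row gives $|\widehat{(1/u)}_k|\le C'(1+\|k\|)^{-\alpha}$, which is $1/u\in J_\alpha$.

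\textbf{Part (ii): the exponential class $\mathcal{E}$.} Here I would use the analyticity/complex-shift argument natural to exponential decay rather than Jaffard's real-variable estimate. If $u\in\mathcal{E}_\alpha$, then the Fourier series $\sum_k \widehat{u}_k\zeta^k$ converges absolutely on the polytorus $\{|\zeta_i|=r_i\}$ for any radii with $e^{-\alpha}<r_i<e^{\alpha}$, so $u$ extends to a function holomorphic on the open annular polydomain $A=\{e^{-\alpha}<|\zeta_i|<e^{\alpha}\}$ and continuous up to the distinguished boundary $T^d$. Since $u$ is non-vanishing on the compact set $T^d$, by continuity it is non-vanishing on a slightly smaller polyannulus $A'=\{e^{-\beta}<|\zeta_i|<e^{\beta}\}$ for some $\beta\in(0,\alpha)$; thus $1/u$ is holomorphic on $A'$ and continuous up to each torus $\{|\zeta_i|=\rho\}$ with $e^{-\beta}<\rho<e^{\beta}$. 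Estimating the Fourier coefficients of $1/u$ by Cauchy's formula on the radii $\rho_i=e^{\mp\beta'}$ (choosing the sign of the exponent to match the sign of each $k_i$) for any $\beta'\in(0,\beta)$ yields $|\widehat{(1/u)}_k|\le C\,e^{-\beta'|k|}$, so $1/u\in\mathcal{E}_{\beta'}\subset\mathcal{E}$.

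\textbf{Main obstacle.} The delicate point in Part (i) is making the passage from $1/u\in W$ to $1/u\in J_\alpha$ rigorous with the correct weight and with $\alpha>d$ used in the right place; the honest work is either to quote Jaffard's theorem at the operator level and translate back to symbols, or—if one wants a self-contained proof consistent with the ``explicit constants'' spirit of the excerpt—to run a Neumann-type series for $1/u$ and control it via the submultiplicativity of Lemma \ref{le:DCP}(i), where the factor $2^\alpha$ per multiplication must be dominated by a geometric gain from $\|\cdot\|_W$, which requires a preliminary normalization (scaling $u$ so that $\|1-u/u(\zeta_0)\|_W<1$ locally) and a partition-of-unity/compactness argument on $T^d$. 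In Part (ii), by contrast, the obstacle is merely bookkeeping: ensuring the coordinatewise choice of radii in the Cauchy estimate is uniform in $k$ and genuinely produces the product bound $e^{-\beta'|k|}=\prod_i e^{-\beta'|k_i|}$, which it does since the $1$-norm $|k|$ separates across coordinates.
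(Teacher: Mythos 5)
Your proposal is correct and follows essentially the same route as the paper: part (i) is handled by quoting Jaffard's inverse-closedness result for matrices with polynomial off-diagonal decay via the symbol--Laurent-matrix correspondence, and part (ii) by extending $u$ holomorphically to an annular polydomain around $T^d$ and applying Cauchy estimates to $1/u$. The only cosmetic difference is in part (ii), where the paper secures non-vanishing of the extension on a neighborhood of $T^d$ via a sectoriality (positive real part) argument, while you use compactness and continuity directly; both are equally valid.
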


%
\begin{proof}
The proof of both parts can be deduced from the more general results of Jaffard \cite{sj90}, 
Propositions~3 and 2, concerning the class of invertible infinite matrices with off-diagonal 
algebraic or exponential decay, using the correspondence between symbols 
$u\in W$ and Laurent matrices $[ \widehat{u}_{j-k}]_{j,k\in \mathbb{Z}^d}$ generated with their 
Fourier coefficients. The proof of part (i), formulated for sequences and their discrete convolution 
product, was given by Bacchelli et al.\ \cite[Lemma~2]{bbrv05}.

For part (ii), Fageot et al.\ \cite[Theorem~13]{fuw19} gave a short alternative proof based on the 
fact that $u\in\mathcal{E}$ if and only if $u$ is real analytic on $T^d$.

Here, we present a different proof based on complex analyticity for the following version of (ii), 
adapted to our setting: if $u\in\mathcal{E}_\alpha$ and 
$u(\zeta)>0$ for all $\zeta\in T^d$, then $1/u\in\mathcal{E}_\beta$, for every $\beta \in (0,\alpha)$. 

Indeed, the exponential decay property (\ref{eq:DCP-exp1}) implies that $u$ can be extended 
to a function defined by the Fourier-Laurent series 
$u ( \zeta ) = \sum_{k\in \mathbb{Z}^d} \widehat{u}_{k} \zeta^{ k }$,
which is analytic in the neighborhood $\mathcal{A}_\alpha^d$ of the unit torus $T^d$, where 
\begin{equation*}
\mathcal{A}_\alpha:=\{z\in\mathbb{C}:e^{-\alpha}<|z|<e^{\alpha}\}.
\end{equation*}
By reducing $\alpha$, if necessary, the continuity of $u$ and its positivity on $T^d$ imply that the 
real part of $u ( \zeta )$ remains positive for all $\zeta\in\mathcal{A}_\alpha^d$, hence $u$ is 
\emph{sectorial} on $\mathcal{A}_\alpha^d$, i.e.\ the convex hull of its range does not contain 
the origin \cite[p.\ 21]{bs98}. Therefore $1/u(\zeta)$ is also analytic in $\mathcal{A}_\alpha^d$.  
In particular, for every $\beta\in(0,\alpha)$, the usual Cauchy estimates imply that the 
Fourier-Laurent coefficients of $1/u$ satisfy the exponential decay condition (\ref{eq:DCP-exp1}), 
with the rate $\beta$ in place of $\alpha$ and with the constant 
$C(1/u):=\max \{ |1/u(\zeta)| : |\zeta_p|=e^{\pm\beta}, p = 1,\ldots,d\}$. Hence, 
$1/u\in\mathcal{E}_\beta$. Note that the constant $C(1/u)$ may become arbitrarily large, 
in general, as $\beta$ approaches $\alpha$.
\end{proof}
\medskip

\noindent
\textbf{Algebraic decay transfer.} 
For the next result, we assume that $\phi$ has algebraic (or polynomial) decay, 
i.e.\ there exist constants $\alpha>d$, $C_0>0$, such that 
\begin{equation}
|\phi ( x )| \leq C_0 (1+ \|x\|)^{-\alpha}, \quad x\in \mathbb{R}^d.
\label{eq:decay-alg}
\end{equation}
Clearly, a kernel $\phi$ with this property also satisfies (\ref{eq:minimal}).

%
\begin{theorem}   \label{th:CI-decay-poly}
If the continuous symmetric kernel $\phi$ satisfies \emph{(\ref{eq:decay-alg})}, with $\alpha>d$,
and the cardinal symbol $\sigma_\phi$ satisfies the positivity condition \emph{(\ref{eq:pos-symbol})},
then both the sequence $a=\{a_k\}_{k\in\mathbb{Z}^d}$ and the associated Lagrange function 
$\chi$ defined by \emph{(\ref{eq:CI-lagrange})} decay with at least the same power as $\phi$, namely, 
there exist positive constants $C_{1,2}=C_{1,2} (\alpha,d,\phi)$, such that:
\begin{equation}
|a_k| \leq C_1 (1+ \|k\|)^{-\alpha}, \quad  k\in \mathbb{Z}^d,
\label{eq:decay-coeff-alg}
\end{equation}
\begin{equation}
|\chi ( x )| \leq C_2 (1+ \|x\|)^{-\alpha}, \quad  x\in \mathbb{R}^d.
\label{eq:decay-Lag-alg}
\end{equation}
\end{theorem}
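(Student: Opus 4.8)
The plan is to establish the two decay bounds separately: (\ref{eq:decay-coeff-alg}) will follow immediately from inverse-closedness of the Jaffard algebra, and (\ref{eq:decay-Lag-alg}) from a convolution estimate that transfers the common rate $\alpha$ from the pair $(a,\phi)$ to their semi-discrete convolution $\chi$.

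First, I would note that, restricted to the lattice points $k\in\mathbb{Z}^d$, the hypothesis (\ref{eq:decay-alg}) states exactly that the Fourier coefficients $\phi(k)$ of the cardinal symbol $\sigma_\phi$ obey the bound (\ref{eq:Jaffard-alg}), so $\sigma_\phi\in J_\alpha$. Since the positivity (\ref{eq:pos-symbol}) forces $\sigma_\phi$ to be zero-free on $T^d$, Lemma~\ref{le:wiener}(i) yields $\omega_\phi=1/\sigma_\phi\in J_\alpha$; by Definition~\ref{def:symbol-class}(i), its Fourier coefficients, which are precisely the $a_k$, then satisfy (\ref{eq:decay-coeff-alg}) with some constant $C_1$. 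This settles the first claim essentially for free, since the analytic content --- inverse-closedness of $J_\alpha$ --- has already been isolated in Lemma~\ref{le:wiener}(i).

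For the second claim, I would estimate $\chi(x)=\sum_k a_k\phi(x-k)$ by the splitting used in the proof of Lemma~\ref{le:DCP}(i), adapted to the present continuous-discrete setting. Fixing $x\in\mathbb{R}^d$, write $|\chi(x)|\le\sum_k|a_k||\phi(x-k)|=:S_1+S_2$, where $S_1$ gathers the terms with $\|k\|\le\|x\|/2$ and $S_2$ the rest. On the near range the reverse triangle inequality gives $\|x-k\|\ge\|x\|/2$, so (\ref{eq:decay-alg}) bounds
\[
S_1\le C_0(1+\|x\|/2)^{-\alpha}\sum_k|a_k|=C_0\|a\|_1(1+\|x\|/2)^{-\alpha}.
\]
On the far range the already-proved bound (\ref{eq:decay-coeff-alg}) gives $|a_k|\le C_1(1+\|x\|/2)^{-\alpha}$, which pulled out of the sum leaves
\[
S_2\le C_1(1+\|x\|/2)^{-\alpha}\sum_k|\phi(x-k)|\le C_1|\phi|_\infty(1+\|x\|/2)^{-\alpha},
\]
the last step using the periodicity bound $\sum_k|\phi(x-k)|\le|\phi|_\infty$ from (\ref{eq:minimal}).

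Combining the two estimates and replacing $(1+\|x\|/2)^{-\alpha}$ by $2^\alpha(1+\|x\|)^{-\alpha}$ (via $1+\|x\|/2\ge\tfrac12(1+\|x\|)$) would give (\ref{eq:decay-Lag-alg}) with $C_2:=2^\alpha(C_0\|a\|_1+C_1|\phi|_\infty)$. I do not expect a genuine obstacle here: the difficulty is packaged into Lemma~\ref{le:wiener}(i), and what remains is routine. The one point requiring care is that $\chi$ mixes a discrete sequence with a continuous kernel, so the far-range sum must be controlled by the periodized quantity $|\phi|_\infty$ --- finite because (\ref{eq:decay-alg}) implies (\ref{eq:minimal}) --- rather than by a Wiener-algebra norm, as in the purely discrete Lemma~\ref{le:DCP}(i).
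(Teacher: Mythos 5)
Your proposal is correct and follows essentially the same route as the paper: part one is the same appeal to the inverse-closedness of $J_\alpha$ (Lemma~\ref{le:wiener}(i)), and part two is the same splitting of the semi-discrete convolution at $\|k\|\leq\|x\|/2$, with only a cosmetic difference in which factor is pulled out of each partial sum (you bound the tails by $\|a\|_1$ and $|\phi|_\infty$, the paper by the corresponding convergent sums of $(1+\|\cdot\|)^{-\alpha}$). No gaps.
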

%
\begin{proof}
Condition (\ref{eq:decay-alg}) ensures $\sigma\in J_{\alpha}$, therefore (\ref{eq:pos-symbol}) 
and Lemma~\ref{le:wiener}(i) imply that the sequence $a=\{a_k\}_{k\in\mathbb{Z}^d}$ of 
Fourier coefficients of $1/\sigma_\phi$ satisfies (\ref{eq:decay-coeff-alg}).

The decay (\ref{eq:decay-Lag-alg}) of the Lagrange function follows from the more general result of 
Bacchelli et al.\ \cite[Lemma~3]{bbrv05} on semi-discrete convolution estimates; nevertheless, we 
include the short proof, for completeness. For $x\in\mathbb{R}^d$, a direct estimate of 
(\ref{eq:CI-lagrange}), using (\ref{eq:decay-alg}) and (\ref{eq:decay-coeff-alg}), implies 
\begin{eqnarray}
|\chi (x)| 
& \leq & \sum_{k\in \mathbb{Z}^d} |a_k|\, |\phi (x-k)|  \nonumber \\
& \leq & C_1 C_0 \sum_{k\in \mathbb{Z}^d} (1+ \|k\|)^{-\alpha}(1+ \|x-k\|)^{-\alpha} .
\label{eq:CI-RHS-poly}
\end{eqnarray}
As in the proof of Lemma~\ref{le:DCP}(i), the last series is split in two parts, the first one 
corresponding to all $k$ such that $\|k\| \leq \|x\| / 2$. For such indices, we have 
$\|x-k\| \geq \|x\| / 2$, hence, for some constant $C(\alpha,d)$,
\begin{eqnarray*}
\lefteqn{ \sum_{\|k\| \leq \|x\| / 2}  (1+ \|k\|)^{-\alpha} (1+ \|x-k\|)^{-\alpha} } \\
& \leq & \sum_{\|k\| \leq \|x\| / 2}  (1+ \|k\|)^{-\alpha}(1+ \|x\|/2)^{-\alpha}  
\ \leq\  C(\alpha,d)  (1+ \|x\|)^{-\alpha} .
\end{eqnarray*}
The second part of the sum (\ref{eq:CI-RHS-poly}), for indices $k$ with $\|k\| > \|x\| / 2$, 
is bounded above by
\begin{equation*}
(1+ \|x\| / 2)^{-\alpha} \sum_{\|k\| > \|x\| / 2}  (1+ \|x-k\|)^{-\alpha}.
\end{equation*}
In turn, the last sum admits the upper bound 
\begin{equation*}
\sup_{x\in\mathbb{R}^d} \sum_{k\in \mathbb{Z}^d}  (1+ \|x-k\|)^{-\alpha} \ 
=\  \max_{x\in [0,1]^d} \sum_{k\in \mathbb{Z}^d}  (1+ \|x-k\|)^{-\alpha},
\end{equation*}
which is a finite constant depending only on $\alpha$ and $d$. 
This proves (\ref{eq:decay-Lag-alg}).
\end{proof}
\medskip

\noindent
\textbf{Remarks.} 
(i) Theorem~\ref{th:CI-decay-poly} can be adapted to the case of anisotropic decay with 
separate algebraic rates in each coordinate, using the inverse-closedness result proved 
by Gr\"{o}chenig and Klotz \cite[Theorem~1.1]{gk10} for the corresponding subalgebra of matrices 
with anisotropic off-diagonal decay.

(ii) The transfer of the weighted $\ell_1$-condition (\ref{eq:weighted-decay})  
to the associated Lagrange function $\chi$, which implies the finiteness of a weighted Lebesgue 
constant, was proved by Fageot et al.\ \cite[\S7.1]{fuw19}  
via a weighted version of Wiener's lemma (see Gr\"{o}chenig \cite[Theorem~5.24]{gro10}). 
\medskip

\noindent
\textbf{Exponential decay transfer.} 
Next, we consider the case of an exponentially decaying $\phi$, i.e.\  
we assume there exists a constant $C_0>0$ and a rate $\alpha>0$, such that 
\begin{equation}
|\phi ( x )| \leq C_0\, e^{-\alpha |x|}, \quad  x\in \mathbb{R}^d.
\label{eq:decay-exp}
\end{equation}
The use of the 1-norm of $x$ in this condition is simply an adaptation to the 
multiplicative properties of the exponential. 

%
\begin{theorem}   \label{th:CI-decay-exp}
If the continuous symmetric kernel $\phi$ satisfies \emph{(\ref{eq:decay-exp})} 
and the associated cardinal symbol $\sigma_\phi$ satisfies \emph{(\ref{eq:pos-symbol})}, 
then there exist constants $\beta\in(0,\alpha)$, $C_1=C_1(\phi,\beta,d)$, 
$C_2=C_2(\phi,\beta,\alpha,d)$, such that:
\begin{equation}
|a_k| \leq C_1\, e^{-\beta |k|}, \quad  k\in \mathbb{Z}^d,
\label{eq:decay-coeff-exp}
\end{equation}
\begin{equation}
\vert \chi( x ) \vert \leq C_2\, e^{-\beta |x|}, \quad  x\in \mathbb{R}^d.
\label{eq:decay-Lag-exp}
\end{equation}
In this case, the cardinal symbol $\sigma_\phi$ can be extended to a function of $d$ 
complex variables, which is analytic and sectorial (see the proof of 
Lemma~\ref{le:wiener}) in a neigh\-borhood of $T^d$.
\end{theorem}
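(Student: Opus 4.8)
The plan is to follow the blueprint of Theorem~\ref{th:CI-decay-poly}, replacing the Jaffard class $J_\alpha$ by the exponential class $\mathcal{E}_\alpha$ throughout. First I would observe that the Fourier coefficients of the cardinal symbol are precisely the kernel samples, $\widehat{(\sigma_\phi)}_k = \phi(k)$, so that the exponential decay hypothesis (\ref{eq:decay-exp}) immediately gives $\sigma_\phi \in \mathcal{E}_\alpha$. This single observation also settles the closing assertion of the theorem: as shown in the proof of Lemma~\ref{le:wiener}(ii), membership in $\mathcal{E}_\alpha$ forces the Fourier--Laurent series $\sum_k \phi(k)\zeta^k$ to converge and define an analytic extension of $\sigma_\phi$ on the polyannulus $\mathcal{A}_\alpha^d$, and, after reducing $\alpha$ if necessary, the continuity and positivity of $\sigma_\phi$ on $T^d$ render this extension sectorial there.

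Next I would invoke the exponential Wiener lemma. Applying the specific version established inside the proof of Lemma~\ref{le:wiener}(ii) --- namely, that $u \in \mathcal{E}_\alpha$ with $u > 0$ on $T^d$ implies $1/u \in \mathcal{E}_\beta$ for every $\beta \in (0,\alpha)$ --- to $u = \sigma_\phi$ yields $1/\sigma_\phi \in \mathcal{E}_\beta$. Fixing any such $\beta$, this is exactly the coefficient estimate (\ref{eq:decay-coeff-exp}), with $C_1$ the constant furnished by that lemma.

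The remaining step transfers the exponential decay from $a$ and $\phi$ to $\chi$ via the representation (\ref{eq:CI-lagrange}). A direct estimate gives $|\chi(x)| \leq \sum_k |a_k|\,|\phi(x-k)| \leq C_0 C_1 \sum_k e^{-\beta|k|} e^{-\alpha|x-k|}$, so the task reduces to bounding this semi-discrete convolution of two exponentials with \emph{different} rates. Since the $1$-norm is additive across coordinates, both exponentials factor as products over the $d$ coordinates and the sum splits as a $d$-fold product of one-dimensional sums $\sum_{n\in\mathbb{Z}} e^{-\beta|n|} e^{-\alpha|t-n|}$. I would bound each such univariate sum by $C(\alpha,\beta)\,e^{-\beta|t|}$: assuming $t\ge 0$ without loss of generality and factoring out $e^{-\beta t}$, the exponent $\beta t - \beta|n| - \alpha|t-n|$ is $\le 0$ on each of the ranges $n\ge t$, $0\le n\le t$, and $n\le 0$, while in each range the terms form a geometric series with ratio bounded away from $1$ uniformly in $t$. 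Taking the $d$-fold product then gives (\ref{eq:decay-Lag-exp}) with $C_2 = C_0 C_1 C(\alpha,\beta)^d$.

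The one point demanding care --- and the reason the genuine rate $\alpha$ of $\phi$ must be retained rather than crudely replaced by $\beta$ --- is the preservation of the \emph{exact} exponent $\beta$ in the convolution. On the middle range $0\le n\le t$ the exponent equals $(\alpha-\beta)(n-t)$, which is harmlessly negative precisely because $\alpha > \beta$. Had one instead bounded $\phi$ by $e^{-\beta|x-k|}$ and convolved two equal-rate exponentials, this middle exponent would vanish identically, producing a spurious polynomial factor $|t|$ and forcing a further strict reduction of the rate, exactly as in Lemma~\ref{le:DCP}(ii). Exploiting the strict inequality $\beta < \alpha$ throughout is therefore what allows the conclusion to be stated with the same $\beta$ in (\ref{eq:decay-coeff-exp}) and (\ref{eq:decay-Lag-exp}).
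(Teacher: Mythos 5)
Your proposal is correct and follows essentially the same route as the paper: deduce $\sigma_\phi\in\mathcal{E}_\alpha$ from the samples $\phi(k)$, apply the analyticity-based Wiener lemma from the proof of Lemma~\ref{le:wiener}(ii) to get (\ref{eq:decay-coeff-exp}), and then estimate the semi-discrete convolution coordinate-wise. Your three-range bound of the univariate sum $\sum_{n}e^{-\beta|n|}e^{-\alpha|t-n|}$ is just a reorganization of the paper's use of the triangle inequality $-|k_p|\leq|x_p-k_p|-|x_p|$ followed by summing $\sum_{k_p}e^{-(\alpha-\beta)|x_p-k_p|}$, and your observation that the strict inequality $\beta<\alpha$ is what preserves the rate $\beta$ matches the paper's argument exactly.
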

%
\begin{proof}
Since $\{ \phi(k) \}_{k\in\mathbb{Z}^d}$ is the sequence of Fourier coefficients of $\sigma_\phi$, 
we have $\sigma_\phi\in\mathcal{E}_\alpha$. Hence, by the proof of Lemma~\ref{le:wiener}(ii), 
the Fourier coefficients $\{ a_k \}_{k\in\mathbb{Z}^d}$ of $1/\sigma_\phi$ satisfy 
(\ref{eq:decay-coeff-exp}), for $\beta := \alpha/2$, say.

To establish the exponential decay (\ref{eq:decay-Lag-exp}) of the Lagrange function, we use 
(\ref{eq:CI-lagrange}), (\ref{eq:decay-exp}), and (\ref{eq:decay-coeff-exp}), which imply:
\begin{eqnarray*}
|\chi (x)| & \leq & \sum_{k\in \mathbb{Z}^d} |a_k|\, |\phi (x-k)|  \\
& \leq & C_1 C_0 \sum_{k\in \mathbb{Z}^d}  \prod_{p=1}^d e^{-\beta |k_p|} e^{-\alpha |x_p - k_p|} \\
& = & C_1 C_0 \prod_{p=1}^d  \sum_{k_p\in \mathbb{Z}}  e^{-\beta |k_p|} e^{-\alpha |x_p - k_p|} ,
\quad  x\in\mathbb{R}^d.
\end{eqnarray*}
The sum over $k_p$ is estimated as in the proof of Lemma~\ref{le:DCP}(ii), using 
the triangle inequality $-|k_p|\leq |x_p-k_p|-|x_p|$:
\begin{equation*}
\sum_{k_p\in \mathbb{Z}} e^{-\beta |k_p|} e^{-\alpha |x_p - k_p|} 
\leq e^{-\beta |x_p|} \sum_{k_p\in \mathbb{Z}}  e^{-(\alpha-\beta) |x_p - k_p|} ,
\end{equation*}
and noting that the last sum admits the upper bound
\begin{equation*}
\max_{x_p\in[0,1]}  \left( e^{(\beta-\alpha) x_p} + e^{(\beta-\alpha) (1-x_p)} \right)
\sum_{\nu=0}^\infty  e^{(\beta-\alpha) \nu} = \frac{1+e^{\beta-\alpha}}{1-e^{\beta-\alpha}} =: K.
\end{equation*}
The conclusion follows, with $C_2 := C_1 C_0 K^d$.
\end{proof}
\bigskip

\noindent
\textbf{Remark.} 
Although (\ref{eq:decay-coeff-exp}) can be stated for any $\beta\in(0,\alpha)$, 
the constant $C_1$ may be arbitrarily large,  in general, as $\beta$ approaches $\alpha$.
%

\section{Interpolation on a half-space lattice}
\addtocounter{equation}{-18}

\textbf{Half-space lattices.}
In the sequel, the expression `half-space lattice' will be used to denote a subset $H$ of 
$\mathbb{Z}^d$ belonging to one of two classes. 

The first class is made of the following $d$ lattice sets:
\begin{equation*}
\{ (j_1,\ldots,j_d)\in\mathbb{Z}^d : j_p \geq 0 \},\quad p\in\{1,\ldots,d\}.
\end{equation*}
Since the results obtained are formulated similarly for any of these $d$ lattices,  
we will just consider $H = \mathbb{Z}^{d-1}\!\!\times\!\mathbb{Z}_+$ 
(for $p=d$) as a generic representative of this class of half-space lattices.

To describe a member of the second class, we assume that $\mathbb{Z}^d$ is endowed 
with a linear (or total) order relation $\preceq$ compatible with addition, in the sense that 
$j+l\preceq k+l$ whenever $j,k,l \in \mathbb{Z}^d$ and $j\preceq k$. We then let
\begin{equation*}
H = \mathbb{Z}^{d}_{\preceq,+} := \{ j\in\mathbb{Z}^d \colon 0\preceq j \}, 
\end{equation*}
the set of $\preceq$-nonnegative $d$-dimensional multi-integers. In this case, $H$ 
satisfies the following properties:
\medskip

\noindent
(a) $H+H\subset H$ (\emph{i.e.}\ $H$ is a semigroup);

\noindent
(b) $H\cap(-H) = \{0\}$;

\noindent
(c) $H\cup(-H) = \mathbb{Z}^d$.
\medskip

\noindent
Conversely, any set $H\subset\mathbb{Z}^d$ with these three properties induces 
a linear order $\preceq$ on $\mathbb{Z}^d$ compatible with addition, by simply defining 
$j\preceq k$ to mean that $k-j\in H$ (see Rudin \cite[Chapter~8]{rudinFAG}).

Note that, if $d=1$, both types of half-space lattices reduce to $H = \mathbb{Z}_+$, 
while, if $d>1$, any lattice from the first class satisfies (a) and (c), but not (b). 
When $d>1$, the second class of lattices is infinite. The following two examples 
of order relations on $\mathbb{Z}^2$ have been used by 
Goodman et al.\ \cite{gmrs00} in the context of Gram-Schmidt  
orthonormalization of the shifts of an integrable kernel:
\begin{description}
\item[(i)] 
The usual lexicographical order on $\mathbb{Z}^2$: $j\preceq k$ if either $j_1<k_1$, 
or $j_1 = k_1$ and $j_2\leq k_2$;

\item[(ii)]
A different lexicographical order on $\mathbb{Z}^2$, for which $j\preceq k$ if either 
$j_1+j_2<k_1+k_2$, or $j_1+j_2 = k_1+k_2$ and $j_2\leq k_2$.
\end{description}

\noindent
\textbf{Semi-cardinal interpolation.}
Let $H\subset\mathbb{Z}^d$ be a half-space lattice as above and suppose that the 
kernel $\phi$ satisfies the hypotheses of Theorem~\ref{th:E&U-CI}. As described in the Introduction, 
this section studies \emph{semi-cardinal interpolation} on $H$ with the kernel $\phi$, 
which is the problem of finding a bounded sequence of coefficients $c=\{c_k\}_{k\in H}$, such that 
the function $s$ of the form (\ref{eq:SCI-repres}) satisfies the interpolation conditions
\begin{equation}
s ( j ) =y_{j},\quad  j\in H,
\label{eq:SCI-problem}
\end{equation}
for a given bounded sequence of real values $y=\{y_j\}_{j\in H}$. 
The solution, constructed in \S3.2, will be expressed by means of the kernel expansion coefficients 
of Lagrange functions $\chi_j$, $j\in H$, defined by 
\begin{equation}
\chi_{j} ( x ) = \sum_{k\in H} a_{k,j}\, \phi (x-k),
\quad x\in \mathbb{R}^d,  
\label{eq:SCI-lagrange}
\end{equation}
which satisfy the interpolation conditions
\begin{equation}
\chi_{j} ( l ) =\delta _{j,l} = \left\{ 
\begin{array}{rr}
1, & l=j, \\ 
0, & l\in H \setminus \{j\}.
\end{array}
\right.
\label{eq:SCI-delta}
\end{equation}
This is equivalent to finding, for each $j\in H$, the sequence $\{a_{k,j}\}_{k\in H}$ 
satisfying the Wiener-Hopf (or semi-infinite Toeplitz) system of difference equations
\begin{equation}
\sum_{k\in H} a_{k,j} \phi (l-k) = \delta_{j,l}, \quad l\in H.
\label{eq:SCI-system-Lag}
\end{equation}

In \S3.1, the explicit solution of this system is obtained using the 
Wiener-Hopf factorization of the cardinal symbol $\sigma_\phi$ (equivalently, of its reciprocal) 
generated by $\phi$. This method is due, for $d=1$, to Krein \cite{mgk58} and
Calderón et al.\ \cite{csw59}. For $d>1$, the case 
$H = \mathbb{Z}^{d-1}\!\!\times\!\mathbb{Z}_+$ was treated by Goldenstein 
and Gohberg \cite{gg60} and Goldenstein \cite{lsg64}. In the case 
$H = \mathbb{Z}^{d}_{\preceq,+}$, the Wiener-Hopf factorization technique was studied by 
Goodman et al.\ \cite{gmrs00} for obtaining Cholesky factorizations of bi-infinite 
Gram-Laurent matrices generated by shifts of multivariate kernels, while the application of 
this technique to the solution of more general multi-index semi-infinite block Toeplitz systems  
was considered in van der Mee et al.\ \cite[p.\ 467]{vsr03}.

The transfer of specific algebraic or exponential decay of the kernel $\phi$ to the 
family of Lagrange functions $\chi_j$, $j\in H$, is obtained in \S3.3.
\medskip

\noindent
\textbf{Notation remarks.}
For simplicity, in this section we remove the dependence on $\phi$ in most notation, e.g.\ we will 
use $\sigma:=\sigma_\phi$; nevertheless, this dependence remains implicit throughout the section. 
Also, for a $d$-dimensional vector $x$, we employ the partition notation $x=(x',x_d)$.

\subsection{The Wiener-Hopf factorization}

The construction of the Wiener-Hopf (also known as `spectral') factorization of 
$\omega=1/\sigma\in W$ is based on the fact that, under the hypotheses of 
Theorem~\ref{th:E&U-CI}, $\log\omega$ also belongs to the Wiener algebra. Indeed, this is 
ensured by the following multivariable version of L\'{e}vy's extension of Wiener's lemma, 
which is a special case (for the trivial weight) of a result proved by Goodman 
et.\ al.\ \cite[Theorem~2.3]{gmrs00} for certain weighted Wiener algebras.
%
\begin{lemma}[Wiener-L\'{e}vy]
\label{le:wiener-levy1}
If $\psi\in W$ and $F$ is analytic in a neighborhood of the range of $\psi$, then $F\circ\psi\in W$. 
\end{lemma}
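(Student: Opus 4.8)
The plan is to realize $F\circ\psi$ through the holomorphic functional calculus of the Banach algebra $W$, making the construction explicit via a Cauchy integral of resolvents. First I would observe that $K:=\psi(T^d)$ is a compact subset of $\mathbb{C}$, being the continuous image of the compact torus $T^d$, and that by hypothesis $F$ is analytic on some open set $U\supset K$. Standard complex analysis then supplies a finite cycle $\Gamma\subset U\setminus K$ (a formal sum of smooth closed curves) that is null-homologous in $U$ and has winding number $1$ about every point of $K$, so that Cauchy's formula gives, for each fixed $\zeta\in T^d$,
\begin{equation*}
F(\psi(\zeta)) = \frac{1}{2\pi i}\oint_\Gamma \frac{F(\lambda)}{\lambda-\psi(\zeta)}\,d\lambda .
\end{equation*}

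The crucial point, where Wiener's lemma enters, is the following. For each $\lambda\in\Gamma$ we have $\lambda\notin K$, hence $\lambda-\psi(\zeta)\neq 0$ for all $\zeta\in T^d$; since the constant $\lambda$ and $\psi$ both lie in $W$, the element $\lambda-\psi\in W$ has no zeros on $T^d$, and Wiener's lemma yields $g_\lambda:=1/(\lambda-\psi)\in W$. Thus the integrand above equals $F(\lambda)\,g_\lambda(\zeta)$ with each $g_\lambda\in W$, and the entire task reduces to showing that the $W$-valued integral $\frac{1}{2\pi i}\oint_\Gamma F(\lambda)\,g_\lambda\,d\lambda$ exists as an element of $W$ and has the correct pointwise values.

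To this end I would invoke the two standard Banach-algebra facts that the group of invertible elements of $W$ is open and that inversion is continuous on it. Since $\lambda\mapsto\lambda-\psi$ is a (Lipschitz) continuous map from $\mathbb{C}$ into $W$ whose image over $\Gamma$ consists of invertible elements, the composition $\lambda\mapsto g_\lambda$ is continuous from $\Gamma$ into $(W,\|\cdot\|_W)$; in particular $\|g_\lambda\|_W$ is bounded on the compact set $\Gamma$. Hence $\lambda\mapsto F(\lambda)\,g_\lambda$ is a continuous $W$-valued function on $\Gamma$, so its Riemann sums converge in $W$-norm (by completeness of $W$) to an element $h\in W$. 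Finally, because pointwise evaluation $u\mapsto u(\zeta)$ is a bounded linear functional on $W$ (indeed $|u(\zeta)|\le\|u\|_W$), it commutes with the $W$-valued integral, giving $h(\zeta)=\frac{1}{2\pi i}\oint_\Gamma F(\lambda)g_\lambda(\zeta)\,d\lambda=F(\psi(\zeta))$ for every $\zeta\in T^d$. Therefore $F\circ\psi=h\in W$.

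The main obstacle is the uniform control of the resolvent: one must know not only that each $g_\lambda=(\lambda-\psi)^{-1}$ lies in $W$ — which is exactly Wiener's lemma applied pointwise along $\Gamma$ — but that $\lambda\mapsto g_\lambda$ is norm-continuous, hence norm-bounded, on $\Gamma$, so that the contour integral can be formed in the Banach space $W$. Everything else (the choice of cycle, the interchange of evaluation with integration) is routine. Conceptually, this is precisely the holomorphic functional calculus: Wiener's lemma identifies the spectrum of $\psi$ in $W$ with its range $\psi(T^d)$, and the injectivity of the Gelfand transform on $W$ identifies the functional-calculus element $F(\psi)$ with the composition $F\circ\psi$.
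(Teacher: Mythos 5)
Your proof is correct and complete. The paper itself does not prove Lemma 3.1 at all: it simply cites it as the trivial-weight special case of Goodman et al.\ \cite[Theorem~2.3]{gmrs00}, which establishes the Wiener--L\'{e}vy property for certain weighted Wiener algebras. What you supply is the classical self-contained argument via the holomorphic functional calculus in the unital commutative Banach algebra $W(T^d)$: the cycle $\Gamma$ with winding number $1$ about the compact range $K=\psi(T^d)$ exists by the standard homology form of Cauchy's theorem; Wiener's lemma (which the paper already invokes, citing Rudin, in its multivariate form) gives $(\lambda-\psi)^{-1}\in W$ for each $\lambda\in\Gamma$; openness of the group of invertibles and continuity of inversion give the norm-continuity, hence boundedness, of $\lambda\mapsto g_\lambda$ on $\Gamma$, so the $W$-valued Riemann integral converges; and since evaluation at $\zeta\in T^d$ is a contractive linear functional on $W$, it passes through the integral to identify the result pointwise with $F\circ\psi$. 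All of these steps are sound, and you correctly flag the one genuinely nontrivial ingredient (uniform control of the resolvent along $\Gamma$). The trade-off is that your route is elementary and explicit but limited to the unweighted algebra, whereas the cited result of Goodman et al.\ covers weighted Wiener algebras in one stroke; for the purposes of this paper only the trivial weight is needed, so your argument would serve as a legitimate replacement for the citation.
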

%
Since $\omega=1/\sigma$ is positive on $T^d$ due to (\ref{eq:pos-symbol}), letting $\psi:=\omega$ 
and $F:=\log$, it follows that $\log\omega\in W$, with an absolutely convergent Fourier expansion:
\begin{equation}
\log \omega ( \zeta ) =\sum_{k\in \mathbb{Z}^d} \lambda _{k} \zeta^{k},\quad \zeta\in T^d.
\label{eq:log-omega}
\end{equation}
Note that $\lambda_{-k}=\lambda _{k}\in\mathbb{R}$, $\forall k\in \mathbb{Z}^d$, since the Fourier 
coefficients of $\omega$ are also real-valued and symmetric. The required Wiener-Hopf factorization 
of $\omega$ is now a consequence of the decomposition 
\begin{equation*}
\log \omega ( \zeta ) = \Lambda_+(\zeta) + \Lambda_+(\zeta^{-1}),
\end{equation*}
which is obtained by selecting the function $\Lambda_+$ in the following way:

If $H = \mathbb{Z}^{d-1}\!\!\times\!\mathbb{Z}_+$, we set, as in Goldenstein and 
Gohberg \cite{gg60},
\begin{equation}
\Lambda _{+} (\zeta)  :=  \frac{1}{2} \sum_{k'\in\mathbb{Z}^{d-1}} \lambda _{(k',0)} (\zeta')^{k'} 
+ \sum_{k_d=1}^\infty \sum_{k'\in\mathbb{Z}^{d-1}} \lambda _{k} \zeta^{k} ,\quad \zeta\in T^d.
\label{eq:lambda+} 
\end{equation}

If $H = \mathbb{Z}^{d}_{\preceq,+}$, we define, as in Goodman et al.\ \cite{gmrs00},
\begin{equation}
\Lambda _{+} (\zeta)  :=  \frac{1}{2} \lambda _0 
+ \sum_{k\succ 0} \lambda _{k} \zeta^{k} ,\quad \zeta\in T^d.
\label{eq:lambda++} 
\end{equation}
In both cases, letting
\begin{equation}
\omega _{+} (\zeta)  :=  \exp [ \Lambda _{+} (\zeta) ] ,
\label{eq:omega+}
\end{equation}
leads to the following Wiener-Hopf (or canonical) factorization:
\begin{equation}
\omega (\zeta) = e^{\log \omega ( \zeta )} = e^{\Lambda_+(\zeta) + \Lambda_+(\zeta^{-1})} 
= \omega_+ (\zeta) \omega_+ (\zeta^{-1}), \quad \zeta\in T^d.
\label{eq:wh}
\end{equation}
Invoking again the Wiener-L\'{e}vy lemma, this time for the composite function (\ref{eq:omega+}), 
it follows that $\omega_+\in W$. Moreover, using the power series expansion of the exponential 
and the fact that $H$ is closed under addition, we deduce that the Fourier coefficients of 
$\omega_+$, as those of $\Lambda_+$, are supported on $H$:
\begin{equation}
\omega _{+} (\zeta) = \sum_{k\in H} \gamma_{k} \zeta^{k},
\quad \zeta\in T^d.
\label{eq:om+Fourier}
\end{equation}
The same arguments show that $\omega_+^{-1}=\exp(-\Lambda_+)\in W$ and the Fourier coefficients 
of $\omega_+^{-1}$ are supported on $H$, as well.
\medskip

\noindent
\textbf{Remark.} The existence of a multi-index Wiener-Hopf factorization of a non-vanishing symbol 
$\omega$ is obtained in the literature under more general conditions than positivity, expressed in terms 
of the winding number $\textup{wind}_p (\omega)$ of $\omega$ with respect to $\zeta_p$ about the origin. 
Specifically, if $H = \mathbb{Z}^{d-1}\!\!\times\!\mathbb{Z}_+$, then $\textup{wind}_d (\omega) = 0$ is 
required (Goldenstein and Gohberg \cite{gg60}), while, if $H = \mathbb{Z}^{d}_{\preceq,+}$, the extra 
condition is $\textup{wind}_p (\omega) = 0$, for all $p = 1,\ldots,d$, which is seen to be independent 
of the underlying linear order (Ehrhardt and van der Mee \cite{ev03}).
\medskip

\noindent
\textbf{Semi-cardinal symbols.}
Let $P_+$ denote the truncation operator defined by
\begin{equation*}
P_+ \left\{ \sum_{k\in\mathbb{Z}^{d}} \widehat{\psi}_{k} \zeta^{k} \right\}
= \sum_{k\in H} \widehat{\psi}_{k} \zeta^{k},
\quad \zeta\in T^d,
\end{equation*}
for any $\psi\in W$ with Fourier coefficients $\{\hat{\psi}_k\}_{k\in\mathbb{Z}^d}$.

With the factorization (\ref{eq:wh}) in hand, we now follow Krein's method  \cite{mgk58} 
(for $d=1$) to express, for each $j\in H$, the solution $\{a_{k,j}\}_{k\in H}$ of the Wiener-Hopf system 
(\ref{eq:SCI-system}) as the sequence of Fourier coefficients of the symbol:
\begin{equation}
\omega _{j} (\zeta) := \omega _{+} (\zeta) P_{+}\left\{ \zeta^{j} \omega _{+} (\zeta^{-1}) \right\}, 
\quad \zeta\in T^d.
\label{eq:sc-symbol}  
\end{equation}
The second factor can be seen as a twisted truncation of the symbol $\omega _{+} (\zeta^{-1})$, 
which appears in the Wiener-Hopf factorization (\ref{eq:wh}). 
Indeed, if $H = \mathbb{Z}^{d}_{\preceq,+}$, then for each $j\succeq 0$, we have
\begin{equation}
P_+ \{ \zeta^j \omega_+ (\zeta^{-1}) \} = \zeta^j \omega_{-}^{(j)} (\zeta),
\label{eq:twisted-trunc}
\end{equation}
where $\omega_{-}^{(j)}$ is the truncation of the symbol $\omega_+ (\zeta^{-1})$ defined by
\begin{equation}
\omega_{-}^{(j)} (\zeta) := \sum_{0\preceq l\preceq j} \gamma_l \zeta^{-l}.
\label{eq:om+truncation}
\end{equation}
Note that the last sum can have an infinite number of terms when $d>1$. More importantly, 
the following uniform bound holds:
\begin{equation} 
\| \omega_{-}^{(j)} \|_W \leq \| \omega_{+} \|_W, \quad \forall j\in H.
\label{eq:om+tr-bound}
\end{equation}

Also, if $H = \mathbb{Z}^{d-1}\!\!\times\!\mathbb{Z}_+$, replacing (\ref{eq:om+truncation}) 
by the truncation with respect to the last coordinate:
\begin{equation}
\omega_{-}^{(j_d)} (\zeta) := \sum_{l_d = 0} ^{j_d} \sum_{l'\in \mathbb{Z}^{d-1}} \gamma_l \zeta^{-l},
\label{eq:om+trunc}
\end{equation}
it follows that (\ref{eq:twisted-trunc}) and (\ref{eq:om+tr-bound}) hold with $\omega_{-}^{(j_d)}$ 
in place of $\omega_{-}^{(j)}$.

Clearly, for both choices of $H$, we have $\omega_j \in W$ and the Fourier coefficients of 
$\omega_j$ are supported on $H$, since both factors of (\ref{eq:sc-symbol}) possess this 
property. Hence, $\omega_j$ admits an absolutely convergent Fourier representation:
\begin{equation}
\omega _{j} (\zeta) = \sum_{k\in H} a_{k,j}\, \zeta^{k}, \quad \zeta\in T^d.
\label{eq:sc-symb-F}
\end{equation}
However, it may not be immediately obvious that $\{a_{k,j}\}_{k\in H}$ is a solution of 
(\ref{eq:SCI-system-Lag}). This claim is settled next.
%
\begin{proposition}    \label{prop:sc-Lagrange}
For each $j\in H$, the sequence $\{a_{k,j}\}_{k\in H}$ of Fourier coefficients of $\omega_j$ 
satisfies the semi-infinite system \textup{(\ref{eq:SCI-system-Lag})}. 
\end{proposition}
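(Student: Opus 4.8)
The plan is to work entirely at the level of symbols in the Wiener algebra $W$ and to recognize the left-hand side of (\ref{eq:SCI-system-Lag}) as a single Fourier coefficient of the product $\sigma\,\omega_j$. Since $\widehat{\sigma}_k = \phi(k)$ and, by (\ref{eq:sc-symb-F}), $\widehat{(\omega_j)}_k = a_{k,j}$ is supported on $H$, the convolution rule for Fourier coefficients in $W$ gives, for every $l\in\mathbb{Z}^d$,
\[
\widehat{(\sigma\,\omega_j)}_l = \sum_{k\in H} a_{k,j}\,\phi(l-k),
\]
the interchange of summation being justified by absolute convergence, as both $\sigma$ and $\omega_j$ lie in $W$. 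Consequently it suffices to prove that $P_+\{\sigma\,\omega_j\} = \zeta^j$: this yields $\widehat{(\sigma\,\omega_j)}_l = \delta_{j,l}$ for every $l\in H$, which is precisely (\ref{eq:SCI-system-Lag}).

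Next I would use the Wiener-Hopf factorization to simplify $\sigma\,\omega_j$. Writing $\sigma = \omega^{-1} = \omega_+(\zeta)^{-1}\,\omega_+(\zeta^{-1})^{-1}$ from (\ref{eq:wh}), where both factors are invertible in $W$ because $\omega_+ = \exp\Lambda_+$ and $\omega_+^{-1} = \exp(-\Lambda_+)$ both belong to $W$, and inserting the definition (\ref{eq:sc-symbol}) of $\omega_j$, the factor $\omega_+(\zeta)$ cancels against $\omega_+(\zeta)^{-1}$, leaving
\[
\sigma\,\omega_j = \omega_+(\zeta^{-1})^{-1}\,P_+\bigl\{\zeta^j\,\omega_+(\zeta^{-1})\bigr\}.
\]
Introducing the complementary truncation $Q := I - P_+$, which retains the Fourier coefficients indexed by $\mathbb{Z}^d\setminus H$, and writing $P_+\{\zeta^j g\} = \zeta^j g - Q\{\zeta^j g\}$ with $g := \omega_+(\zeta^{-1})$, this collapses to
\[
\sigma\,\omega_j = \zeta^j - \omega_+(\zeta^{-1})^{-1}\,Q\bigl\{\zeta^j\,\omega_+(\zeta^{-1})\bigr\}.
\]

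The crux, and the step I expect to be the main obstacle, is to show that the error term $\omega_+(\zeta^{-1})^{-1}\,Q\{\zeta^j\,\omega_+(\zeta^{-1})\}$ has all its Fourier coefficients supported off $H$, so that $P_+$ annihilates it; this is purely a matter of tracking supports. Since $\omega_+^{-1}$ is supported on $H$ by the construction in \S3.1, the symbol $\omega_+(\zeta^{-1})^{-1}$ is supported on $-H$, while $Q\{\zeta^j\,\omega_+(\zeta^{-1})\}$ is supported on $\mathbb{Z}^d\setminus H$ by definition of $Q$. It then remains to check that the convolution of a sequence supported on $-H$ with one supported on $\mathbb{Z}^d\setminus H$ stays in $\mathbb{Z}^d\setminus H$. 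For $H = \mathbb{Z}^d_{\preceq,+}$ one has $-H = \{l\preceq 0\}$ and $\mathbb{Z}^d\setminus H = \{m\prec 0\}$, and compatibility of $\preceq$ with addition gives $l+m\preceq m\prec 0$, so the product lands in $\{n\prec 0\}\subset\mathbb{Z}^d\setminus H$. For $H = \mathbb{Z}^{d-1}\!\times\!\mathbb{Z}_+$ the same computation applies to the last coordinate, since $-H = \{l_d\leq 0\}$ and $\mathbb{Z}^d\setminus H = \{m_d\leq -1\}$ force $n_d = l_d + m_d\leq -1$. In either case $P_+$ kills the error term, whence $P_+\{\sigma\,\omega_j\} = P_+\{\zeta^j\} = \zeta^j$ because $j\in H$, which completes the proof.
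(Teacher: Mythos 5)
Your proof is correct and follows essentially the same route as the paper: both reduce the claim to showing $P_+\{\sigma\,\omega_j\}=\zeta^j$, cancel $\omega_+(\zeta)$ against its reciprocal via the Wiener-Hopf factorization, and observe that the remaining error term $\omega_+(\zeta^{-1})^{-1}\,Q\{\zeta^j\omega_+(\zeta^{-1})\}$ is a product of symbols supported on $-H$ and on $\mathbb{Z}^d\setminus H$, hence is annihilated by $P_+$. Your explicit verification that $-H+(\mathbb{Z}^d\setminus H)\subset\mathbb{Z}^d\setminus H$ for both types of half-space lattice is a point the paper leaves implicit, but the argument is the same.
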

%
\begin{proof}
%
Let $\overline{H}:=\mathbb{Z}^{d}\setminus H$. Using the absolutely convergent Fourier 
expansion (\ref{eq:om+Fourier}) of $\omega_+$ and the Wiener-Hopf factorization (\ref{eq:wh}), 
definition (\ref{eq:sc-symbol}) implies:
\begin{eqnarray*}
\omega_j (\zeta) & = & \omega_+ (\zeta) \,
P_{+}\left\{ \zeta^{j} \sum_{k\in H} \gamma_{k} \zeta^{-k} \right\}  \\
& = & \omega_+ (\zeta) \left\{ \zeta^{j} \omega _{+} (\zeta^{-1}) 
- \sum_{l\in\overline{H}} \gamma_{j-l} \zeta^{l} \right\} \\
& = & \zeta^{j} \omega (\zeta) - \omega_+ (\zeta) \sum_{l\in\overline{H}} \gamma_{j-l} \zeta^{l} ,
\end{eqnarray*}
or, equivalently,
\begin{equation*}
\omega_j (\zeta) \sigma (\zeta) = \zeta^{j} - 
 \omega_+^{-1} (\zeta^{-1}) \sum_{l\in\overline{H}} \gamma_{j-l} \zeta^{l} .
\end{equation*}
Since $j\in H$ and the second term of the right-hand side admits an absolutely convergent 
expansion indexed over $\overline{H}$, it follows that 
\begin{equation*}
P_+ \left\{ \omega_j (\zeta) \sigma (\zeta) \right\} = \zeta^{j}.
\end{equation*}

On the other hand, using the Fourier expansions of $\omega_j$ and $\sigma$, we have
\begin{eqnarray*}
P_+ \left\{ \omega_j (\zeta) \sigma (\zeta) \right\} & = &
\sum_{k\in H} a_{k,j}\, \zeta^{k} \sum_{l\in H} \phi (l-k)\, \zeta^{l-k} \\ 
& = & \sum_{l\in H} \sum_{k\in H} a_{k,j}\, \phi (l-k)\, \zeta^{l}.
\end{eqnarray*}
The system (\ref{eq:SCI-system-Lag}) now follows by comparing the right-hand sides of the 
last two displays, due to the orthogonality of the trigonometric system.
\end{proof}
\medskip

Next, we obtain an explicit formula for the Fourier coefficients of the semi-cardinal symbols 
$\omega_j$, $j\in H$, in terms of those of the Wiener-Hopf factor $\omega_+$; the symmetry 
of these coefficients follows as a consequence. We also establish a crucial Schur-type property, 
equivalent to the uniform boundedness of the Wiener norms $\|\omega_j\|_W$, for $j\in H$.
%
\begin{proposition}		\label{le:sc-symbol}
Assume that $\phi$ satisfies the hypotheses of \emph{Theorem~\ref{th:E&U-CI}}.

\textup{(i)} If $H = \mathbb{Z}^{d}_{\preceq,+}$, then
\begin{equation}
a_{k,j} = \sum_{ 0\preceq l\preceq \min \{ j,k \} } \gamma_{k-l} \gamma_{j-l}, 
\quad j,k\in \mathbb{Z}^{d}_{\preceq,+}.
\label{eq:sc-coeffs+}
\end{equation}
If $H = \mathbb{Z}^{d-1}\!\!\times\!\mathbb{Z}_{+}$, then, for all $j=(j',j_d)$ and $k=(k',k_d)$ in $H$, 
\begin{eqnarray}
\omega_{(j',j_d)} (\zeta) & = & (\zeta')^{j'} \omega_{(0,j_d)} (\zeta), \quad \zeta\in T^d,
\label{eq:sc-symm} \\
a_{k,j} & = & a_{(k'-j',k_d),(0,j_d)}, 
\label{eq:invariance-c} \\
a_{k,j} & = & \sum_{l_d=0}^{\min\{j_d,k_d\}} 
\sum_{l'\in\mathbb{Z}^{d-1}} \gamma_{(k'-l',k_d-l_d)} \gamma_{(j'-l',j_d-l_d)}.
\label{eq:sc-coeffs}
\end{eqnarray}

\textup{(ii)} For both types of half-space lattices $H$, we have $a_{k,j} = a_{j,k}$, for all $j,k \in H$, 
and the following Schur property holds:
\begin{equation}
\sup_{j\in H} \sum_{k\in H} |a_{k,j}| < \infty .
\label{eq:sc-schur}
\end{equation}
\end{proposition}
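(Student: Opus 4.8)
The plan is to derive the explicit coefficient formulas by multiplying out the two absolutely convergent Fourier series appearing in the definition (\ref{eq:sc-symbol}) of $\omega_j$ and collecting the coefficient of $\zeta^k$; the symmetry and the Schur bound in part (ii) will then follow, the former by inspection of the resulting formulas and the latter from submultiplicativity of the Wiener norm. Throughout, the starting point is the twisted-truncation identity (\ref{eq:twisted-trunc}), which lets me write $\omega_j=\omega_+\cdot\zeta^j\omega_-^{(j)}$ (respectively $\omega_+\cdot\zeta^j\omega_-^{(j_d)}$), so that $\omega_j$ is a genuine product of two elements of $W$ whose Fourier coefficients are known explicitly in terms of the $\gamma_k$ via (\ref{eq:om+Fourier}), (\ref{eq:om+truncation}), and (\ref{eq:om+trunc}). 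Since all series involved are absolutely convergent, the term-by-term multiplication and the rearrangements below are justified.

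For $H=\mathbb{Z}^d_{\preceq,+}$, I would insert the Fourier expansion (\ref{eq:om+Fourier}) of $\omega_+$ and the definition (\ref{eq:om+truncation}) of $\omega_-^{(j)}$ into $\omega_j=\omega_+\cdot\zeta^j\omega_-^{(j)}$, obtaining the double sum $\sum_{m\in H}\sum_{0\preceq l\preceq j}\gamma_m\gamma_l\,\zeta^{m+j-l}$. Extracting the coefficient of $\zeta^k$ forces $m=k-j+l$; the membership $m\in H$ together with $0\preceq l\preceq j$ confines $l$ to $\max\{0,j-k\}\preceq l\preceq j$. The substitution $l\mapsto j-l$, legitimate because $\preceq$ is compatible with addition, then turns the summand $\gamma_{k-j+l}\gamma_l$ into $\gamma_{k-l}\gamma_{j-l}$ and maps this range onto $0\preceq l\preceq\min\{j,k\}$, yielding (\ref{eq:sc-coeffs+}). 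Verifying that the two constraints combine into the single interval $\max\{0,j-k\}\preceq l\preceq j$, and that this transforms correctly under the substitution, is the step demanding the most care, since it rests entirely on the order-compatibility of $\preceq$.

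For $H=\mathbb{Z}^{d-1}\!\times\!\mathbb{Z}_+$, I would first prove the shift-invariance (\ref{eq:sc-symm}): because $P_+$ truncates only in the last coordinate, it commutes with multiplication by $(\zeta')^{j'}$, so factoring $\zeta^j=(\zeta')^{j'}\zeta_d^{j_d}$ out of (\ref{eq:sc-symbol}) gives $\omega_{(j',j_d)}=(\zeta')^{j'}\omega_{(0,j_d)}$; reading off Fourier coefficients then yields (\ref{eq:invariance-c}). The explicit formula (\ref{eq:sc-coeffs}) follows by the same multiply-and-collect computation as above, now using (\ref{eq:om+trunc}) for $\omega_-^{(j_d)}$, with the changes of variable $l_d\mapsto j_d-l_d$ and $l'\mapsto j'-l'$ recasting the summand and the admissible range $\max\{0,j_d-k_d\}\le l_d\le j_d$ as $0\le l_d\le\min\{j_d,k_d\}$ over a free index $l'\in\mathbb{Z}^{d-1}$.

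Finally, part (ii) splits cleanly. The symmetry $a_{k,j}=a_{j,k}$ is read off directly, since in both (\ref{eq:sc-coeffs+}) and (\ref{eq:sc-coeffs}) the summand and the summation range are invariant under exchanging $j$ and $k$. For the Schur property (\ref{eq:sc-schur}), I would note that $\sum_{k\in H}|a_{k,j}|=\|\omega_j\|_W$ by (\ref{eq:sc-symb-F}), and then combine submultiplicativity of the Wiener norm with the uniform bound (\ref{eq:om+tr-bound}) to get $\|\omega_j\|_W\le\|\omega_+\|_W\,\|\omega_-^{(j)}\|_W\le\|\omega_+\|_W^2$ (and likewise with $\omega_-^{(j_d)}$), a bound independent of $j$. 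This operator-norm route avoids re-summing the explicit formula; alternatively, the same estimate can be obtained directly from (\ref{eq:sc-coeffs+}) by exchanging the order of summation and recognizing two copies of $\|\omega_+\|_W$.
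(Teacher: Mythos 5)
Your proposal is correct and follows essentially the same route as the paper: expand the two factors of $\omega_j$ from (\ref{eq:sc-symbol}) as absolutely convergent Fourier series, collect the coefficient of $\zeta^k$ (the only cosmetic difference being that you perform the substitution $l\mapsto j-l$ after extracting the coefficient rather than before multiplying), read off the symmetry from the resulting formulas, and obtain the Schur bound from $\|\omega_j\|_W\leq\|\omega_+\|_W\,\|\omega_-^{(j)}\|_W\leq\|\omega_+\|_W^2$ via (\ref{eq:om+tr-bound}). All the justifications you flag (absolute convergence for the rearrangements, order-compatibility of $\preceq$ for the index change, commutation of $P_+$ with multiplication by $\zeta_p$ for $p<d$) are exactly the ones the paper relies on.
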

%
\begin{proof}
(i) Let $H = \mathbb{Z}^{d}_{\preceq,+}$. Using (\ref{eq:om+Fourier}) and (\ref{eq:sc-symbol}), 
we have, for each $j\in \mathbb{Z}^{d}_{\preceq,+}$,
\begin{eqnarray*}
\omega_j (\zeta) 
& = & \sum_{k\succeq 0} \gamma_k \zeta^k  \sum_{0\preceq l\preceq j} \gamma_{j-l} \zeta^l  \\
& = & \sum_{k\succeq 0} \zeta^k  \sum_{ 0\preceq l\preceq \min \{ j,k \} } \gamma_{k-l} \gamma_{j-l},
\end{eqnarray*}
which implies (\ref{eq:sc-coeffs+}). 

Next, let $H = \mathbb{Z}^{d-1}\!\!\times\!\mathbb{Z}_{+}$. Relation (\ref{eq:sc-symm}) 
follows from (\ref{eq:sc-symbol}), since, for any $p\in\{1,\ldots,d-1\}$, multiplication by 
$\zeta_p$ commutes with the truncation operator $P_+$. Also, (\ref{eq:sc-symm}) implies 
(\ref{eq:invariance-c}). To obtain (\ref{eq:sc-coeffs}), we use re-arrangements of absolutely 
convergent $d$-dimensional Fourier series with respect to the last component of the 
summation multi-index. Specifically, we start with the Fourier expansion (\ref{eq:om+Fourier}) 
of $\omega_+$, written as
\begin{equation*}
\omega _{+} (\zeta) = \sum_{n=0}^{\infty} \Gamma_{n} (\zeta')\, \zeta_d^n,
\end{equation*}
where $\Gamma_n$ is the $(d-1)$-dimensional absolutely convergent Fourier series
\begin{equation*}
\Gamma _{n} (\zeta') :=\sum_{k' \in\mathbb{Z}^{d-1}} \gamma_{(k',n)} (\zeta')^{k'},
\quad n\in\mathbb{Z}_+.
\end{equation*}
Then, for $j_d\geq 0$, the truncation $\omega_{-}^{(j_d)}$ of $\omega_+ (\zeta^{-1})$ defined 
in (\ref{eq:om+trunc}) acquires the form
\begin{equation*}
\omega_{-}^{(j_d)} (\zeta) = \sum_{n = 0} ^{j_d}  \Gamma_n \! \left( (\zeta')^{-1} \right) \zeta_d^{-n},
\end{equation*}
which implies 
\begin{equation*}
P_{+}\left\{ \zeta_d^{j_d} \omega _{+} (\zeta^{-1}) \right\} 
\ =\  \zeta_d^{j_d} \omega_{-}^{(j_d)} (\zeta)  
\ =\  \sum_{n = 0} ^{j_d}  \Gamma_{j_d-n} \! \left( (\zeta')^{-1} \right) \zeta_d^{n}.
\end{equation*}
Thus, using (\ref{eq:sc-symbol}) for $j'=0$, we obtain
\begin{eqnarray*}
\omega_{(0,j_d)} (\zeta) 
& = & \omega _{+} (\zeta) P_{+}\left\{ \zeta_d^{j_d} \omega _{+} (\zeta^{-1}) \right\}  \\
& = & \sum_{k_d=0}^{\infty} \Gamma_{k_d} (\zeta') \zeta_d^{k_d}  
\sum_{n = 0} ^{j_d}  \Gamma_{j_d-n} \! \left( (\zeta')^{-1} \right) \zeta_d^{n}  \\
& = & \sum_{k_d=0}^{\infty} A_{k_d,j_d} (\zeta') \zeta_d^{k_d} ,
\end{eqnarray*}
where
\begin{equation*}
A_{k_d,j_d} (\zeta') := \sum_{n = 0} ^{\min\{k_d,j_d\}}  
\Gamma _{k_d-n} (\zeta') \Gamma_{j_d-n} \! \left( (\zeta')^{-1} \right).
\end{equation*}
It follows that the Fourier coefficient of $\omega_{(0,j_d)}$ of index $k=(k',k_d)$ is given explicitly by
\begin{equation*}
a_{(k',k_d),(0,j_d)} = \sum_{l_d=0}^{\min\{j_d,k_d\}} 
\sum_{l'\in\mathbb{Z}^{d-1}} \gamma_{(k'-l',k_d-l_d)} \gamma_{(-l',j_d-l_d)}.
\end{equation*}
Now, (\ref{eq:sc-coeffs}) is a consequence of  the above formula and (\ref{eq:invariance-c}).

(ii) The symmetry of the coefficients follows directly from the explicit formulae 
(\ref{eq:sc-coeffs+}) and (\ref{eq:sc-coeffs}). Note that (\ref{eq:sc-schur}) is equivalent to 
$\sup_{j\in H} \| \omega_j \|_W < \infty$. To prove this, we use  the expression 
(\ref{eq:sc-symbol}) of $\omega_j$ as a product, together with the Wiener algebra property 
$\|uv\|_W \leq \|u\|_W \|v\|_W$, valid for any $u,v \in W$. Specifically, if 
$H = \mathbb{Z}^{d}_{\preceq,+}$, then, due to (\ref{eq:twisted-trunc}) and the uniform 
bound (\ref{eq:om+tr-bound}) on the Wiener norm of the twisted truncation factor of 
(\ref{eq:sc-symbol}), we have
\begin{equation*}
\|\omega_j\|_W \leq \|\omega_+\|_W \|\omega_{-}^{(j)} \|_W \leq \|\omega_+\|_W^2 < \infty, 
\quad j\in H.
\end{equation*}
A similar proof applies to the case $H = \mathbb{Z}^{d-1}\!\!\times\!\mathbb{Z}_{+}$, since 
(\ref{eq:twisted-trunc}) and (\ref{eq:om+tr-bound}) hold with $\omega_{-}^{(j_d)}$ in place 
of $\omega_{-}^{(j)}$.
\end{proof}

\subsection{The semi-cardinal interpolation scheme}
\noindent
\textbf{Semi-cardinal Lagrange functions.}
For each $j\in H$, the Fourier coefficients $\{a_{k,j}\}_{k\in H}$ of the symbol $\omega_j$ 
are now used to define $\chi_j$ via (\ref{eq:SCI-lagrange}). That series is absolutely and 
uniformly convergent on $\mathbb{R}^d$, due to the boundedness of $\phi$ and the absolute 
summability of $\{a_{k,j}\}_{k\in H}$. Further, Proposition~\ref{prop:sc-Lagrange} shows that 
$\chi_j$, $j\in H$, are the expected Lagrange functions for interpolation on $H$, since they satisfy the 
interpolation conditions (\ref{eq:SCI-delta}).
\medskip

\noindent
\textbf{Remark.} 
When $H = \mathbb{Z}^{d-1}\!\!\times\!\mathbb{Z}_{+}$, property (\ref{eq:invariance-c}) 
implies, via re-indexing, the translation symmetry of $\chi_j$ for shifts parallel to the boundary 
of the half-space domain $\Omega = \mathbb{R}^{d-1} \times [0,\infty)$, as follows:
\begin{equation}
\chi_{(j',j_d)} (x',x_d) = \chi_{(0,j_d)} (x' - j' , x_d),\quad \forall x\in\mathbb{R}^d,\ 
j\in \mathbb{Z}^{d-1}\times\mathbb{Z}_{+}.
\label{eq:invariance-L}
\end{equation}

We now formulate and prove the existence and uniqueness properties of the semi-cardinal 
interpolation scheme under minimal conditions on the kernel $\phi$.
%
\begin{theorem}      \label{th:E&U-SCI}
Assume that $\phi$ satisfies the hypotheses of \emph{Theorem~\ref{th:E&U-CI}}, and let the half-space 
lattice $H$ be either $\mathbb{Z}^{d-1}\!\!\times\!\mathbb{Z}_+$ or $\mathbb{Z}^{d}_{\preceq,+}$. 
Then, for any bounded sequence of data values $\{y_j\}_{j\in H}$, there exists a unique bounded 
sequence of coefficients $\{c_j\}_{j\in H}$, such that the continuous function \textup{(\ref{eq:SCI-repres})}
satisfies the semi-cardinal interpolation system \textup{(\ref{eq:SCI-problem})}. Specifically, we have:
\begin{equation}
c_k = \sum_{j\in H} a_{k,j} y_j ,\quad k\in H,
\label{eq:SCI-sol-coeffs}
\end{equation}
where $a_{k,j}$ is the corresponding coefficient in the representation \emph{(\ref{eq:SCI-lagrange})} 
of $\chi_j$. The unique semi-cardinal interpolant $s$ admits the Lagrange representation 
\begin{equation}
s ( x ) =\sum_{j\in H} y_{j} \chi_{j} ( x ) ,
\quad x\in\mathbb{R}^d.
\label{eq:SCI-scheme}
\end{equation}
Moreover, if $I_{\phi}^H : \ell_{\infty} (H)\rightarrow L_{\infty} (\mathbb{R}^d)$ denotes the operator 
that associates to each $y\in\ell_{\infty} (H)$ its unique semi-cardinal interpolant $I_{\phi}^H y:=s$, 
as above, then $I_{\phi}^H$ is a linear bounded operator and its norm (Lebesgue constant) satisfies
\begin{equation*}
\| I_{\phi}^H \|_{\infty} = \sup_{x\in\mathbb{R}^d}  \sum_{j\in H} |\chi_j ( x )| 
\leq  |\phi|_\infty \sup_{j\in H} \|\omega_j\|_W < \infty.
\end{equation*}
\end{theorem}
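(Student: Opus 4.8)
The plan is to follow the blueprint of the cardinal case (Theorem~\ref{th:E&U-CI}), with the single $\ell_1$ coefficient sequence replaced by the doubly indexed family $\{a_{k,j}\}_{j,k\in H}$ and with the role of the $\ell_1$-bound played by the Schur property together with the symmetry $a_{k,j}=a_{j,k}$ from Proposition~\ref{le:sc-symbol}. Since these two facts drive every interchange of summation order below, I would record at the outset their combined consequence, namely the uniform control of each \emph{column} sum: for every fixed $k\in H$,
\[
\sum_{j\in H} |a_{k,j}| \;=\; \sum_{j\in H} |a_{j,k}| \;=\; \|\omega_k\|_W \;\le\; \sup_{j\in H}\|\omega_j\|_W \;=:\; S \;<\;\infty ,
\]
where the first equality uses the symmetry, the second uses (\ref{eq:sc-symb-F}), and the finiteness is the Schur bound (\ref{eq:sc-schur}).

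For existence, I would define $s$ by the right-hand side of (\ref{eq:SCI-scheme}) and first verify absolute and uniform convergence through the estimate
\[
\sum_{j\in H} |y_j|\,|\chi_j(x)| \;\le\; \|y\|_\infty \sum_{k\in H} |\phi(x-k)| \sum_{j\in H} |a_{k,j}| \;\le\; \|y\|_\infty\, S\, |\phi|_\infty ,
\]
which uses (\ref{eq:SCI-lagrange}), the column bound above, and the definition of $|\phi|_\infty$. This majorant legitimizes Fubini, so that $s(x)=\sum_{j\in H} y_j \chi_j(x)$ rearranges into the required form (\ref{eq:SCI-repres}) with coefficients $c_k=\sum_{j\in H} a_{k,j} y_j$; the same bound yields $|c_k|\le \|y\|_\infty S$, hence $c\in\ell_\infty(H)$. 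The interpolation conditions $s(l)=y_l$ for $l\in H$ then follow at once from the Lagrange identity $\chi_j(l)=\delta_{j,l}$ of Proposition~\ref{prop:sc-Lagrange}.

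For uniqueness I would show that the homogeneous system $\sum_{k\in H} c_k\,\phi(l-k)=0$, $l\in H$, with $c$ bounded, forces $c=0$, exactly as in the cardinal proof. Fixing $m\in H$, I multiply the $l$-th equation by $a_{l,m}$ and sum over $l\in H$; the double series converges absolutely since $\sum_{l,k}|a_{l,m}|\,|c_k|\,|\phi(l-k)| \le \|c\|_\infty\,\|\omega_m\|_W\,|\phi|_\infty \le \|c\|_\infty S |\phi|_\infty$, using the Schur bound on the \emph{row} sum $\sum_{l}|a_{l,m}|$. Interchanging the summations gives $0=\sum_{k\in H} c_k\big(\sum_{l\in H} a_{l,m}\phi(l-k)\big)$. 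The inner sum equals $\delta_{m,k}$: writing $\phi(l-k)=\phi(k-l)$ by symmetry of $\phi$ converts it into $\sum_{l\in H} a_{l,m}\phi(k-l)$, which is the left-hand side of the Lagrange system (\ref{eq:SCI-system-Lag}) with $j=m$ read at the free index $k$, hence equal to $\delta_{m,k}$. Therefore $c_m=0$ for all $m\in H$, and uniqueness follows.

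For the operator norm I would bound $\|I_\phi^H y\|_{L_\infty}\le \|y\|_\infty\,\sup_{x}\Lambda^H(x)$ with $\Lambda^H:=\sum_{j\in H}|\chi_j(\cdot)|$, so that $\|I_\phi^H\|_\infty\le \sup_x \Lambda^H(x)$; the reverse inequality, hence equality, comes from the standard near-optimal sign choice $y_j=\mathrm{sgn}\,\chi_j(x_0)$ at a near-maximizer $x_0$ of $\Lambda^H$, as in Riemenschneider and Sivakumar \cite{rs99b}. The stated finite bound is once more the column estimate, $\Lambda^H(x)\le \sum_{k\in H}|\phi(x-k)|\sum_{j\in H}|a_{k,j}|\le S\,|\phi|_\infty$, giving $\|I_\phi^H\|_\infty\le |\phi|_\infty\sup_{j\in H}\|\omega_j\|_W<\infty$ by Proposition~\ref{le:sc-symbol}(ii). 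I do not expect a genuine obstacle: all delicate content — the Lagrange interpolation identity and the uniform Schur/symmetry control — has been front-loaded into Propositions~\ref{prop:sc-Lagrange} and~\ref{le:sc-symbol}. The one point needing care is that the coefficients are doubly indexed, so every Fubini step must be justified by the \emph{uniform} column bound $\sum_{j}|a_{k,j}|\le S$, which is not the Schur property itself but its transpose, available only after invoking the symmetry $a_{k,j}=a_{j,k}$; keeping the two indices and the two summation directions straight is the main bookkeeping hazard.
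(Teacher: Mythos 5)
Your proposal is correct and follows essentially the same route as the paper: both arguments rest on the Lagrange identity of Proposition~\ref{prop:sc-Lagrange} and on the Schur bound plus the symmetry $a_{k,j}=a_{j,k}$ from Proposition~\ref{le:sc-symbol} to justify every interchange of summation, with the same estimates for existence, uniqueness, and the Lebesgue constant (including the sign-choice argument for the reverse inequality). The only differences are cosmetic — you front-load the column-sum bound $\sum_{j\in H}|a_{k,j}|\le\sup_{j}\|\omega_j\|_W$ as a displayed preliminary and treat existence before uniqueness, whereas the paper reverses that order.
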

%

%
\begin{proof}
We argue along the lines of the proof corresponding to the cardinal interpolation case, the crucial 
difference being that the finiteness of the Wiener norm $\|\omega\|_W$ is replaced here by the 
uniform boundedness of $\|\omega_j\|_W$, for $j\in H$. First, consider the homogeneous system 
corresponding to the SCI problem (\ref{eq:SCI-problem}):
\begin{equation*}
\sum_{k\in H} c_k \phi (j-k) = 0,
\quad j\in H,
\end{equation*}
for a bounded sequence $\{c_k\}_{k\in H}$. For every $l\in H$, multiplying each of the above 
equations by $a_{j,l}$ and summing over $j$, we obtain
\begin{eqnarray*}
0 & = & \sum_{j\in  H} a_{j,l} \sum_{k\in  H} c_k \phi (j-k)  
 \ = \ \sum_{k\in  H} c_k  \sum_{j\in  H} a_{j,l} \phi (k-j)    \\
& = & \sum_{k\in  H} c_k \chi_l (k)
 \ = \ \sum_{k\in  H} c_k \delta_{l,k} \ = \ c_l,
\end{eqnarray*}
where we have used the fact that $\phi$ is symmetric, as well as (\ref{eq:SCI-delta}). 
The exchange of summation in the second equality above is permitted by virtue of 
the absolute convergence of the double sum:
\begin{eqnarray*}
\sum_{j,k\in  H} |a_{j,l} c_k \phi (j-k)|  
& = & \sum_{j\in  H} | a_{j,l} |  \sum_{k\in  H} |c_k| |\phi (j-k)|   \\
& \leq & \|c\|_{\infty}  \sum_{m\in \mathbb{Z}^d} |\phi (m)|  \,
\sup_{k\in H}  \sum_{j\in  H} |a_{j,k}|  \ < \ \infty, 
\end{eqnarray*}
based on the Schur property (\ref{eq:sc-schur}) and on (\ref{eq:minimal}). This establishes the 
uniqueness.

To prove existence, note that the right-hand side of (\ref{eq:SCI-scheme}), which evidently 
satisfies the cardinal interpolation conditions (\ref{eq:SCI-problem}), can be written in the 
form (\ref{eq:SCI-repres}), for all $x\in\mathbb{R}^d$:
\begin{eqnarray*}
\sum_{j\in  H} y_{j} \chi_j (x) 
& = & \sum_{j\in  H} y_j  \sum_{k\in H} a_{k,j} \phi (x-k)   \\
& = & \sum_{k\in  H} c_k \phi (x-k),
\end{eqnarray*}
where $c_k$ is given by (\ref{eq:SCI-sol-coeffs}) and the interchange of summation is again 
permitted by the absolute convergence of the double series:
\begin{eqnarray*}
\sum_{j,k\in  H} |y_j a_{k,j} \phi (x-k)|  
& = & \sum_{k\in  H} |\phi (x-k)|  
\sum_{j\in  H} |y_j| |a_{k,j}|  \\
& \leq & \|y\|_{\infty} \left( \sup_{k\in H} \sum_{j\in  H} |a_{k,j}| \right)
\sup_{x\in\mathbb{R}^d} \sum_{k\in H} |\phi (x-k)| \\
& \leq & \|y\|_{\infty} \left( \sup_{k\in H} \|\omega_k\|_W \right) |\phi|_\infty ,
\end{eqnarray*}
due to (\ref{eq:sc-schur}), the symmetry property $a_{k,j} = a_{j,k}$, and the hypothesis 
(\ref{eq:minimal}). By uniqueness, it follows that the right-hand side of 
(\ref{eq:SCI-scheme}) is the required semi-cardinal interpolant to the data sequence.

Turning to the last statement of the theorem, note that the last paragraph implies, in particular,
\begin{equation*}
\| I_{\phi}^H \|_{\infty} \leq \sup_{x\in\mathbb{R}^d} \Lambda_H (x) 
\leq |\phi|_\infty  \sup_{k\in H} \|\omega_k\|_W, 
\end{equation*}
where $\Lambda_H (x):=\sum_{j\in H} | \chi_j ( x ) |$ is the continuous `Lebesgue function' 
for semi-cardinal interpolation on $H$. To show that on the left we have in fact equality, 
let $\{x_n\}_{n=1}^\infty$ be a sequence of points 
in $\mathbb{R}^d$, such that $\Lambda_H (x_n) \to \sup_{x\in\mathbb{R}^d} \Lambda_H (x)$, 
as $n\to\infty$. For each $n$, choose a sequence $y^{(n)}=\{y^{(n)}_j\}_{j\in H}$ with terms 
$y^{(n)}_j = \pm 1$, $\forall j\in H$, such that $y^{(n)}_j \chi_j (x_n) = | \chi_j (x_n) |$, 
$\forall j\in H$. Since $\| y^{(n)} \|_\infty = 1$, we have
\begin{equation*}
\Lambda_H (x_n) = \sum_{j\in H} y^{(n)}_j \chi_j (x_n) 
= I_{\phi}^H y^{(n)} (x_n)  \leq  \| I_{\phi}^H y^{(n)} \|_{\infty}  \leq \| I_{\phi}^H \|_{\infty}.
\end{equation*}
The proof is completed by letting $n\to\infty$.
\end{proof}

\begin{corollary}
For a function $s$ of the form \emph{(\ref{eq:SCI-repres})}, where $c\in\ell_{\infty} ( H)$, 
the following reproduction formula holds:
\begin{equation*}
s ( x ) =\sum_{j\in H} s(j) \chi_j ( x ) ,\quad x\in\mathbb{R}^d.
\end{equation*}
In particular, this applies to the kernel shifts $s:=\phi (\cdot - k)$, with $k\in H$, hence, 
for $k=0$, we have $\phi ( x ) =\sum_{j\in H} \phi(j) \chi_j ( x )$, for all $x\in\mathbb{R}^d$.
\end{corollary}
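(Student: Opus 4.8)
The plan is to deduce the reproduction formula directly from the existence-and-uniqueness result of Theorem~\ref{th:E&U-SCI}, by recognizing $s$ as the semi-cardinal interpolant to its own nodal values. First I would set $y_j := s(j)$ for $j\in H$ and check that $\{y_j\}_{j\in H}$ is a bounded sequence: since $s$ has the form (\ref{eq:SCI-repres}) with $c\in\ell_\infty(H)$, the estimate
\[
\sup_{j\in H} |s(j)| \;\leq\; \sup_{x\in\mathbb{R}^d}\sum_{k\in H}|c_k|\,|\phi(x-k)| \;\leq\; \|c\|_\infty\,|\phi|_\infty \;<\;\infty
\]
follows at once from hypothesis (\ref{eq:minimal}). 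Thus $y\in\ell_\infty(H)$, so Theorem~\ref{th:E&U-SCI} applies to the data $y$.

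Next I would observe that $s$ itself is a function of the form (\ref{eq:SCI-repres}), with the bounded coefficient sequence $c$, that by construction satisfies the interpolation conditions $s(j)=y_j$ for all $j\in H$; that is, $s$ solves the semi-cardinal interpolation problem (\ref{eq:SCI-problem}) for the data $y$. On the other hand, Theorem~\ref{th:E&U-SCI} produces a solution of the same problem, namely the Lagrange representation $\sum_{j\in H} y_j\chi_j = \sum_{j\in H} s(j)\chi_j$, whose coefficient sequence $c'_k = \sum_{j\in H} a_{k,j}\,y_j$, given by (\ref{eq:SCI-sol-coeffs}), is itself bounded thanks to the symmetry $a_{k,j}=a_{j,k}$ and the Schur property (\ref{eq:sc-schur}). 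Since the theorem asserts that the bounded coefficient sequence solving the problem is \emph{unique}, the two representations must have identical coefficients, $c=c'$, and therefore the two functions coincide. This gives $s(x) = \sum_{j\in H} s(j)\chi_j(x)$ for all $x\in\mathbb{R}^d$, as claimed.

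Finally, for the particular consequence, I would apply the formula just established to the kernel shift $s := \phi(\cdot - k)$ with $k\in H$, which is of the form (\ref{eq:SCI-repres}) with the finitely supported (hence bounded) coefficient sequence $c_m = \delta_{m,k}$; its nodal values are $s(j) = \phi(j-k)$, so the reproduction formula yields $\phi(x-k) = \sum_{j\in H} \phi(j-k)\,\chi_j(x)$, and the case $k=0$ produces the stated identity $\phi(x) = \sum_{j\in H}\phi(j)\,\chi_j(x)$.

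There is no genuine obstacle here: the entire argument rests on the uniqueness half of Theorem~\ref{th:E&U-SCI}, and the only points requiring care are the verifications that both the data $y=\{s(j)\}_{j\in H}$ and the alternative coefficient sequence $c'$ lie in $\ell_\infty(H)$, so that the uniqueness clause genuinely applies. Both facts are immediate consequences of (\ref{eq:minimal}) and the Schur property (\ref{eq:sc-schur}) underlying the theorem.
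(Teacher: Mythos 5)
Your proof is correct and follows exactly the route the paper intends: the corollary is stated as an immediate consequence of Theorem~\ref{th:E&U-SCI}, obtained by taking $y_j:=s(j)$ (bounded by (\ref{eq:minimal})) and invoking the uniqueness of the bounded coefficient sequence. The verifications you single out — boundedness of the nodal values and of the Lagrange coefficients via the Schur property (\ref{eq:sc-schur}) — are precisely the points the theorem's proof already supplies, so nothing further is needed.
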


Next, we provide the Toeplitz operator interpretation of Theorem~\ref{th:E&U-SCI}, by considering 
the convolution operator represented by the semi-infinite multi-index Toeplitz matrix 
$T_{\phi} = [\phi (j-k)]_{j,k\in H}$, i.e.\ for each bounded sequence $c=\{c_k\}_{k\in H}$, 
we set $T_{\phi} c = y=\{y_j\}_{j\in H}$, where $y_j = \sum_{k\in H} \phi(j-k) c_k$, $j\in H$.
Also, let $G = [G_{kj}]_{k,j\in H}$ be the matrix defined by $G_{kj} = \gamma_{k-j}$ for $k,j\in H$, 
where $\{\gamma_k\}_{k\in H}$ is the sequence of Fourier coefficients of the symbol 
$\omega_+$ (recall (\ref{eq:om+Fourier})), and where we set $\gamma_k = 0$ for 
$k\in\mathbb{Z}^d\setminus H$.

\begin{corollary}	\label{cor:SCI-operator}
Under the hypotheses of \textup{Theorem~\ref{th:E&U-SCI}}, the linear operator $T_{\phi}$ is 
bounded and invertible on $\ell_p (H)$, for all $p\in [1,\infty]$. Further, the inverse operator 
$T_{\phi}^{-1} = [a_{k,j}]_{k,j\in H}$ admits the factorization:
\begin{equation}
T_{\phi}^{-1} = G G^{T} ,
\label{eq:cholesky}
\end{equation}
where $G$ is also bounded and invertible on $\ell_p(H)$ and $G^T$\! denotes its transpose. 
\end{corollary}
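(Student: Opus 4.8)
The plan is to assemble the statement from three ingredients already established: the Schur test for boundedness, the explicit coefficient formulas of Proposition~\ref{le:sc-symbol}, and the semigroup structure of $H$. First I would record that $T_\phi$ is bounded on every $\ell_p(H)$, $p\in[1,\infty]$, by the Schur test: being the compression to $H$ of the Laurent matrix $L_\phi$, both its row and column sums are dominated by $\sum_{m\in\mathbb{Z}^d}|\phi(m)| = \|\sigma\|_W < \infty$. The same test applies to the matrix $A := [a_{k,j}]_{k,j\in H}$, using the Schur property (\ref{eq:sc-schur}) together with the symmetry $a_{k,j} = a_{j,k}$ from Proposition~\ref{le:sc-symbol}(ii) to bound both row and column sums simultaneously; hence $A$ is bounded on every $\ell_p(H)$.

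For invertibility, I would observe that the interpolation identity (\ref{eq:SCI-system-Lag}), established in Proposition~\ref{prop:sc-Lagrange}, reads entrywise as $T_\phi A = I$. Because $\phi$ is symmetric, $T_\phi$ is a symmetric matrix, and $A$ is symmetric as well; transposing $T_\phi A = I$ therefore yields $A T_\phi = I$. Since $T_\phi$ and $A$ are both Schur-bounded, these entrywise identities upgrade to genuine operator identities on each $\ell_p(H)$, the interchange of summations needed to compose the two matrices being licensed by absolute convergence. Thus $T_\phi$ is invertible on $\ell_p(H)$ with $T_\phi^{-1} = A = [a_{k,j}]$.

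It remains to produce the factorization. I would first verify that $G = [\gamma_{k-j}]_{k,j\in H}$ is bounded on $\ell_p(H)$ by the Schur test, its row and column sums being bounded by $\|\omega_+\|_W$. Its invertibility follows from the Wiener--Hopf construction: since $\omega_+^{-1} = \exp(-\Lambda_+)\in W$ has Fourier coefficients $\{\tilde\gamma_k\}_{k\in H}$ again supported on $H$, the matrix $\tilde G := [\tilde\gamma_{k-j}]_{k,j\in H}$ is Schur-bounded, and the semigroup property $H+H\subset H$ collapses the products $G\tilde G$ and $\tilde G G$ to the diagonal: using $\gamma_m=\tilde\gamma_m=0$ for $m\notin H$, the $(k,j)$-entry of $G\tilde G$ reduces to the $(k-j)$-th Fourier coefficient of $\omega_+\omega_+^{-1}\equiv 1$, namely $\delta_{k,j}$. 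Hence $G^{-1} = \tilde G$ is bounded on every $\ell_p(H)$.

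Finally, I would compute $(GG^T)_{kj} = \sum_{l\in H}\gamma_{k-l}\gamma_{j-l}$ and match it against the explicit formulas of Proposition~\ref{le:sc-symbol}(i). Since $\gamma_m = 0$ for $m\notin H$, the summation index is automatically restricted: to $0\preceq l\preceq\min\{j,k\}$ when $H = \mathbb{Z}^d_{\preceq,+}$, reproducing (\ref{eq:sc-coeffs+}), and to $0\leq l_d\leq\min\{j_d,k_d\}$ with $l'\in\mathbb{Z}^{d-1}$ when $H = \mathbb{Z}^{d-1}\times\mathbb{Z}_+$, reproducing (\ref{eq:sc-coeffs}). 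In both cases $(GG^T)_{kj} = a_{k,j}$, so $GG^T = A = T_\phi^{-1}$. I expect the only genuinely delicate point to be the passage from entrywise to operator identities, namely checking that the Schur bounds legitimately justify every interchange of summation in the compositions $T_\phi A$, $G\tilde G$, and $GG^T$; the remainder is bookkeeping that runs in parallel for the two families of half-space lattices.
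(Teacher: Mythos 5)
Your proposal is correct and follows essentially the same route as the paper: Schur-test boundedness, the explicit formulas of Proposition~\ref{le:sc-symbol} with the convention $\gamma_k=0$ off $H$ to identify $a_{k,j}=(GG^{T})_{kj}$, and the Fourier coefficients of $\omega_+^{-1}$ to exhibit $G^{-1}$. The only variation is that you obtain invertibility of $T_\phi$ on every $\ell_p(H)$ directly from the two-sided entrywise identity $T_\phi A = A T_\phi = I$ (using the symmetry of $T_\phi$ and $A$ and the Schur bounds to legitimize the compositions), whereas the paper invokes the bijectivity furnished by Theorem~\ref{th:E&U-SCI} together with the Banach Inverse Theorem; your route is slightly more self-contained and treats all $p$ uniformly.
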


\begin{proof}
The classical Schur test (see Gr\"{o}chenig \cite{gro01}) provides the inequality
\begin{equation*}
\| T_{\phi} c \|_p  \leq  \|  c \|_p\,  \sup_{j\in H} \sum_{k\in H} |\phi (j-k)|
\leq  \| c \|_p \sum_{k\in \mathbb{Z}^d} |\phi (k)|, \quad c\in\ell_p (H),
\end{equation*}
showing that $T_{\phi} \colon \ell_p (H) \to \ell_p (H)$ is a bounded linear operator. Further, 
Theorem~\ref{th:E&U-SCI} implies that $T_{\phi}$ is one-to-one and onto. Then the Banach 
Inverse Theorem guarantees that $T_{\phi}^{-1}$ is bounded. This fact also follows directly 
from (\ref{eq:sc-schur}) by the Schur test, since formula 
(\ref{eq:SCI-sol-coeffs}) shows that the column of index $j$ of $T_{\phi}^{-1}$ is the 
vector of Fourier coefficients of the symbol $\omega_j$. 

To obtain the factorization of $T_{\phi}^{-1}$, note that, with the above 
convention $\gamma_k = 0$ for $k\not\in H$, both explicit formulae 
(\ref{eq:sc-coeffs+}) and (\ref{eq:sc-coeffs}) take the same form:
\begin{equation*}
a_{k,j} = \sum_{l\in H} \gamma_{k-l} \gamma_{j-l}, \quad k,j\in H.
\end{equation*}
Hence, the definition of the matrix $G$ implies 
$a_{k,j} = \sum_{l\in H} G_{kl} G_{jl} = (GG^{T})_{kj}$ for all $k,j\in H$, 
which proves (\ref{eq:cholesky}). Another application of the Schur test shows that $G$ is 
also bounded on $\ell_p (H)$. Further, one can verify that the bounded inverse 
of $G$ is given by $G^{-1} = [\widetilde{\gamma}_{k-j}]_{k,j\in H}$, where 
$\{\widetilde{\gamma}_k\}_{k\in H}$ is the sequence of Fourier coefficients of the symbol 
$\omega_+^{-1}$ and $\widetilde{\gamma}_k := 0$ for $k\in\mathbb{Z}^d\setminus H$.
\end{proof}
\medskip

\noindent
\textbf{Remark.} 
For $d=1$, the factorization (\ref{eq:cholesky}) was given by Krein \cite[(13.27)]{mgk58}. 
For $d>1$, there is an important distinction between the two types of half-space lattices.
Indeed, if $H = \mathbb{Z}^{d}_{\preceq,+}$, for a linear order $\preceq$, then 
the matrix $G$ and its inverse $G^{-1}$ are lower triangular, in the sense that 
$G_{kj} = 0 = (G^{-1})_{kj}$ for $k\prec j$. In this case, (\ref{eq:cholesky}) provides the Cholesky 
factorization of the inverse matrix $T_{\phi}^{-1}$  relative to $\preceq$. 
However, if $H = \mathbb{Z}^{d-1}\!\!\times\!\mathbb{Z}_+$, it can be readily seen that 
the lower triangular structure of $G$ is lost.
\subsection{Decay transfer}

We now impose specific algebraic or exponential decay conditions on $\phi$ of the form 
considered in \S2.2, and we prove that these carry over to all semi-cardinal Lagrange functions 
$\chi_j$, $j\in H$, with constants independent of $j$. 
To this aim, we will first obtain the decay transfer to the Fourier 
coefficients of the semi-cardinal symbols $\omega_j$, $j\in H$, i.e.\ to the inverse matrix 
$T_{\phi}^{-1} = [a_{k,j}]_{k,j\in H}$. 
\medskip

\noindent
\textbf{Decay transfer to Fourier coefficients of semi-cardinal symbols}
%

\begin{theorem}      \label{th:SCI-decay-symb}
Assume that $\phi$ satisfies the hypotheses of \emph{Theorem~\ref{th:E&U-CI}}, and let 
$H$ be a half-space lattice.

\textbf{\emph{(i) Algebraic decay.}} If, in addition, $\phi$ decays algebraically according to  
\emph{(\ref{eq:decay-alg})}, with power $\alpha>d$, then both the Fourier coefficients  
of the Wiener-Hopf factor $\omega_+$ and those of the semi-cardinal symbols $\omega_j$ 
decay with the same power, i.e.\ there exists $C_{1,2}=C_{1,2} (\alpha,d,\phi)>0$, such that:
\begin{eqnarray}
| \gamma _{k} | & \leq & C_1 (1+ \|k\|)^{-\alpha}, \quad k\in H,
\label{eq:om+coeffs-poly} \\
| a_{k,j} | &\leq & C_2 (1+\|k-j\|)^{-\alpha} ,\quad  j,k\in H.
\label{eq:sc-symb-decay-poly}
\end{eqnarray}

\textbf{\emph{(ii) Exponential decay.}} If $\phi$ decays exponentially as in  
\emph{(\ref{eq:decay-exp})}, with a rate $\alpha>0$, then there exist constants 
$\beta\in(0,\alpha)$, $C_1=C_1(\beta,\phi,H,d)$, such that 
\begin{equation}
\vert \gamma _{k} \vert \leq C_1 \, e^{-\beta |k|}, \quad  k\in H.
\label{eq:om+coeffs}
\end{equation}
Also, for each $\beta'\in(0,\beta)$, there exists $C_{2}=C_{2}(\beta,\beta',\phi,H,d)$, 
such that
\begin{equation}
\vert a_{k,j} \vert \leq C_2 \,  e^{-\beta' |k-j|}, \quad  j,k\in H,
\label{eq:sc-symb-decay}
\end{equation}
where the right-hand side exponential of the last inequality can be replaced by 
$e^{-\beta' |k'-j'|-\beta |k_d-j_d|}$, if $H=\mathbb{Z}^{d-1}\times\mathbb{Z}_{+}$.
\end{theorem}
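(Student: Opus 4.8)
The plan is to prove both parts in two stages: first establish the stated decay of the Fourier coefficients $\{\gamma_k\}_{k\in H}$ of the Wiener--Hopf factor $\omega_+$, and then transfer it to the entries $a_{k,j}$ of $T_\phi^{-1}$ using the quadratic identity $a_{k,j}=\sum_{l\in H}\gamma_{k-l}\gamma_{j-l}$ from the Cholesky factorization $T_\phi^{-1}=GG^T$ of Corollary~\ref{cor:SCI-operator} (with $\gamma_m=0$ for $m\notin H$). The decisive structural fact is that this identity is a convolution in the difference $k-j$: bounding $|\gamma_m|\le C_1 w(m)$ for all $m\in\mathbb{Z}^d$ by a radial weight $w$ (namely $(1+\|\cdot\|)^{-\alpha}$ or $e^{-\beta|\cdot|}$), which is legitimate since $\gamma_m=0$ off $H$, one gets $|a_{k,j}|\le C_1^2\sum_{l\in\mathbb{Z}^d}w(k-l)w(j-l)=C_1^2\,(w*w)(k-j)$, so the desired bounds in $\|k-j\|$ and $|k-j|$ reduce to a single convolution estimate with constants depending only on $\alpha,d$ (resp.\ $\beta,\beta',d$), hence independent of $j$.

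For the $\gamma$-decay I would follow the construction $\omega_+=\exp(\Lambda_+)$ backwards along the chain $\phi\to\sigma_\phi\to\omega=1/\sigma_\phi\to\log\omega\to\Lambda_+\to\omega_+$, verifying class-preservation at each link. By Theorems~\ref{th:CI-decay-poly}--\ref{th:CI-decay-exp} and Lemma~\ref{le:wiener}, the kernel decay passes to $\sigma_\phi$ and then to $\omega$, placing $\omega$ in $J_\alpha$ (algebraic case), or, in the exponential case, making $\omega$ analytic and sectorial on the polyannulus $\mathcal{A}_\alpha^d$ with coefficients in $\mathcal{E}_{\alpha'}$ for every $\alpha'<\alpha$. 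The step $\omega\to\log\omega$ preserves the class: for exponential decay because sectoriality lets one choose a branch of $\log$ analytic on $\mathcal{A}_\alpha^d$, so the Cauchy estimates of Lemma~\ref{le:wiener}(ii) give $|\lambda_k|\le Ce^{-\alpha'|k|}$; for algebraic decay this is exactly the Wiener--L\'evy lemma for Jaffard's algebra (Lemma~\ref{le:wiener-levy}). Passing from $\log\omega$ to $\Lambda_+$ is free, since the Fourier coefficients of $\Lambda_+$ are those $\lambda_k$ with $k\in H$ (halved on the boundary of $H$), hence decay at the same rate and are supported on $H$. Finally $\omega_+=\exp(\Lambda_+)=\sum_{n\ge0}\Lambda_+^n/n!$ stays in the class by summing the per-power bounds of Lemma~\ref{le:DCP}: in the algebraic case the recursion from Lemma~\ref{le:DCP}(i) yields $C(\Lambda_+^n)\le C'(2^\alpha\|\Lambda_+\|_W)^n$, whose exponential-weighted sum converges and keeps the rate $\alpha$; in the exponential case the multidimensional convolution underlying Lemma~\ref{le:DCP}(ii) gives $C(\Lambda_+^n)\le (C(\Lambda_+)\coth^d(\tfrac{\alpha'-\beta}{2}))^n$ for any $\beta<\alpha'$, again summable after division by $n!$. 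This proves (\ref{eq:om+coeffs-poly}) and (\ref{eq:om+coeffs}).

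The convolution estimate of the first paragraph is then carried out by splitting $\sum_m w(m)w(m-p)$ with $p=k-j$: according to whether $\|m\|\le\|p\|/2$ or not in the algebraic case, giving $C(1+\|p\|)^{-\alpha}$ exactly as in Theorem~\ref{th:CI-decay-poly}; and via $|m|+|m-p|\ge|p|$ in the exponential case, giving $e^{-\beta'|p|}\coth^d(\tfrac{\beta-\beta'}{2})$ exactly as in Lemma~\ref{le:DCP}(ii). This yields (\ref{eq:sc-symb-decay-poly}) and (\ref{eq:sc-symb-decay}). For the anisotropic refinement when $H=\mathbb{Z}^{d-1}\times\mathbb{Z}_+$ I would instead use the explicit formula (\ref{eq:sc-coeffs}), in which the vertical index $l_d$ runs only over the finite range $0\le l_d\le\min\{j_d,k_d\}$: the horizontal sum over $l'\in\mathbb{Z}^{d-1}$ is a genuine $(d-1)$-dimensional convolution and loses the rate from $\beta$ to $\beta'$, whereas the finite vertical sum is geometric and, since $(k_d-l_d)+(j_d-l_d)\ge|k_d-j_d|$ throughout that range, retains the full rate $\beta$ in $|k_d-j_d|$, producing the factor $e^{-\beta'|k'-j'|-\beta|k_d-j_d|}$.

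The main obstacle is the transfer of algebraic decay across the logarithm, i.e.\ $\log\omega\in J_\alpha$. Unlike the entire function $\exp$, the logarithm cannot be handled by a globally convergent power series in $\omega$ with summable $J_\alpha$-norms, because rescaling $\omega$ controls its sup-norm but not its Wiener norm; this step genuinely requires the Wiener--L\'evy lemma for Jaffard's algebra, which is the technical heart of the algebraic case, whereas the exponential analogue rests only on analyticity and Cauchy estimates. As an independent verification one may run the operatorial proof indicated in the Introduction, deducing the off-diagonal decay of $G=[\gamma_{k-j}]_{k,j\in H}$, and hence of $T_\phi^{-1}=GG^T$, from Jaffard's inverse-closedness theorems \cite{sj90} for matrices with algebraic or exponential off-diagonal decay; here uniformity in $j$ is automatic from the Toeplitz structure of $G$.
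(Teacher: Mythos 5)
Your proof is correct and follows the same skeleton as the paper's own first proof: the chain $\omega\to\log\omega\to\Lambda_+\to\omega_+$, with the Wiener--L\'{e}vy lemma for $J_\alpha$ (Lemma~\ref{le:wiener-levy}) carrying the algebraic case across the logarithm and analyticity plus sectoriality carrying the exponential case, followed by a convolution estimate for the entries $a_{k,j}$ and the same finite-geometric-sum argument for the anisotropic refinement on $\mathbb{Z}^{d-1}\!\times\!\mathbb{Z}_+$. You deviate in two local ways. First, for the step $\Lambda_+\to\omega_+=\exp(\Lambda_+)$ the paper simply applies the Wiener--L\'{e}vy lemma a second time (with $F=\exp$), whereas you sum the power series $\sum_n \Lambda_+^n/n!$ using the per-power constants of Lemma~\ref{le:DCP}; this is more explicit and works because those constants grow at most geometrically in $n$ --- in the exponential case one must, as your bound $C(\Lambda_+^n)\leq (C(\Lambda_+)\coth^d(\tfrac{\alpha'-\beta}{2}))^n$ implicitly does, keep one factor at the full rate in each multiplication so that the output rate $\beta$ stays fixed and only the constant accumulates. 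Second, for the transfer to $a_{k,j}$ the paper multiplies the symbols $\omega_+$ and the truncation $\omega_-^{(j)}$ via (\ref{eq:sc-symb-fact}) and invokes the uniform bound (\ref{eq:om+tr-bound}), while you use the Gram identity $a_{k,j}=\sum_{l\in H}\gamma_{k-l}\gamma_{j-l}$ from Corollary~\ref{cor:SCI-operator}, extend $\gamma$ by zero off $H$, and reduce everything to a single convolution $(w*w)(k-j)$; this is the same computation in different clothing, with the advantage that uniformity in $j$ is automatic rather than tracked through the truncation. One small inaccuracy in your closing aside: Jaffard's inverse-closedness results in \cite{sj90} are applied in the paper's second proof directly to the invertible Toeplitz matrix $T_\phi$, yielding $T_\phi^{-1}\in Q_\alpha$; they do not by themselves give the off-diagonal decay of the factor $G$, which is not exhibited as the inverse of a matrix already known to have such decay. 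This does not affect your main argument.
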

%

Below, we will provide two proofs for this theorem. The first proof is constructive, using the explicit 
form (\ref{eq:sc-symbol}) of the semi-cardinal symbols, which is based on the Wiener-Hopf 
factorization (\ref{eq:wh}). As in the cardinal interpolation case, our arguments 
for exponential decay rely on analyticity. However, for the transfer of algebraic 
decay we need the next Wiener-L\'{e}vy lemma, specialized to the Jaffard algebra $J_\alpha$, 
which follows from a more general result of Sun \cite{sun07}.

\begin{lemma}      \label{le:wiener-levy}
If $\psi\in J_\alpha$, for $\alpha>d$, and $F$ is analytic in a neighborhood of the range of 
$\psi$, then $F\circ\psi\in J_\alpha$.
\end{lemma}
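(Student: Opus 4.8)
The plan is to realize $J_\alpha$ as a commutative unital Banach algebra that is continuously and inverse-closedly embedded in $C(T^d)$, and then to read off $F\circ\psi\in J_\alpha$ from the Riesz--Dunford holomorphic functional calculus. The two facts already at our disposal are precisely what make this work: Lemma~\ref{le:DCP}(i) supplies the submultiplicative product estimate needed for the Banach-algebra structure, while Lemma~\ref{le:wiener}(i) supplies the inverse-closedness that identifies the spectrum of each element of $J_\alpha$ with its range on $T^d$.

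First I would equip $J_\alpha$ with the norm $\|u\|_\sharp := \|u\|_W + \sup_{k\in\mathbb{Z}^d}(1+\|k\|)^\alpha|\widehat{u}_k|$. Since $\alpha>d$, the series $\sum_k(1+\|k\|)^{-\alpha}$ converges, so $J_\alpha\subset W\subset C(T^d)$ with $\|u\|_W\le c_{\alpha,d}\,\|u\|_\sharp$; thus every element of $J_\alpha$ is a genuine continuous function on $T^d$ and point evaluation $u\mapsto u(\zeta)$ is a bounded functional. Completeness of $(J_\alpha,\|\cdot\|_\sharp)$ follows by extracting the coefficientwise limit of a Cauchy sequence and checking that the weighted suprema pass to the limit. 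Submultiplicativity up to a constant is immediate from Lemma~\ref{le:DCP}(i): the Wiener part is submultiplicative, while the weighted-sup part obeys $C(uv)\le 2^\alpha\|u\|_\sharp\|v\|_\sharp$, whence $\|uv\|_\sharp\le(1+2^\alpha)\|u\|_\sharp\|v\|_\sharp$; rescaling $\|\cdot\|_\sharp$ by the factor $1+2^\alpha$ yields a genuine Banach-algebra norm, with the constant function $1$ as unit.

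With this structure in place, let $K:=\psi(T^d)$, a compact subset of $\mathbb{C}$, and let $U$ be the neighborhood of $K$ on which $F$ is analytic. Choose a finite cycle $\Gamma\subset U\setminus K$ winding once around each point of $K$ and not around points outside $U$. For every $\lambda\in\Gamma$ the function $\lambda-\psi$ has no zero on $T^d$ (as $\lambda\notin K$), so Lemma~\ref{le:wiener}(i) gives $(\lambda-\psi)^{-1}=1/(\lambda-\psi)\in J_\alpha$; that is, $\lambda-\psi$ is invertible in the Banach algebra $J_\alpha$. General Banach-algebra theory then makes $\lambda\mapsto(\lambda-\psi)^{-1}$ norm-continuous (indeed analytic) from $\mathbb{C}\setminus K$ into $J_\alpha$, hence bounded on the compact set $\Gamma$. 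Therefore the Bochner integral
\[
g:=\frac{1}{2\pi i}\oint_\Gamma F(\lambda)(\lambda-\psi)^{-1}\,d\lambda
\]
converges in $J_\alpha$ (using completeness) and defines an element $g\in J_\alpha$. Finally I would identify $g$ with the pointwise composition: since point evaluation is a bounded functional it commutes with the integral, so for fixed $\zeta\in T^d$, Cauchy's integral formula gives $g(\zeta)=\frac{1}{2\pi i}\oint_\Gamma F(\lambda)(\lambda-\psi(\zeta))^{-1}\,d\lambda=F(\psi(\zeta))$, whence $g=F\circ\psi$ and thus $F\circ\psi\in J_\alpha$.

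The step I expect to be the main obstacle is the second one: certifying the Banach-algebra axioms for $J_\alpha$ and, above all, using inverse-closedness to guarantee that the spectrum of $\psi$ inside $J_\alpha$ coincides with the range $\psi(T^d)$. This coincidence is exactly what permits $F$, assumed analytic only on a neighborhood of the range, to be admitted into the functional calculus; were the $J_\alpha$-spectrum strictly larger than $K$, the contour construction would break down. Once the invertibility of $\lambda-\psi$ in $J_\alpha$ is secured by Lemma~\ref{le:wiener}(i), the remaining continuity, convergence, and identification steps are the standard holomorphic functional calculus and require no further estimates.
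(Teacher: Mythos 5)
Your proof is correct, but it takes a genuinely different route from the paper. The paper disposes of this lemma by citation: it invokes Sun's general Wiener--L\'{e}vy theorem for the Schur-type matrix algebras $\mathcal{A}_{p,w}(X,\rho,\mu)$ (\cite[Theorem~5.3]{sun07}), specializes to $X=\mathbb{Z}^d$, $p=\infty$, and the polynomial weight $w_\alpha=(1+\|\cdot\|)^\alpha$ so that the algebra becomes Jaffard's $Q_\alpha$, checks the $(p,2)$-admissibility of the weight, and then transfers the matrix statement to symbols via the Laurent-matrix correspondence. You instead give a self-contained Riesz--Dunford argument: Lemma~\ref{le:DCP}(i) makes $(J_\alpha,\|\cdot\|_\sharp)$ a unital commutative Banach algebra continuously embedded in $C(T^d)$, Lemma~\ref{le:wiener}(i) forces the $J_\alpha$-spectrum of $\psi$ to coincide with its range $\psi(T^d)$, and the holomorphic functional calculus then manufactures $F\circ\psi$ inside $J_\alpha$. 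Your identification of inverse-closedness as the decisive hypothesis is exactly right --- it is the same mechanism that underlies Sun's theorem, so in effect you are reproving the special case the paper needs rather than citing the general result. What the paper's route buys is brevity and a statement that extends verbatim to non-commutative matrix algebras over spaces of homogeneous type; what your route buys is transparency and independence from \cite{sun07}, at the cost of having to certify the Banach-algebra axioms (your completeness and submultiplicativity checks are routine and correct; the constant in $\|uv\|_\sharp\leq(1+2^{\alpha+1})\|u\|_\sharp\|v\|_\sharp$ differs trivially from the one you wrote, and renorming absorbs it either way). One point worth making explicit if you write this up: Lemma~\ref{le:wiener}(i) must be applied to the complex-valued symbols $\lambda-\psi$, which is legitimate because Definition~\ref{def:symbol-class} and the Jaffard/Bacchelli results behind part (i) of that lemma do not require real-valuedness --- only non-vanishing on $T^d$.
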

%
\begin{proof}
We will use the correspondence between symbols 
in $J_\alpha$ and Laurent matrices that have off-diagonal decay of power $\alpha$. 
The relevant Wiener-L\'{e}vy result proved by Sun \cite[Theorem~5.3]{sun07} applies to 
a general Schur type algebra $\mathcal{A}_{p,w} (X,\rho, \mu)$ of infinite matrices, 
where $(X,\rho, \mu)$ refers to a so-called `space of homogeneous type', $p\in[1,\infty]$, 
and $w$ is a weight satisfying a certain `admissibility' assumption.

In our case, the index set $X:=\mathbb{Z}^d$ is equipped with the usual Euclidean distance 
$\rho$ and the usual counting measure $\mu$. Also, we need to select $p=\infty$ and the 
polynomial weight $w:=w_\alpha= (1+\|\cdot\|)^{\alpha}$, for $\alpha>d$. For these choices, 
the Schur algebra $\mathcal{A}_{p,w}$ coincides, in the notation of \cite{sj90}, with 
$Q_{\alpha}$, the Jaffard algebra of matrices with polynomial off-diagonal decay of 
power $\alpha$. Note that the technical assumption that the weight $w$ should be 
$(p,2)$-admissible is satisfied by the polynomial weight for $\alpha>d$, as shown in 
\cite[Example~A.2]{sun07}.

Now, the stated Wiener-L\'{e}vy lemma for $J_\alpha$ follows from Sun's result for 
$Q_{\alpha}$, since the range of a continuous symbol $\psi$ coincides with the 
spectrum of its associated Laurent matrix $L_\psi$, and the Laurent matrix of a product 
of symbols is the product of the Laurent matrices of the symbol factors.
\end{proof}
\medskip

\noindent
\textbf{First proof of Theorem~\ref{th:SCI-decay-symb}.}
\textbf{(i)} 
It follows from the proof of Theorem~\ref{th:CI-decay-poly} that the inverse symbol 
$\omega = 1 / \sigma\in J_{\alpha}$, therefore by Lemma~\ref{le:wiener-levy} we obtain 
that $\log \omega \in J_{\alpha}$, as well. In turn, this implies that the function $\Lambda_+$ 
defined by (\ref{eq:lambda+}) or (\ref{eq:lambda++}) satisfies $\Lambda_+ \in J_{\alpha}$. 
A new application of Lemma~\ref{le:wiener-levy}, with $F:=\exp$, shows that 
$\omega_+ = \exp (\Lambda_+) \in J_{\alpha}$, hence its Fourier coefficients satisfy 
(\ref{eq:om+coeffs-poly}), as stated.

Moving to the decay of the Fourier coefficients of $\omega_j$, we first let 
$H=\mathbb{Z}^d_{\preceq,+}$. Substituting the expression (\ref{eq:twisted-trunc}) of the 
second factor of $\omega_j$ in (\ref{eq:sc-symbol}), implies
\begin{equation}
\zeta^{-j} \omega_j (\zeta) = \omega_{+} (\zeta) \omega_{-}^{(j)} (\zeta) .
\label{eq:sc-symb-fact}
\end{equation}
The left-hand side of this equation has the Fourier expansion
\begin{equation}
\zeta^{-j} \omega_j (\zeta) = \sum_{k\succeq 0} a_{k,j} \zeta^{k-j} .
\label{eq:sc-symb-shift}
\end{equation}
On the other hand, since $\omega_{-}^{(j)}$ is a truncation of $\omega_{+}(\zeta^{-1})$, the 
Fourier coefficients of both factors of the right-hand side of (\ref{eq:sc-symb-fact}) decay 
according to estimate (\ref{eq:om+coeffs-poly}). By Lemma~\ref{le:DCP}(i), it follows that 
the Fourier coefficients of the product of these two factors decay with the same power $\alpha$, 
hence (\ref{eq:sc-symb-decay-poly}) holds. In addition, due to the uniform estimate 
(\ref{eq:om+tr-bound}), the decay constant of the above product can be bounded, for all $j\in H$, 
by $C_2 := 2^{\alpha+1} C_1 \| \omega_{+} \|_W$.

Turning to the case $H = \mathbb{Z}^{d-1}\!\!\times\!\mathbb{Z}_+$, it is sufficient to replace 
$\omega_{-}^{(j)}$ with the truncation $\omega_{-}^{(j_d)}$ defined by (\ref{eq:om+trunc}).
Then all arguments of the previous paragraph apply, which establishes 
(\ref{eq:sc-symb-decay-poly}) in this case, as well.
\smallskip

\textbf{(ii)}
By reducing $\alpha$ if needed, as in the proof of Theorem~\ref{th:CI-decay-exp}, 
both the cardinal symbol $\sigma(\zeta)$ and its reciprocal $\omega(\zeta)=1/\sigma(\zeta)$, 
can be extended as analytic and sectorial functions of $d$ complex variables in the 
neighborhood $\mathcal{A}^d_\alpha$ of the unit torus $T^d$. Hence $\log \omega (\zeta)$ 
is also analytic and admits an absolutely convergent Laurent-Fourier expansion (\ref{eq:log-omega}) 
for $\zeta\in\mathcal{A}^d_\alpha$, with exponentially decaying coefficients 
$\{\lambda_k\}_{k\in\mathbb{Z}^d}$. It follows that $\Lambda_+$, as defined by 
(\ref{eq:lambda+}) or (\ref{eq:lambda++}), is also analytic on $\mathcal{A}^d_\alpha$, 
and the same property applies to $\omega_+ = \exp (\Lambda_+)$. We deduce that 
$\omega_+$ has an absolutely convergent Laurent-Fourier 
representation on $\mathcal{A}^d_\alpha$, with coefficients indexed by $H$, hence, 
by Cauchy's estimates, (\ref{eq:om+coeffs}) holds for some $\beta\in(0,\alpha)$ and 
$C_1:=\max \{ |\omega_+(\zeta)| : |\zeta_\tau|=e^{\pm\beta}, \tau = 1,\ldots,d\}$. (The arguments 
of this paragraph have previously been used by Goodman et al.\ \cite[Theorem~2.4]{gmrs00} 
in the context of Wiener-Hopf factorizations of symbols of Gram-Laurent matrices).

To obtain the decay of the semi-cardinal symbol coefficients, we first consider the case 
$H=\mathbb{Z}^d_{\preceq,+}$. As in the above proof of \textbf{(i)}, we use (\ref{eq:sc-symb-fact}) 
and (\ref{eq:sc-symb-shift}) to express the shifted semi-cardinal symbol $\zeta^{-j} \omega_j$ 
in two ways. Since $\omega_{-}^{(j)}$ is a truncation of $\omega_{+}(\zeta^{-1})$, 
the Fourier coefficients of both factors on the right-side of (\ref{eq:sc-symb-fact}) 
decay exponentially according to (\ref{eq:om+coeffs}), with constants independent of the 
truncation index $j$. Hence, we may use Lemma~\ref{le:DCP}(ii) to deduce the exponential 
decay (\ref{eq:sc-symb-decay}), with $C_2:=C_1^2  \coth^d ( \frac{\beta-\beta'}{2} )$. 

Next, consider the case $H=\mathbb{Z}^{d-1}\!\!\times\!\mathbb{Z}_+$. Then the above proof 
still applies, provided we replace the truncation $\omega_{-}^{(j)}$ with $\omega_{-}^{(j_d)}$, 
as defined in (\ref{eq:om+trunc}). However, the following arguments show that the rate $\beta'$ 
can be improved to $\beta$ in the last component. Indeed, note that, due to the 
translation-invariance properties (\ref{eq:invariance-c}) and (\ref{eq:invariance-L}), it is sufficient 
to consider the case $j'=0$. Hence, estimating the explicit expression (\ref{eq:sc-coeffs}) 
via (\ref{eq:om+coeffs}) and rearranging, we have
\begin{eqnarray*}
|a_{(0,j_d),(k',k_d)}| & \leq & \sum_{l_d=0}^{\min\{j_d,k_d\}} \sum_{l'\in\mathbb{Z}^{d-1}} 
|\gamma_{(l',j_d-l_d)}|\, |\gamma_{(k'-l',k_d-l_d)}|  \\
& \leq & C_1^2 \prod_{p=1}^{d-1}  \sum_{l_p\in\mathbb{Z}} 
e^{-\beta |l_p|} e^{-\beta |k_p - l_p|}  \\
& \mbox{ } & \quad \times \sum_{l_d=0}^{\min\{j_d,k_d\}} 
e^{-\beta |j_d - l_d|} e^{-\beta |k_d - l_d|} 
\end{eqnarray*}
Now, for each $\beta'\in(0,\beta)$ and $p=1,\ldots,d-1$, the proof of Lemma~\ref{le:DCP}(ii) 
shows that each of the above sums indexed over $l_p$ is of the magnitude $O (e^{-\beta' |k_p|})$, 
with a constant that only depends on $\beta$ and $\beta'$. For the finite sum over $l_d$, it is 
sufficient to consider the case $j_d \leq k_d$, for which 
\begin{equation*}
\sum_{l_d=0}^{j_d} e^{-\beta (j_d - l_d)} e^{-\beta (k_d - l_d)}
\leq e^{-\beta (k_d-j_d)} \sum_{n=0}^{j_d} e^{-\beta n}
< \frac{e^{-\beta (k_d-j_d)}}{1-e^{-\beta}}.
\end{equation*}
This implies the decay (\ref{eq:sc-symb-decay}), with the improved right-hand side exponential 
rate $e^{-\beta' |k'-j'|-\beta |k_d-j_d|}$, as stated. 
\ \rule{0.5em}{0.5em}
\medskip

The second proof we give for Theorem~\ref{th:SCI-decay-symb} uses an indirect operatorial 
argument, applying Jaffard's inverse-closedness results \cite{sj90} to the Toeplitz matrix $T_\phi$ 
of the semi-cardinal interpolation system. This is much shorter than the first proof, but at the same 
time much less transparent, as it does not involve the decay of Fourier coefficients of the 
Wiener-Hopf factor $\omega_+$. Also, it does not capture the decay improvement in 
(\ref{eq:sc-symb-decay}) for $H=\mathbb{Z}^{d-1}\!\!\times\!\mathbb{Z}_+$. 
\medskip

\noindent
\textbf{Second proof of Theorem~\ref{th:SCI-decay-symb}.}
\textbf{(i)} 
Condition (\ref{eq:decay-alg}) implies $\phi(j-k) = O ((1+\|j-k\|)^{-\alpha})$, for all $j,k\in H$, 
hence $T_\phi \in Q_\alpha$, the Jaffard algebra of matrices indexed by $H\times H$ with 
polynomial off-diagonal decay of power $\alpha$. By Corollary~\ref{cor:SCI-operator}, $T_\phi$ 
is also  invertible as a bounded linear operator on $\ell_2 (H)$. Then \cite[Proposition~3]{sj90} 
implies $T_\phi^{-1} \in Q_\alpha$, which is equivalent to (\ref{eq:sc-symb-decay-poly}), since 
$T_\phi^{-1} = [a_{k,j}]_{k,j\in H}$.
\smallskip

\textbf{(ii)} 
If $\phi$ decays exponentially, a similar argument applies 
to prove (\ref{eq:sc-symb-decay}), by replacing $Q_\alpha$ with the corresponding 
algebra of matrices with exponential decay and by using \cite[Proposition~2]{sj90}.
\ \rule{0.5em}{0.5em}
\medskip

\noindent
\textbf{Decay transfer to semi-cardinal Lagrange functions}
%

\begin{theorem}	\label{th:SCI-decay-Lag}
Suppose $\phi$ satisfies the hypotheses of \emph{Theorem~\ref{th:E&U-CI}}, and 
$H$ is a half-space lattice.

\textbf{\emph{(i) Algebraic decay.}} If $\phi$ has algebraic decay with power $\alpha>d$, 
i.e.\ \emph{(\ref{eq:decay-alg})} holds, then the Lagrange functions $\chi_j$ decay with the 
same power about $j$, that is, there exists a constant $C_{3}=C_{3}(\alpha,d,\phi)$, such that 
\begin{equation}
| \chi_j ( x ) | \leq  C_3 (1+\|x-j\|)^{-\alpha} 
\quad   x\in \mathbb{R}^d,\  j\in H.
\label{eq:SCI-Lagr-decay-poly}
\end{equation}

\textbf{\emph{(ii) Exponential decay.}} If $\phi$ decays exponentially with rate $\alpha>0$,   
i.e.\ \emph{(\ref{eq:decay-exp})} holds, and $\beta\in(0,\alpha)$ is the rate of decay of the 
Fourier coefficients of $\omega_+$, according to \emph{(\ref{eq:om+coeffs})}, then, 
for each $\beta'\in(0,\beta)$, there exists a constant $C_{3}=C_{3}(\beta,\beta',\phi,H,d)$, 
such that
\begin{equation}
\vert \chi_j ( x ) \vert \leq C_3 \, e^{-\beta' |x-j|} , \quad   x\in \mathbb{R}^d,\  j\in H.
\label{eq:SCI-Lagr-decay}
\end{equation}
Further, if $H=\mathbb{Z}^{d-1}\times\mathbb{Z}_{+}$, the exponential  
in the last inequality can be replaced by $e^{-\beta' |x'-j'|-\beta |x_d-j_d|}$.
\end{theorem}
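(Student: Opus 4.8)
The plan is to estimate the defining series (\ref{eq:SCI-lagrange}) directly, combining the decay of the coefficients $a_{k,j}$ already established in Theorem~\ref{th:SCI-decay-symb} with the assumed decay of the kernel $\phi$, and to reduce the resulting semi-discrete convolution to the cardinal estimates proved in Theorems~\ref{th:CI-decay-poly} and~\ref{th:CI-decay-exp}. The decisive device for securing $j$-uniform constants is the index shift $m=k-j$: since $a_{k,j}$ decays about $k=j$ and $\phi(x-k)$ about $k=x$, this substitution recentres the whole sum at $x-j$, and enlarging the resulting index set $H-j$ to all of $\mathbb{Z}^d$ (legitimate because every summand is nonnegative) eliminates the dependence of the bound on $j$.

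For part (i), I would start from
\[
|\chi_j(x)| \le \sum_{k\in H} |a_{k,j}|\,|\phi(x-k)|
\le C_2 C_0 \sum_{k\in H} (1+\|k-j\|)^{-\alpha}(1+\|x-k\|)^{-\alpha},
\]
using (\ref{eq:sc-symb-decay-poly}) and (\ref{eq:decay-alg}). Substituting $m=k-j$ and writing $y:=x-j$ bounds this above by
\[
C_2 C_0 \sum_{m\in\mathbb{Z}^d} (1+\|m\|)^{-\alpha}(1+\|y-m\|)^{-\alpha},
\]
which is precisely the discrete convolution shown, in the proof of Theorem~\ref{th:CI-decay-poly} (split at $\|m\|\le\|y\|/2$), to be $O((1+\|y\|)^{-\alpha})$ with a constant depending only on $\alpha$ and $d$. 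This yields (\ref{eq:SCI-Lagr-decay-poly}) with $C_3=C_2 C_0\,C(\alpha,d)$, independent of $j$.

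Part (ii) follows the same route with the exponential estimates. Using (\ref{eq:sc-symb-decay}) and $|\phi(x)|\le C_0 e^{-\alpha|x|}$, the shift $m=k-j$, $y=x-j$ dominates the series by $C_2 C_0\sum_{m\in\mathbb{Z}^d} e^{-\beta'|m|}e^{-\alpha|y-m|}$, which factorizes over coordinates and, since $\beta'<\alpha$, is bounded by $K^d e^{-\beta'|y|}$ through the single-variable estimate of the proof of Theorem~\ref{th:CI-decay-exp}; this gives (\ref{eq:SCI-Lagr-decay}). For $H=\mathbb{Z}^{d-1}\times\mathbb{Z}_+$, I would instead invoke the anisotropic coefficient bound $|a_{k,j}|\le C_2 e^{-\beta'|k'-j'|-\beta|k_d-j_d|}$ from Theorem~\ref{th:SCI-decay-symb}(ii). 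The factorized product then separates into $d-1$ transverse sums at rate $\beta'$ against the kernel rate $\alpha$ and one last-coordinate sum at rate $\beta$ against $\alpha$; because both $\beta'<\alpha$ and $\beta<\alpha$, each factor retains its own rate without loss, producing the sharper envelope $e^{-\beta'|x'-j'|-\beta|x_d-j_d|}$.

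The only genuine obstacle is bookkeeping rather than substance: one must verify that the shifted summation set $H-j$ may be enlarged to $\mathbb{Z}^d$ uniformly in $j$ (which is where $j$-independence of $C_3$ comes from), and that the product form of the exponential majorant $e^{-\alpha|x|}=\prod_p e^{-\alpha|x_p|}$ suffices to decouple the last-coordinate rate from the transverse rates in the anisotropic case, even though $\phi$ itself need not be a tensor product.
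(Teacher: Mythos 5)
Your proposal is correct and follows essentially the same route as the paper: bound the kernel series for $\chi_j$ by the product of the coefficient decay from Theorem~\ref{th:SCI-decay-symb} and the kernel decay, enlarge the index set from $H$ to $\mathbb{Z}^d$, recentre at $x-j$, and invoke the cardinal convolution estimates from Theorems~\ref{th:CI-decay-poly} and~\ref{th:CI-decay-exp}, with the anisotropic refinement for $H=\mathbb{Z}^{d-1}\times\mathbb{Z}_+$ handled coordinatewise exactly as you describe.
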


\begin{proof}
\textbf{(i)}
For $j\in H$, we use the kernel representation (\ref{eq:SCI-lagrange}) of $\chi_j$, 
along with (\ref{eq:decay-alg}) and (\ref{eq:sc-symb-decay-poly}), to estimate
\begin{eqnarray*}
|\chi_j (x)| & \leq & \sum_{k\in H} |a_{k,j}|\, |\phi (x-k)|  \\
& \leq & C_2 C_0 \sum_{k\in H}  (1+\|k-j\|)^{-\alpha} (1+\|x-k\|)^{-\alpha}.
\label{eq:SCI-RHS-poly}
\end{eqnarray*}
The last sum admits the following expression as upper bound:
\begin{eqnarray*}
\sum_{k\in \mathbb{Z}^{d}}  (1+\|k-j\|)^{-\alpha} (1+\|x-k\|)^{-\alpha}  
& = & \sum_{k\in \mathbb{Z}^{d}}  (1+\|k\|)^{-\alpha} (1+\|x-j-k\|)^{-\alpha}  \\
& \leq &  C(\alpha,d)  (1+\|x-j\|)^{-\alpha},
\end{eqnarray*}
where the last inequality follows by replacing $x$ with $x-j$ in the estimate of the sum 
(\ref{eq:CI-RHS-poly}) obtained, for the cardinal case, in the proof of 
Theorem~\ref{th:CI-decay-poly}. This establishes (\ref{eq:SCI-Lagr-decay-poly}).

\textbf{(ii)} 
The corresponding estimate (\ref{eq:SCI-Lagr-decay}) follows from the decay properties 
(\ref{eq:decay-exp}), (\ref{eq:sc-symb-decay}), and the representation (\ref{eq:SCI-lagrange}):
\begin{eqnarray*}
|\chi_j (x)| & \leq & \sum_{k\in H} |a_{k,j}|\, |\phi (x-k)|  \ 
\leq\  C_2 C_0 \sum_{k\in H} e^{-\beta' |k-j|} e^{-\alpha |x-k|}  \\
& \leq & C_2 C_0 \sum_{k\in \mathbb{Z}^d} e^{-\beta' |k-j|} e^{-\alpha |x-k|} \ 
=\ C_2 C_0 \sum_{k\in \mathbb{Z}^d} e^{-\beta' |k|} e^{-\alpha |x-j-k|}  \\
& \leq & C_3\, e^{-\beta' |x-j|}, \quad   x\in \mathbb{R}^d,\  j\in H.
\end{eqnarray*}
The last inequality is a consequence of the estimate obtained for the cardinal case in the proof 
of Theorem~\ref{th:CI-decay-exp}, with $C_3:=C_2 C_0 \coth^d ( \frac{\alpha-\beta'}{2} )$. 

It is now straightforward to see how the improved decay property for the coefficients 
$a_{k,j}$ carries over to the Lagrange functions $\chi_j$, making some necessary 
changes in the last display.
\end{proof}
\medskip

\noindent
\textbf{Remark.} As in the case of cardinal interpolation, Theorems~\ref{th:SCI-decay-symb} 
and \ref{th:SCI-decay-Lag} can be adapted to kernels $\phi$ with anisotropic algebraic decay, 
based on the version of Wiener's lemma given in Gr\"{o}chenig and Klotz \cite[Theorem~1.1]{gk10} 
for matrices indexed on $H\times H$. Similarly, the weighted $\ell_1$-condition 
(\ref{eq:weighted-decay}) is transferred to the inverse Toeplitz matrix $T_\phi^{-1}$ via the 
inverse-closed weighted Schur algebra $\mathcal{A}^1_w$ treated by Gr\"{o}chenig and 
Leinert \cite{gl06} (Remark~2 after Corollary~7) and by Sun \cite[Theorem~4.1]{sun07}. 
The decay induced on the semi-cardinal Lagrange functions 
is then expressed as finiteness of a weighted Lebesgue constant: 
\begin{equation*}
\sup_{x\in\mathbb{R}^d} \sum_{j\in H} | \chi_j (x) | w(j) < \infty.
\end{equation*}
%

\section{Further properties}
\addtocounter{equation}{-33}

This section deals with two topics: first, we show that the Lagrange function for 
cardinal interpolation can be approximated with shifts of semi-cardinal Lagrange 
functions; secondly, we obtain the variational characterization of the cardinal and 
semi-cardinal Lagrange functions and their associated schemes.

\subsection{Convergence of semi-cardinal interpolation to cardinal interpolation}

Let $H$ be a half-plane lattice as in section~3, and assume that $\phi$ 
satisfies the hypotheses of Theorem~\ref{th:E&U-CI}. Since any function of the 
form (\ref{eq:SCI-repres}) satisfies the reproduction formula (\ref{eq:CI-reprod}) 
on the cardinal grid $\mathbb{Z}^d$, it follows that, for each $j\in H$, the 
Lagrange function $\chi_j$ of the corresponding semi-cardinal interpolation scheme 
on $H$ admits the representation:
\begin{equation*}
\chi_j ( x ) = \chi (x-j) + \sum_{k\in\mathbb{Z}^{d}\setminus H} \chi_j (k) \chi ( x-k ) ,
\quad x\in\mathbb{R}^d.
\end{equation*}
This identity suggests already that the difference $\chi_j - \chi(\cdot - j)$ may be 
made small as $j$ becomes suitably large. To establish such a property, we employ 
a different representation, which is a more natural tool for the task.

We start by defining, for both types of half-space lattices, the function
\begin{equation}
\eta ( x ) := \sum_{k\in H} \gamma_k\, \phi ( x-k ) ,
\quad x\in\mathbb{R}^d,
\label{eq:psi-def}
\end{equation}
where $\{\gamma_k\}_{k\in H}$ is the sequence of Fourier coefficients of the 
Wiener-Hopf factor $\omega_+$ appearing in (\ref{eq:omega+})--(\ref{eq:om+Fourier}).
Since $\phi$ is bounded and $\{\gamma_k\}\in\ell_1(H)$, we deduce that 
(\ref{eq:psi-def}) is an absolutely and uniformly convergent series, hence $\eta$ is 
continuous and bounded on $\mathbb{R}^d$. Moreover, in analogy with the 
decay properties of $\chi$ established in section~2, we see that not only 
(\ref{eq:minimal}) transfers from $\phi$ to $\eta$, but also any additional algebraic 
or exponential decay that may apply to $\phi$.
\medskip

\noindent
\textbf{Remark.} 
The above function $\eta$ bears a close relationship with the so-called 
`limiting profile' studied by Goodman et al.\ \cite{gmrs00} in the different 
context of infinite Gram matrices generated by shifts of multivariate kernels.
%

\begin{proposition}		\label{prop:psi-rep}
Let $\phi$ satisfy the hypotheses of \emph{Theorem~\ref{th:E&U-CI}}.
For both types of half-space lattices $H$, the following representation holds:
\begin{equation}
\chi (x) = \sum_{l\in H} \gamma_l\, \eta ( x+l )  , \quad x\in\mathbb{R}^d.
\label{eq:psi-rep-chi} 
\end{equation}
Further, if $H=\mathbb{Z}^d_{\preceq,+}$ and $j\in H$, 
\begin{equation}
\chi_j (x) = \sum_{0\preceq l\preceq j} \gamma_{j-l}\, \eta ( x-l ),  \quad x\in\mathbb{R}^d,
\label{eq:psi-rep-chij}
\end{equation}
while, for $H=\mathbb{Z}^{d-1}\times\mathbb{Z}_+$ and $j=(j',j_d)\in H$, we have
\begin{equation}
\chi_j (x) = \sum_{l_d = 0}^{j_d} \sum_{l'\in\mathbb{Z}^{d-1}} \gamma_{(j'-l',j_d-l_d)}\, 
\eta ( x-l ),  \quad x\in\mathbb{R}^d.
\label{eq:psi-rep-chij-bis}
\end{equation}
\end{proposition}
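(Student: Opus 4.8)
The plan is to work entirely at the level of coefficient sequences, using the fact that for any $c=\{c_k\}_{k\in\mathbb{Z}^d}\in\ell_1$ the function $f_c(x):=\sum_{k}c_k\phi(x-k)$ is well defined and absolutely and uniformly convergent, by the same argument as for (\ref{eq:SCI-lagrange}). The elementary but decisive observation is that shifting $f_c$ shifts its coefficient sequence: setting $m=k-l$ gives $f_c(x+l)=\sum_m c_{m+l}\phi(x-m)$. Since $\eta=f_\gamma$ with $\{\gamma_k\}$ supported on $H$ and lying in $\ell_1(H)$, each term $\eta(x\pm l)$ is again a $\phi$-series whose coefficients are shifts of $\{\gamma_k\}$, so every right-hand side in (\ref{eq:psi-rep-chi})--(\ref{eq:psi-rep-chij-bis}) is itself a $\phi$-series, and it suffices to identify its coefficient sequence. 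Equivalently, and more transparently, the map $c\mapsto f_c$ intertwines the shift $f_c(\cdot+l)$ with multiplication of the symbol $\sum_k c_k\zeta^k$ by $\zeta^{-l}$, which reduces each identity to a symbol identity.

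First I would establish (\ref{eq:psi-rep-chi}). After substituting $\eta=f_\gamma$ and reindexing, the coefficient of $\phi(x-m)$ in $\sum_{l\in H}\gamma_l\,\eta(x+l)$ is $\sum_{l\in H,\,l+m\in H}\gamma_l\gamma_{l+m}$, which is exactly the $m$-th Fourier coefficient $a_m$ of $\omega_+(\zeta)\omega_+(\zeta^{-1})=\omega(\zeta)$ supplied by the Wiener-Hopf factorization (\ref{eq:wh}). Hence $\sum_{l\in H}\gamma_l\,\eta(x+l)=\sum_m a_m\phi(x-m)=\chi(x)$ by (\ref{eq:CI-lagrange}). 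In symbol form, $\sum_{l\in H}\gamma_l\,\eta(\cdot+l)$ corresponds to $\omega_+(\zeta)\sum_{l\in H}\gamma_l\zeta^{-l}=\omega_+(\zeta)\omega_+(\zeta^{-1})=\omega(\zeta)$, the symbol of $\chi$.

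For (\ref{eq:psi-rep-chij}) and (\ref{eq:psi-rep-chij-bis}) I would run the same symbol calculation against the product form of the semi-cardinal symbol. Using $\eta(\cdot-l)\leftrightarrow\zeta^l\omega_+(\zeta)$ and the substitution $m=j-l$, the right-hand side of (\ref{eq:psi-rep-chij}) corresponds to $\omega_+(\zeta)\sum_{0\preceq l\preceq j}\gamma_{j-l}\zeta^l=\omega_+(\zeta)\,\zeta^j\sum_{0\preceq m\preceq j}\gamma_m\zeta^{-m}=\omega_+(\zeta)\,\zeta^j\omega_{-}^{(j)}(\zeta)$, which by (\ref{eq:twisted-trunc}) and (\ref{eq:sc-symbol}) is precisely $\omega_j(\zeta)$, the symbol of $\chi_j$. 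The case $H=\mathbb{Z}^{d-1}\times\mathbb{Z}_+$ is identical once $(\zeta')^{j'}$ is factored out via (\ref{eq:sc-symm}): after the same reindexing the double sum in (\ref{eq:psi-rep-chij-bis}) reassembles into $(\zeta')^{j'}\zeta_d^{j_d}\omega_{-}^{(j_d)}(\zeta)$, whose associated symbol is $(\zeta')^{j'}\omega_{(0,j_d)}(\zeta)=\omega_j(\zeta)$. Alternatively, one may avoid symbols and match the coefficient of $\phi(x-m)$ directly against the explicit expressions (\ref{eq:sc-coeffs+}) and (\ref{eq:sc-coeffs}) for $a_{m,j}$ from Proposition~\ref{le:sc-symbol}, checking that the combined constraints collapse the inner summation to $0\preceq l\preceq\min\{j,m\}$ (respectively $0\leq l_d\leq\min\{j_d,m_d\}$).

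The routine part is the justification of every interchange of summation, which follows from absolute convergence: one bounds $\sum_{l}|\gamma_l|\sum_{k}|\gamma_k|\,|\phi(x+l-k)|\leq\|\gamma\|_1\,\|\gamma\|_\infty\,|\phi|_\infty<\infty$, using $\sum_{m}|\phi(x-m)|\leq|\phi|_\infty$ and $\{\gamma_k\}\in\ell_1(H)$. I expect the only genuine bookkeeping difficulty to be the correct handling of half-space membership under reindexing, namely verifying that the coefficient of $\phi(x-m)$ vanishes for $m\in\mathbb{Z}^d\setminus H$; this is where the two lattice types differ and where the truncation $\omega_{-}^{(j)}$ (resp. $\omega_{-}^{(j_d)}$) enters, but once the symbol identities are in place it reduces to the observation that the stated summation range over $l$ is empty exactly when $m\notin H$.
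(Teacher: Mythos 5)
Your proposal is correct and follows essentially the same route as the paper: both arguments reduce each identity to matching the coefficient sequence of the right-hand side, viewed as a $\phi$-series, against $\{a_k\}$ (via the factorization (\ref{eq:wh})) or $\{a_{k,j}\}$ (via (\ref{eq:sc-symbol}), (\ref{eq:twisted-trunc}), or equivalently the explicit formulas (\ref{eq:sc-coeffs+}) and (\ref{eq:sc-coeffs})), with absolute convergence justifying the rearrangements. The paper simply performs the double-sum rearrangement on the coefficients directly rather than phrasing it as a symbol identity, and your closing ``alternative'' of matching against (\ref{eq:sc-coeffs+}) and (\ref{eq:sc-coeffs}) is precisely the paper's proof.
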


\begin{proof}
For any half-space lattice $H$, we extend $\{\gamma_k\}_{k\in H}$ to a sequence 
indexed on $\mathbb{Z}^{d}$ by setting $\gamma_k := 0$ for $k\not\in H$. With this 
convention, (\ref{eq:wh}) and (\ref{eq:om+Fourier}) imply
\begin{equation*}
a_k = \sum_{l\in H} \gamma_{k+l} \gamma_{l}, \quad k\in \mathbb{Z}^{d}.
\end{equation*}
Substituting $a_k$ in the kernel representation (\ref{eq:CI-lagrange}) of $\chi$ and rearranging,
\begin{eqnarray*}
\chi (x) 
& = & \sum_{k\in\mathbb{Z}^{d}} \left( \sum_{l\in H} \gamma_{k+l} \gamma_{l} \right) \phi (x-k)  \\
& = & \sum_{l\in H} \gamma_l \sum_{k+l\in H} \gamma_{k+l}\,  \phi (x-k)  \\
& = & \sum_{l\in H} \gamma_l \sum_{k\in H} \gamma_{k}\,  \phi (x+l-k) ,
\end{eqnarray*}
which provides (\ref{eq:psi-rep-chi}), since the sum of the last series is $\eta ( x+l )$. 

Next, for $H=\mathbb{Z}^d_{\preceq,+}$, identity (\ref{eq:psi-rep-chij}) is obtained by 
substituting the explicit formula (\ref{eq:sc-coeffs+}) for $a_{k,j}$ into the kernel 
representation (\ref{eq:SCI-lagrange}) of $\chi_j$:
\begin{eqnarray*}
\chi_j (x) 
& = & \sum_{k\in H} \left( \sum_{0\preceq l\preceq j} \gamma_{k-l} \gamma_{j-l} \right) \phi (x-k)  \\
& = & \sum_{0\preceq l\preceq j} \gamma_{j-l} \sum_{k\in H} \gamma_{k-l}\,  \phi (x-k)  \\
& = & \sum_{0\preceq l\preceq j} \gamma_{j-l} \sum_{k\in H} \gamma_{k}\,  \phi (x-l-k) .
\end{eqnarray*}
Also, (\ref{eq:psi-rep-chij-bis}) follows in a similar way if $H=\mathbb{Z}^{d-1}\times\mathbb{Z}_+$, 
via re-arrangements with respect to the last coordinate of the summation index.
\end{proof}

\bigskip

The next result establishes the uniform convergence of semi-cardinal Lagrange 
functions $\chi_j(\cdot + j)$ to $\chi$. Note that, if $H=\mathbb{Z}^{d-1}\times\mathbb{Z}_+$ 
and $j=(j',j_d)\in H$, then, by (\ref{eq:invariance-L}), 
$ \chi_{j} (x+j) = \chi_{(0,j_d)} (x',x_d+j_d) $, which is independent of $j'$.

\begin{theorem}      \label{th:SCI-to-CI}
Assume that $\phi$ satisfies the hypotheses of \emph{Theorem~\ref{th:E&U-CI}}.

\emph{(i)} If $H=\mathbb{Z}^d_{\preceq,+}$, define the nested sequence 
$\{H_n\}_{n=0}^\infty$ of finite sets
\begin{equation*}
H_n := \{ k\in H : \|k\|_2 \leq n \},
\end{equation*}
and, for each $n\geq 0$, let $j^{(n)}$ denote the maximum element of $H_n$ with 
respect to the order relation $\preceq$. Then,
\begin{equation}
\lim_{n\to\infty} \sup_{x\in\mathbb{R}^d} \left| \chi ( x ) - 
\chi_{j^{(n)}} \left( x+j^{(n)} \right) \right| = 0.
\label{eq:conv1}
\end{equation}

\emph{(ii)} If $H=\mathbb{Z}^{d-1}\times\mathbb{Z}_+$, then
\begin{equation}
\lim_{j_d\to\infty} \sup_{x\in\mathbb{R}^d} | \chi ( x ) - \chi_{(0,j_d)} (x',x_d+j_d) | = 0.
\label{eq:conv2}
\end{equation}
\end{theorem}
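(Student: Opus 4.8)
The plan is to reduce both convergence statements to tail estimates for the $\ell_1(H)$-summable sequence $\{\gamma_k\}_{k\in H}$ of Fourier coefficients of $\omega_+$, exploiting the $\eta$-representations of Proposition~\ref{prop:psi-rep}. The crux is the observation that, after shifting by $j$, each \emph{truncated} series for $\chi_j$ turns into a \emph{partial sum} of the full series (\ref{eq:psi-rep-chi}) for $\chi$, so that the difference $\chi-\chi_j(\cdot+j)$ becomes a genuine $\ell_1$-tail, controlled uniformly in $x$ by the boundedness of $\eta$ recorded after (\ref{eq:psi-def}).

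For part (i), I would start from (\ref{eq:psi-rep-chij}) and substitute $m=j-l$; since $0\preceq l\preceq j$ is equivalent to $0\preceq m\preceq j$, this yields the clean identity $\chi_j(x+j)=\sum_{0\preceq m\preceq j}\gamma_m\,\eta(x+m)$. Subtracting from (\ref{eq:psi-rep-chi}), the difference collapses to the tail $\chi(x)-\chi_j(x+j)=\sum_{m\in H,\ m\not\preceq j}\gamma_m\,\eta(x+m)$, whence
\begin{equation*}
\sup_{x\in\mathbb{R}^d}\bigl|\chi(x)-\chi_{j}(x+j)\bigr|\leq \|\eta\|_\infty\sum_{m\in H,\ m\not\preceq j}|\gamma_m|.
\end{equation*}
It then remains to show this tail vanishes along $j=j^{(n)}$. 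The key structural fact is that, for any fixed $m\in H$, one has $m\in H_n$ as soon as $n\geq\|m\|_2$; since $j^{(n)}$ is the $\preceq$-maximum of $H_n$, this forces $m\preceq j^{(n)}$ for all large $n$. Consequently, given $\varepsilon>0$, I would choose a finite $F\subset H$ with $\sum_{m\in H\setminus F}|\gamma_m|<\varepsilon$ and observe that, for $n$ exceeding $\max_{m\in F}\|m\|_2$, the residual set $\{m\in H:m\not\preceq j^{(n)}\}$ is contained in $H\setminus F$, which gives (\ref{eq:conv1}).

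Part (ii) is analogous but cleaner, requiring no order-theoretic input. Starting from (\ref{eq:psi-rep-chij-bis}) with $j'=0$ and substituting $p'=-l'$, $p_d=j_d-l_d$, the shifted function becomes $\chi_{(0,j_d)}(x',x_d+j_d)=\sum_{p\in H,\ p_d\leq j_d}\gamma_p\,\eta(x+p)$, again a partial sum of (\ref{eq:psi-rep-chi}). The difference is the tail $\sum_{p\in H,\ p_d>j_d}\gamma_p\,\eta(x+p)$, so that $\sup_x\bigl|\chi(x)-\chi_{(0,j_d)}(x',x_d+j_d)\bigr|\leq\|\eta\|_\infty\sum_{p\in H,\ p_d>j_d}|\gamma_p|$; since $\{\gamma_p\}\in\ell_1(H)$ and the sets $\{p\in H:p_d>j_d\}$ decrease to the empty set as $j_d\to\infty$, the tail vanishes and (\ref{eq:conv2}) follows.

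The only genuinely nontrivial step is the order-theoretic argument in part (i): I must confirm that the $\preceq$-maximum $j^{(n)}$ of the Euclidean ball $H_n$ eventually dominates every fixed index, which rests on the growth of the sets $H_n$ together with the maximality of $j^{(n)}$, rather than on any monotonicity of $\|j^{(n)}\|_2$ itself. Once this is secured, both statements reduce to the elementary fact that $\ell_1(H)$-tails vanish, with the uniform-in-$x$ control supplied entirely by the boundedness of $\eta$; the change-of-variables bookkeeping (checking that $m\mapsto j-m$ permutes $\{l:0\preceq l\preceq j\}$ and identifies the residual index set as $\{m\in H:m\not\preceq j\}$) is routine.
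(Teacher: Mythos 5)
Your proposal is correct and follows essentially the same route as the paper: both rewrite $\chi_j(\cdot+j)$ via the $\eta$-representations of Proposition~\ref{prop:psi-rep} as a partial sum of the series (\ref{eq:psi-rep-chi}) for $\chi$, identify the difference as the $\ell_1$-tail $\sum_{l\succ j}\gamma_l\,\eta(\cdot+l)$ (resp.\ $\sum_{l_d>j_d}$), and bound it uniformly by $\|\eta\|_\infty$ times a vanishing tail of $\sum|\gamma_l|$. Your finite-set $F$ argument for part (i) is just a slight repackaging of the paper's observation that $\{l: l\succ j^{(n)}\}\subset H\setminus H_n$.
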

%
\begin{proof}
(i) If $H=\mathbb{Z}^d_{\preceq,+}$ and $j\in H$, expression (\ref{eq:psi-rep-chij}) implies, 
on re-indexing and comparing with (\ref{eq:psi-rep-chi}):
\begin{eqnarray}
\chi (x) - \chi_j (x+j) 
& = & \chi (x) - \sum_{0\preceq l\preceq j} \gamma_{l}\, \eta ( x+l ) \nonumber \\
& = & \sum_{l\succ j} \gamma_{l}\, \eta ( x+l ),  \quad x\in\mathbb{R}^d.
\label{eq:psi-rep-error}
\end{eqnarray}
Hence, since $\eta$ is bounded on $\mathbb{R}^d$, we obtain the estimate
\begin{equation}
| \chi (x) - \chi_j (x+j) | \leq M \sum_{l\succ j} | \gamma_{l} | ,  \quad x\in\mathbb{R}^d,
\label{eq:psi-rep-est}
\end{equation}
where $M:=\sup_{x\in\mathbb{R}^d} |\eta(x)|$. The result follows from this, noting that 
the definition of $j^{(n)}$ implies $\{ l\in\mathbb{Z}^d : l \succ j^{(n)} \} \subset H\setminus H_n$, 
and that
\begin{equation*}
\lim_{n\to\infty} \sum_{l\in H\setminus H_n} | \gamma_{l} | = 0,
\end{equation*}
since $\{\gamma_l\}_{l\in H} \in\ell_1 (H)$ and $\cup H_n = H$.

(ii) If $H=\mathbb{Z}^{d-1}\times\mathbb{Z}_+$, then (\ref{eq:psi-rep-chij-bis}) leads to a 
representation of $\chi - \chi_j(\cdot + j)$ similar to (\ref{eq:psi-rep-error}), in which the 
summation is taken over all multi-indices $l=(l',l_d)$ with $l'\in\mathbb{Z}^{d-1}$ and 
$l_d>j_d$. Since the absolute summability of $\{\gamma_l\}_{l\in H}$ implies
\begin{equation*}
\lim_{j_d\to\infty} \sum_{l_d>j_d} \sum_{l'\in\mathbb{Z}^{d-1}} | \gamma_{(l',l_d)} | = 0,
\end{equation*}
the conclusion is obtained via the estimate analogous to (\ref{eq:psi-rep-est}).
\end{proof}
\bigskip

\noindent
\textbf{Remarks.} 
(i) The above sequence $\{H_n\}_{n=0}^\infty$ may be replaced by more general nested 
sequences of finite sets with union $H$, as considered in \cite[Proposition~3.3]{vsr03}, 
where such sequences are used in connection with the projection (or finite section) 
method for semi-infinite linear systems. \smallskip

(ii) If the kernel $\phi$ satisfies algebraic or exponential decay, as in 
Theorem~\ref{th:SCI-decay-Lag}, upper bounds on the 
rate of convergence in (\ref{eq:conv1}) and (\ref{eq:conv2}) can be obtained using 
standard tail estimates of the absolutely convergent series $\sum_{k\in H} |\gamma_k|$, 
based on the corresponding decay of the coefficients $\gamma_k$. Specifically, the 
uniform error in (\ref{eq:conv2}) is of magnitude $O ((1+j_d)^{d-\alpha})$ if $\phi$ decays 
with algebraic power $\alpha>d$, and of magnitude $O(e^{-\alpha'j_d})$ if $\phi$ decays 
with exponential rate $\alpha>0$, where $\alpha'\in(0,\alpha)$. Similar rates apply to 
(\ref{eq:conv1}), replacing $j_d$ by $n$. \smallskip

(iii) Using Fourier transforms, a result similar to Theorem~\ref{th:SCI-to-CI} was proved in 
\cite[Theorem~8]{ab02}, for biharmonic splines and $H=\mathbb{Z}\!\times\!\mathbb{Z}_+$. 
For the integrable kernels $\phi$ treated here, one can also establish, as in \cite{ab02}, 
the approximation of the cardinal scheme (\ref{eq:CI-scheme}) by a sequence 
of semi-cardinal schemes of the type (\ref{eq:SCI-scheme}), suitably defined on shifted 
lattices $-k+H$, for $k\in H$. 
%

\subsection{Native space variational property}

The final result of the paper assumes that, in addition to the hypotheses of 
Theorem~\ref{th:E&U-CI}, the kernel $\phi$ is a positive definite function. In this case, 
it is known that its nonnegative Fourier transform $\widehat{\phi}$ belongs to 
$L_1 (\mathbb{R}^d)$; see Wendland \cite[Corollary~6.12]{wen05}. Further, by 
\cite[Theorem~10.12]{wen05}, the translation invariant kernel $\phi(\cdot - \cdot)$ is the 
reproducing kernel of the `native space' $\mathcal{N}_{\phi}$ identified with the real 
Hilbert space of functions $f\in L_2 (\mathbb{R}^d) \cap C (\mathbb{R}^d)$ for which 
there exists, say, $\mathcal{Q} f \in L_2 (\mathbb{R}^d)$, such that
\begin{equation}
\widehat{f} = (\widehat{\phi})^{1/2} \mathcal{Q} f .
\label{eq:native-rep}
\end{equation}
The inner product on $\mathcal{N}_{\phi}$ is then defined by
\begin{equation*}
(f,g)_{\phi} = \frac{1}{(2\pi)^d} \int_{\mathrm{supp}\,\widehat{\phi}}
\mathcal{Q} f (t)\, \overline{\mathcal{Q} g (t)} \, dt.
\end{equation*}
%

\begin{theorem}      \label{prop:CI-var}
Let $\phi$ be a positive definite function satisfying the hypotheses of 
\emph{Theorem~\ref{th:E&U-CI}}.

\emph{(i)} The corresponding Lagrange function $\chi$ for cardinal interpolation belongs 
to the native space $\mathcal{N}_{\phi}$ and the following `fundamental identity' holds:
\begin{equation}
(f,\chi)_{\phi} = \sum_{k\in\mathbb{Z}^d} a_k f(k), \quad  f\in\mathcal{N}_{\phi},
\label{eq:CI-fund-id}
\end{equation}
where $\{a_k\}_{k\in\mathbb{Z}^d}$ are the coefficients of the kernel representation 
\emph{(\ref{eq:CI-lagrange})} of $\chi$. Consequently, $\chi$ is the unique function of minimum-norm 
in $\mathcal{N}_{\phi}$ that satisfies the interpolation conditions \emph{(\ref{eq:CI-delta})}.

\emph{(ii)} If $H$ is a half-space lattice in $\mathbb{R}^d$, then, for each $j\in H$, the 
corresponding Lagrange function $\chi_j$ for semi-cardinal interpolation belongs to the native 
space $\mathcal{N}_{\phi}$ and the following `fundamental identity' holds:
\begin{equation*}
(f,\chi_j)_{\phi} = \sum_{k\in H} a_{k,j} f(k), \quad f\in\mathcal{N}_{\phi},
\end{equation*}
where $\{a_{k,j}\}_{k\in H}$ are the coefficients of the kernel representation 
\emph{(\ref{eq:SCI-lagrange})} of $\chi_j$. Hence, $\chi_j$ is the unique function of minimum-norm 
in $\mathcal{N}_{\phi}$ that satisfies the interpolation conditions \emph{(\ref{eq:SCI-delta})}.
\end{theorem}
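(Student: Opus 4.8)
The plan is to reduce both parts to the reproducing property of the native space, namely $(f,\phi(\cdot-y))_\phi=f(y)$ for every $f\in\mathcal N_\phi$ and $y\in\mathbb R^d$. This follows from (\ref{eq:native-rep}) together with the inner-product formula, on noting that the shift $\phi(\cdot-y)$ has $\mathcal Q[\phi(\cdot-y)](t)=e^{-iy\cdot t}(\widehat\phi(t))^{1/2}$, so that $(f,\phi(\cdot-y))_\phi=(2\pi)^{-d}\int\mathcal Q f(t)(\widehat\phi(t))^{1/2}e^{iy\cdot t}\,dt=(2\pi)^{-d}\int\widehat f(t)e^{iy\cdot t}\,dt=f(y)$ by Fourier inversion. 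Were the kernel expansions (\ref{eq:CI-lagrange}) and (\ref{eq:SCI-lagrange}) finite sums, the fundamental identities would be immediate from linearity; thus the two genuine tasks are to verify that the infinite expansions define elements of $\mathcal N_\phi$, and to justify carrying the inner product through the series.

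For membership I would compute the Fourier transform termwise. Since $\{a_k\}\in\ell_1$ (Wiener's lemma, \S2) and $\phi\in L_1$, the series (\ref{eq:CI-lagrange}) converges in $L_1$, so $\widehat\chi(t)=\widehat\phi(t)\,\Omega(t)$ with $\Omega(t):=\sum_k a_k e^{-ik\cdot t}$, the periodic symbol $1/\sigma_\phi$ evaluated at $e^{-it}$, which is bounded by $\|a\|_1$. Moreover $\chi\in L_1\cap L_\infty\subset L_2$ and is continuous (Theorem~\ref{th:E&U-CI}). Hence $\mathcal Q\chi=(\widehat\phi)^{1/2}\Omega$ where $\widehat\phi>0$, and $\|\mathcal Q\chi\|_{L_2}^2=(2\pi)^{-d}\int\widehat\phi\,|\Omega|^2\le(2\pi)^{-d}\|\Omega\|_\infty^2\int\widehat\phi<\infty$, using $\widehat\phi\in L_1$. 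Therefore $\chi\in\mathcal N_\phi$. The identical computation with $\Omega_j(t):=\sum_{k\in H}a_{k,j}e^{-ik\cdot t}$, bounded by $\|\omega_j\|_W$ (finite by the Schur property (\ref{eq:sc-schur})), shows $\chi_j\in\mathcal N_\phi$.

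For the fundamental identity I would substitute $\overline{\mathcal Q\chi}=(\widehat\phi)^{1/2}\,\overline\Omega$ into the inner product and use $\mathcal Q f\,(\widehat\phi)^{1/2}=\widehat f$ to get $(f,\chi)_\phi=(2\pi)^{-d}\int\widehat f(t)\,\overline{\Omega(t)}\,dt$. Expanding $\overline\Omega(t)=\sum_k a_k e^{ik\cdot t}$ and interchanging sum and integral, which is legitimate because $\{a_k\}\in\ell_1$ while $\widehat f\in L_1$ (a Cauchy--Schwarz consequence of $\widehat f=(\widehat\phi)^{1/2}\mathcal Q f$ with $\widehat\phi\in L_1$ and $\mathcal Q f\in L_2$), the inversion formula $f(k)=(2\pi)^{-d}\int\widehat f(t)e^{ik\cdot t}\,dt$ yields (\ref{eq:CI-fund-id}). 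The semi-cardinal identity follows verbatim with $a_{k,j}$ and $\Omega_j$ in place of $a_k$ and $\Omega$. The minimum-norm characterizations are then the standard reproducing-kernel orthogonality argument: writing any competing interpolant as $\chi+h$ (resp.\ $\chi_j+h$), where $h\in\mathcal N_\phi$ vanishes on $\mathbb Z^d$ (resp.\ on $H$), the identity gives $(\chi,h)_\phi=\sum_k a_k h(k)=0$ (resp.\ $\sum_{k\in H}a_{k,j}h(k)=0$), whence $\|\chi+h\|_\phi^2=\|\chi\|_\phi^2+\|h\|_\phi^2$, with equality exactly when $h=0$; this proves both minimality and uniqueness.

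I expect the main obstacle to be the membership step, specifically the bookkeeping that converts the $\ell_1$/$L_1$ convergence of the kernel expansion into the $L_2$ bound on $\mathcal Q\chi$ and correctly identifies $\widehat\chi/(\widehat\phi)^{1/2}$ as a genuine $L_2$ function supported where $\widehat\phi>0$. The essential quantitative inputs are the boundedness of the symbols $1/\sigma_\phi$ and $\omega_j$ (uniform in $j$ via (\ref{eq:sc-schur})) together with $\widehat\phi\in L_1$. Once membership and the attendant $L_1$-bound on $\widehat f$ are secured, the fundamental identity and the variational consequence are routine.
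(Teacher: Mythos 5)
Your proposal is correct and follows essentially the same route as the paper: termwise Fourier transformation of the kernel expansion to get $\widehat{\chi}=\widehat{\phi}\cdot\omega$, membership in $\mathcal{N}_\phi$ via $\mathcal{Q}\chi=(\widehat{\phi})^{1/2}\omega\in L_2$ (using boundedness of the symbol and $\widehat{\phi}\in L_1$, with the Schur property (\ref{eq:sc-schur}) supplying the uniform bound on $\omega_j$ in the semi-cardinal case), then reduction of the inner product to $\int\widehat{f}\,\overline{\omega}$, interchange of sum and integral justified by $\{a_k\}\in\ell_1$ and $\widehat{f}\in L_1$, Fourier inversion, and the standard Pythagorean argument. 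No gaps.
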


\begin{proof}
(i) The continuity of $\chi$ follows from the uniform convergence of its kernel representation 
(\ref{eq:CI-lagrange}). Further, it was shown in \cite[section~2]{jm91} that assumption 
(\ref{eq:minimal}) implies $\phi\in L_p (\mathbb{R}^d)$, for any $p\in[1,\infty]$. In particular, 
$\phi\in L_2 (\mathbb{R}^d)$ implies that (\ref{eq:CI-lagrange}) is an absolutely convergent 
series in $L_2 (\mathbb{R}^d)$, since $a\in\ell_1$ and $\| \chi (\cdot - k) \|_{L_2} = \| \chi \|_{L_2}$, 
for all $k\in\mathbb{Z}^d$. Therefore, the pointwise sum $\chi$ of 
this series is equal almost everywhere to its $L_2$-sum, hence $\chi\in L_2 (\mathbb{R}^d)$.

Similarly, $\phi\in L_1 (\mathbb{R}^d)$ implies that (\ref{eq:CI-lagrange}) is an absolutely 
convergent series in $L_1 (\mathbb{R}^d)$, and we may apply the Fourier transform termwise, 
to obtain
\begin{equation}
\widehat{\chi} (t) = \sum_{k\in \mathbb{Z}^d} a_k [\phi (\cdot-k)]^{\wedge} (t)
= \widehat{\phi} (t) \sum_{k\in \mathbb{Z}^d} a_k e^{-ikt} = \widehat{\phi} (t) \, \omega (t) ,
\quad t\in\mathbb{R}^d,
\label{eq:CI-FT}
\end{equation}
where $\omega=1/\sigma_\phi$. Since the symbol $\omega$ is bounded and 
$\widehat{\phi} \in L_1 (\mathbb{R}^d)$, we can choose
$\mathcal{Q}\chi := (\widehat{\phi})^{1/2} \omega \in L_2 (\mathbb{R}^d)$, hence (\ref{eq:CI-FT}) 
implies $\chi\in\mathcal{N}_{\phi}$.

Next, for any $f\in\mathcal{N}_{\phi}$, the factorization (\ref{eq:native-rep}) implies 
$\widehat{f} \in L_1 (\mathbb{R}^d)$. This fact, together with $f\in C (\mathbb{R}^d)$ and the 
Fourier inversion formula $f=(\widehat{f}\,)^\vee$ valid for $f\in L_2 (\mathbb{R}^d)$ 
(see Stein and Weiss \cite[Theorem~2.4]{sw71}), imply that the inversion formula holds 
everywhere as an identity between continuous functions:
\begin{equation*}
f(x) = \frac{1}{(2\pi)^d} \int_{\mathbb{R}^d} \widehat{f} (t) \, e^{ixt} \, dt,\quad x\in\mathbb{R}^d.
\end{equation*}
In particular, $f$ is bounded on $\mathbb{R}^d$. Since $\{a_k\}_{k\in\mathbb{Z}^d}\in\ell_1$, 
it follows that the series in (\ref{eq:CI-fund-id}) is absolutely convergent and we have
\begin{eqnarray*}
(2\pi)^d \, (f,\chi)_{\phi} 
& = & \int_{\mathrm{supp}\,\widehat{\phi}} \mathcal{Q}f\! (t)\, \overline{\mathcal{Q} \chi (t)} \, dt
\ = \ \int_{\mathrm{supp}\,\widehat{\phi}} \mathcal{Q}f\! (t)\, (\widehat{\phi})^{1/2}\! (t)\, 
\overline{\omega (t)} \, dt   \\
& = & \int_{\mathbb{R}^d} \widehat{f} (t) \, \overline{\omega (t)} \, dt 
\ = \ \sum_{k\in \mathbb{Z}^d} a_k  \int_{\mathbb{R}^d}  \widehat{f}(t)\, e^{ikt} \,dt  \\
& = & (2\pi)^d \sum_{k\in\mathbb{Z}^d} a_k f(k),
\end{eqnarray*}
by permitted interchange of the integration and summation operators in the fourth equality.
The identity (\ref{eq:CI-fund-id}) implies that $\chi$ is orthogonal on any $f\in\mathcal{N}_{\phi}$, 
such that $f(k) = 0$ for all $k\in\mathbb{Z}^d$. A standard Pythagorean-type argument then 
establishes the last assertion of part (i).

(ii) This follows as in the cardinal case (i), by replacing (\ref{eq:CI-FT}) with:
\begin{equation*}
\widehat{\chi_j} (t) = \widehat{\phi} (t)\, \omega_j (-t),\quad t\in\mathbb{R}^d,
\end{equation*}
which is the Fourier transform of $\chi_j$.
\end{proof}
\bigskip

\noindent
\textbf{Remark.} 
A similar proof can be used to show that, for a data sequence $y\in\ell_1$, the cardinal 
interpolant $s=I_\phi y$ given by (\ref{eq:CI-repres}) belongs to the native space 
$\mathcal{N}_{\phi}$, it satisfies a fundamental identity similar to (\ref{eq:CI-fund-id}) 
(with coefficients $\{c_k\}$ replacing $\{a_k\}$), and it is the unique norm-minimizer among 
all functions in $\mathcal{N}_{\phi}$ that satisfy the interpolation conditions (\ref{eq:CI-problem}). 
The result also holds for the semi-cardinal interpolant $s=I^H_\phi y$ of (\ref{eq:SCI-repres}) 
in place of $\chi_j$, if the data sequence $y\in\ell_1 (H)$.
%

\section{Examples}
\addtocounter{equation}{-11}

In this section, we specialize our main results to five classes of integrable 
kernels that have been studied in the cardinal interpolation literature.

\begin{example}[Gaussian] 	\label{ex:Gaussian} 
\textup{For a parameter $c>0$, the Gaussian kernel 
\begin{equation*}
\phi (x) = \exp (-c\|x\|^2), \quad x\in\mathbb{R}^d,
\end{equation*}
obviously verifies the exponential decay condition (\ref{eq:decay-exp}). Since 
\begin{equation*}
\widehat{\phi} (t) = \frac{1}{(4\pi c)^{d/2}} \exp \left( -\frac{\|t\|^2}{4c} \right) > 0, 
\quad t\in\mathbb{R}^d,
\end{equation*}
$\phi$ is positive definite and an application of the Poisson Summation Formula 
shows that the generated cardinal symbol satisfies
\begin{equation*}
\sigma_\phi (e^{it}) = \sum_{l\in\mathbb{Z}^d} \widehat{\phi} (t+2\pi l) > 0.
\end{equation*}
Cardinal interpolation on $\mathbb{Z}^d$ with the Gaussian kernel has been thoroughly 
studied by Riemenschneider and Sivakumar \cite{rs99,rs99b}. However, semi-cardinal 
interpolation has so far been considered only in the univariate case by Baxter and Sivakumar 
\cite[Theorem~2.9]{basi96}, who proved that, for $\tau\in\mathbb{R}$, the shifted 
Gaussian kernel $\phi(\cdot+\tau)$ generates an invertible semi-infinite Toeplitz matrix on 
$\ell_2 (\mathbb{Z}_+)$ if and only if $|\tau|<1/2$.
}

\textup{Our new results in this paper establish the main properties of semi-cardinal interpolation 
with the Gaussian kernel on any half-space lattice of $\mathbb{Z}^d$, including the exponential 
decay of the associated Lagrange functions and of their kernel expansion coefficients 
(Theorems~\ref{th:SCI-decay-symb} and \ref{th:SCI-decay-Lag}). 
}
\end{example}

\begin{example}[Mat\'{e}rn] 	\label{ex:Matern}
\textup{The Mat\'{e}rn kernel is given by 
\begin{equation*}
\phi (x) = \|x\|^{m-\frac{d}{2}} K_{m-\frac{d}{2}} (\|x\|), \quad x\in\mathbb{R}^d, 
\end{equation*}
where $m\in\mathbb{R}$, $m>\frac{d}{2}$, and $K_{m-\frac{d}{2}}$ is a modified Bessel 
function. Since, for $\nu>0$, $K_{\nu} (r)$ is continuous on $(0,\infty)$ and $r^{\nu} K_{\nu} (r)$ 
can be extended by continuity at $r=0$, it follows that $\phi$ is continuous on $\mathbb{R}^d$. 
Also, the asymptotic behavior of $K_{\nu}$ for large argument \cite[Lemma~5.13]{wen05} 
implies $\phi (x) = O ( \|x\|^{m-\frac{d+1}{2}} e^{ -\|x\| } )$, as $\|x\|\to\infty$, so  
the exponential decay condition (\ref{eq:decay-exp}) holds, for some $\alpha\in(0,1)$.
}
 
 \textup{By \cite[Theorem~ 6.13]{wen05}, we have $\widehat{\phi} (t) = \rho_{m,d} (1+\|t\|^2)^{-m}$, 
$t\in\mathbb{R}^d$, for some $\rho_{m,d} > 0$, hence $\phi$ is positive definite and the 
native space it generates is the Bessel potential space 
\begin{equation*}
H_2^m (\mathbb{R}^d) = \{ f\in L_2 (\mathbb{R}^d) : \widehat{f} 
= (1+\|\!\cdot\!\|^2)^{-m/2}\,\widehat{g},\ g\in L_2 (\mathbb{R}^d) \}.
\end{equation*}
This coincides with the usual Sobolev space when $m$ is a positive integer; in this case, 
Mat\'{e}rn kernels are also known under the name `Sobolev splines'. Also, $\phi$ is the 
fundamental solution of the (pseudo)differential operator $(1-\Delta)^m$, where $\Delta$ 
is the Laplace operator in $\mathbb{R}^d$. 
}

\textup{Since the condition $2m>d$ permits the application of Poisson's Summation Formula, as 
in Example~\ref{ex:Gaussian}, the associated cardinal symbol $\sigma_\phi$ is seen to satisfy 
the positivity condition (\ref{eq:pos-symbol}). Therefore Theorems~\ref{th:CI-decay-exp} and 
\ref{th:SCI-decay-Lag} provide the exponential decay of the Lagrange functions for cardinal 
and semi-cardinal interpolation with any Mat\'{e}rn kernel $\phi$ satisfying $2m>d$.
}

\textup{Note that, so far, only the univariate case ($d=1$) in which $m$ is a positive integer has 
been covered before  in the cardinal interpolation literature. In this case, since the modified 
Bessel function of half-integer order is expressible in finite terms \cite[Eq.\ 3$\cdot$71(12)]{wat66}, 
the Mat\'{e}rn kernel takes the elementary form:
\begin{equation}
\phi (x) = |x|^{m-\frac{1}{2}} K_{m-\frac{1}{2}} (|x|) = e^{ -|x| } \sum_{r=0}^{m-1} c_{m,r} |x|^r,
\label{eq:exp-L-spline}
\end{equation}
for some positive constants $\{c_{m,r}\}$. The main properties of cardinal interpolation with 
this exponential spline kernel on the real line follow from Micchelli's general `cardinal $L$-spline' 
theory \cite{micch76}. Using different methods, Micchelli's comprehensive treatment includes, 
among other results, the exponential decay of the Lagrange function $\chi$ for cardinal 
interpolation on $\mathbb{Z}$.
}

\textup{Also, in \cite{bkr07} and \cite{ab08}, cardinal and semi-cardinal interpolation schemes for a 
scaled version of the univariate kernel (\ref{eq:exp-L-spline}) have played a prominent role in 
the construction, via separation of variables, of multivariate `polysplines' interpolating continuous 
functions prescribed on equi-spaced parallel hyperplanes. These two references employ Fourier 
transforms rather than kernel representations in order to obtain the exponential decay of the 
corresponding Lagrange functions.
}

\textup{For any dimension $d$, the choice $m=\frac{d+1}{2}$ provides the particularly simple 
kernel $\phi (x) = \|x\|^{\frac{1}{2}} K_{\frac{1}{2}} (\|x\|) = \sqrt{\frac{\pi}{2}} \, e^{ -\|x\| }$. 
}
\end{example}

\begin{example}[Generalized inverse multiquadric]	\label{ex:GIM}
\textup{This kernel is given by 
\begin{equation*}
\phi (x) = (c^2+\|x\|^2)^{-m}, \quad x\in\mathbb{R}^d, 
\end{equation*}
where $2m > d$, $c>0$, so the algebraic decay condition (\ref{eq:decay-alg}) holds with 
the rate $2m$. This example essentially exchanges $\phi$ with $\widehat{\phi}$ from 
Example~\ref{ex:Matern}, and by usual transform laws we have:
\begin{equation*}
\widehat{\phi} (t) = \frac{1}{\rho_{m,d}}  \left( \frac{\|t\|}{c} \right)^{m-\frac{d}{2}} 
K_{m-\frac{d}{2}} (\|c\,t\|), \quad t\in\mathbb{R}^d.
\end{equation*}
Since $r^{\nu} K_{\nu} (r)$ takes only positive values for $\nu>0$ and $r\in[0,\infty)$, 
it follows that $\phi$ is positive definite and we may again conclude, via Poisson's 
Summation Formula, that the associated cardinal symbol satisfies the positivity condition 
(\ref{eq:pos-symbol}). Consequently, for cardinal and semi-cardinal interpolation with the 
kernel $\phi$ satisfying $2m>d$, Theorems~\ref{th:CI-decay-poly}, \ref{th:SCI-decay-symb} 
and \ref{th:SCI-decay-Lag} provide the transfer of the algebraic decay rate $2m$ 
to the Lagrange functions, as well as to their kernel expansion coefficients.
}

\textup{In the case of cardinal interpolation, this result represents a substantial improvement 
over decay rates recently obtained by Hamm and Ledford in \cite{hl16,hl18}, who derived 
their results from estimates based on the smoothness of Fourier transforms. 
Specifically, assuming $d=1$ and $m>1$ (with our notation), \cite[Corollary~2]{hl16} states 
the algebraic rate $\lceil 2m-2 \rceil$ for the decay of the Lagrange function $\chi$ for 
cardinal interpolation. Also, under the assumption $2m>2d+1$ (with our notation), 
\cite[Theorem~3.1]{hl18} proves the algebraic rate $\lfloor 2m-d \rfloor$ if 
$m\not\in\mathbb{Z}$, and $2m-d-1$ if $m\in\mathbb{Z}$, for the kernel expansion 
coefficients of $\chi$, while \cite[Corollary~4.7 \& Eq.\ (10)]{hl18} imply the 
algebraic decay rate $d+1$ for $\chi$.
}
\end{example}

\begin{example}[B-splines and box-splines] 	\label{ex:box}
\textup{The study of univariate cardinal interpolation with the central B-spline $\phi=M_n$ 
of polynomial degree $n-1$, compactly supported on $[-n/2,n/2]$, and of class 
$C^{n-2}(\mathbb{R})$, was initiated in Schoenberg's 1946 paper 
\cite{schoen46}. As pointed in \cite{schoen46}, the explicit piecewise 
polynomial form of $M_n$, as well as its Fourier transform 
\begin{equation*}
\widehat{M_n} (t) = \left(\frac{\sin(t/2)}{t/2}\right)^n,\quad t\in \mathbb{R},
\end{equation*}
can be traced back to Laplace's work on probability. 
Clearly, $M_n$ is positive definite iff $n$ is an even integer.
}

\textup{Schoenberg proved that the cardinal symbol $\sigma_\phi$ (expressed in terms 
of the so-called Euler-Frobenius polynomial) associated to $\phi=M_n$ 
satisfies the positivity condition (\ref{eq:pos-symbol}), irrespective of the parity of $n$. 
This enabled his construction (see \cite[\S4.5]{schoen73}) of \emph{cardinal} interpolation 
for the B-spline kernel $M_n$, with an exponentially decaying Lagrange function $\chi$. 
}

\textup{Subsequently, in \cite{schoen73,schoen73b}, Schoenberg employed two methods
to construct \emph{semi-cardinal} schemes for interpolation on $\mathbb{Z}_+$, 
with splines of odd degree only. The existence and uniqueness properties of these schemes 
required the introduction of boundary conditions imposed at the origin. However, Schoenberg  
obtained the exponential decay of the corresponding semi-cardinal Lagrange functions without 
relying on B-spline representations in terms of the kernel $\phi=M_n$. For arbitrary (even or odd) 
$n$, the construction of semi-cardinal interpolation based on such representations
is given in the forthcoming paper \cite{ab20}, in which boundary conditions are expressed 
in terms of finite differences (FD) of B-spline coefficients. 
}

\textup{To illustrate this approach, we briefly consider the cubic spline case $n=4$, which also 
serves as model for a bivariate extension described further below.
Since the support of $M_4$ is $[-2,2]$, any cubic spline defined on $[0,\infty)$, 
with knots at the nonnegative integers, admits a B-spline representation
\begin{equation}
s(x) = \sum_{k=-1}^\infty c_k M_4 (x-k), \quad x\in [0,\infty),
\label{eq:B-spline-rep}
\end{equation}
for some coefficients $c_k$, $k\geq-1$. After imposing the interpolation conditions 
$s(j) = y_j$, $j\in\mathbb{Z}_+$, there is still one `degree of freedom' to be utilized by a
boundary condition for $s$. The `natural' condition $s'' (0) = 0$ used in \cite{ab06} 
is equivalent to the second order FD condition 
\begin{equation}
c_{-1} - 2c_0 + c_1 = 0,
\label{eq:NAT}
\end{equation}
while the `not-a-knot' condition $s'''(1-) = s'''(1+)$, which eliminates the spline knot at $1$, 
can be expressed equivalently (see \cite{sb03}) as:
\begin{equation}
c_{-1} - 4c_0 + 6c_1 - 4c_2 + c_3 = 0.
\label{eq:NAK}
\end{equation}
In addition, for $\phi = M_4$ and $H = \mathbb{Z}_+$, representation (\ref{eq:SCI-repres}) 
of the present paper is seen to be obtained by imposing the boundary condition 
$c_{-1} = 0$ in (\ref{eq:B-spline-rep}). 
}

\textup{The extension of the \emph{cardinal} spline interpolation theory to several variables, in a 
non-tensor product setting, was started by de Boor, H\"{o}llig, and Riemenschneider \cite{dBHR85}, 
for certain bivariate `box spline' kernels, which are compactly supported and piecewise polynomial 
over a three-direction partition of the plane. The ensuing theory of cardinal interpolation with box 
splines in arbitrary dimension is much more intricate than its univariate version; for a thorough 
introduction to the main results, see the monograph \cite[Chapter~IV]{dBHR93}.
}

\textup{In \cite{ab06}, the author presented a semi-cardinal scheme for interpolation at the points 
of the half-plane lattice $H=\mathbb{Z}\!\times\!\mathbb{Z}_+$ with the three-direction box-spline 
$\phi = M_{2,2,2}\in C^2 (\mathbb{R}^d)$, whose direction matrix has every multiplicity $2$. 
This box-spline is an example of a non-radial positive definite kernel which may be regarded as 
a bivariate analog of $M_4$, and its only non-zero values on $\mathbb{Z}^2$ are:
\begin{equation*}
\phi(0,0) = \frac{1}{2}, \ \ \phi(\pm(1,1)) = \phi(0,\pm1) = \phi(\pm1,0) = \frac{1}{12}.
\end{equation*}
It follows that the associated cardinal symbol satisfies (\ref{eq:pos-symbol}), since
\begin{equation*}
\sigma_\phi (e^{it_1},e^{it_2}) = \frac{1}{6} (3 + \cos t_1 + \cos t_2 + \cos (t_1+t_2) ) \geq \frac{1}{4} ,
\quad t\in \mathbb{R}^2.
\end{equation*}
The approach of \cite{ab06} is based on the fact that, due to the size of the support of 
$M_{2,2,2}$, the restriction of a bivariate cardinal series 
$s = \sum_{k\in\mathbb{Z}^2} c_k M_{2,2,2} (\cdot-k)$ to the upper half-plane 
$\mathbb{R}\!\times\![0,\infty)$ has a representation analogous to (\ref{eq:B-spline-rep}):
\begin{equation}
s(x) = \sum_{k_2=-1}^\infty \sum_{k_1=-\infty}^\infty 
c_{k_1,k_2} M_{2,2,2} (x_1-k_1,x_2-k_2), \quad x\in \mathbb{R}\!\times\![0,\infty).
\label{eq:box-spline-rep}
\end{equation}
This permitted the formulation of a system of FD edge conditions in terms  
of the box-spline coefficients $c_{k_1,k_2}$, with the weights stencil (A) shown below, 
as the analog of the `natural' univariate stencil (\ref{eq:NAT}).
Further, Sabin and Bejancu \cite{sb03,bs05} employed the stencil (B), which extends the 
`not-a-knot' univariate stencil (\ref{eq:NAK}). 
\begin{equation*}
\textup{(A)} \quad
\begin{array}{rr}
\mbox{\ } &  		  1 \\
	   -1 &    		 -1 \\
	    1 &     \mbox{\ }  
\end{array}
\qquad\qquad\qquad
\textup{(B)} \quad
\begin{array}{rrr}
\mbox{\ } &    \mbox{\ } &  		  1 \\
\mbox{\ } &   	       -2 &    	 -2 \\
	    1 &   	        4 &    	  1 \\
  	   -2 &   	       -2 &  \mbox{\ } \\ 
	    1 &   \mbox{\ } &  \mbox{\ }  
\end{array}
\end{equation*}
(Note that the bottom weights of both stencils are applied to the coefficients $c_{k_1,k_2}$ 
corresponding to $k_2 = -1$.)
Moreover, for $\phi = M_{2,2,2}$ and $H=\mathbb{Z}\!\times\!\mathbb{Z}_+$, 
representation (\ref{eq:SCI-repres}) of the present paper can also be 
derived from (\ref{eq:box-spline-rep}) by simply imposing the zero-order FD conditions 
\begin{equation}
c_{k_1,k_2}=0,\quad  (k_1,k_2)\in\mathbb{Z}\!\times\!\{-1\}.
\label{eq:NULL}
\end{equation}
}

\textup{Using explicit expressions for box spline coefficients, \cite{ab06} and \cite{bs05} 
established not only the exponential decay of the bivariate Lagrange functions, 
but also the polynomial reproduction properties of the semi-cardinal box-spline schemes. 
In turn, these implied that the `stationary' scaled scheme with box-spline edge conditions 
based on (A) achieves exactly half of the maximal approximation order $4$ 
known to hold for scaled cardinal interpolation with $M_{2,2,2}$, 
while the edge conditions of type (B) have the effect of restoring the 
full approximation order. 
}

\textup{On the other hand, it can be proved that the stationary scaled scheme based on 
representation (\ref{eq:SCI-repres}) of this paper, i.e.\ on the boundary conditions (\ref{eq:NULL}), 
possesses no approximation order for $\phi = M_{2,2,2}$. However, the advantage of this 
scheme over the related schemes in \cite{ab06,bs05} rests on the fact that it extends, 
for any dimension and any half-space lattice $H$, to arbitrary box-spline kernels 
that generate `correct' cardinal interpolation. A similar extension for multivariable semi-cardinal 
schemes with FD boundary conditions of positive order poses a challenging problem. 
}
\end{example}

\begin{example}[Polyharmonic B-splines] 		\label{ex:psiBspline}
\textup{This example is related to cardinal interpolation with kernels $\phi(x)$ that actually grow 
as $\|x\|\to\infty$, in which case the symbol $\sigma$ cannot be defined classically and the 
associated Laurent operator $L_\phi$ is not bounded anymore on $\ell_2$.
Nevertheless, a distributional Fourier transform approach still allows the construction 
of a suitably decaying Lagrange function $\chi$ with an absolutely convergent kernel 
representation (\ref{eq:CI-lagrange}), as demonstrated, for polyharmonic kernels, 
by Madych and Nelson \cite{mn90}, and, for other non-decaying kernels 
including the multiquadrics, by Buhmann \cite{mdb90}. 
}

\textup{In particular, for each integer $m>d/2$, \cite{mn90} considered cardinal interpolation 
with the $m$-harmonic kernel
\begin{equation*}
\phi(x) = c_{m,d} \left\{
\begin{array}{ll}
\|x\|^{2m-d} \ln \|x\|, & \mathrm{if}\ d\ \mathrm{is\ even}, \\
\|x\|^{2m-d},  & \mathrm{if}\ d\ \mathrm{is\ odd}, 
\end{array}
\right.
\quad x\in\mathbb{R}^d.
\end{equation*}
For a suitable constant $c_{m,d}$, this satisfies $\Delta^m \phi = \delta$, where 
$\Delta$ is the $d$-dimensional Laplace operator and $\delta$, the Dirac distribution.
Based on the analyticity properties of the Fourier transform $\widehat{\chi}$,
Madych and Nelson proved that the corresponding Lagrange function $\chi$ and its 
coefficients of representation (\ref{eq:CI-lagrange}) decay exponentially. 
}

\textup{For each $i=1,\ldots,d$, let $\nabla_i$ denote the second-order central difference 
operator of unit step in the $i$-th variable. In analogy with odd-degree polynomial B-splines, 
Rabut \cite{rabut90,rabut92a} considered the `elementary $m$-harmonic cardinal B-spline' 
defined by 
\begin{equation*}
\psi := \left( \sum_{i=1}^d \nabla_i \right)^m \phi.
\end{equation*}
In the thin plate spline case $m=d=2$, this bell-shaped kernel had been used before by 
Dyn and Levin \cite{dl81,dl83}, for surface fitting in a bounded domain. 
Although $\psi$ does not have a compact support, 
it decays algebraically, with $\psi(x) = O (\|x\|^{-(d+2)})$, 
for large $\|x\|$. Rabut showed that the cardinal interpolation 
Lagrange function $\chi$ generated by $\phi$ also admits a series representation 
of the type (\ref{eq:CI-lagrange}) with $\phi$ replaced by $\psi$ and with 
coefficients decaying at the same algebraic rate as $\psi$. 
}

\textup{In \cite{rabut90,rabut92b}, Rabut has replaced $\sum_{i=1}^d \nabla_i$ in the 
above definition of $\psi$ with a higher order discretization of the Laplace operator. 
This led him to `high level' $m$-harmonic cardinal B-splines with faster decay, 
reaching $O (\|x\|^{-(d+2m)})$, for large $\|x\|$. Also, Van De Ville et al.\ \cite{vbu05} 
introduced a related class of polyharmonic B-splines in two and three variables, 
using a more isotropic discretization of the Laplacean. Compared to Rabut's  
`elementary' B-splines, these possess some improved features, including a 
$O (\|x\|^{-(d+4)})$ decay. Both the `high level' and the `isotropic' polyharmonic 
B-splines provide corresponding kernel representations for Madych and Nelson's 
cardinal Lagrange function $\chi$. 
}

\textup{For each choice of a polyharmonic B-spline kernel $\psi$ described above, 
our Theorem~\ref{th:SCI-decay-Lag} establishes the algebraic decay of the 
Lagrange functions for semi-cardinal interpolation on a half-space lattice, 
with the same rate as that of $\psi$. It should be remarked that, for 
$H=\mathbb{Z}^{d-1}\!\times\!\mathbb{Z}_+$, the semi-cardinal scheme generated 
here by such a kernel $\psi$ does not coincide with that constructed by Bejancu 
\cite{ab00b} via the distributional Fourier transform of $\phi$, due to different  
Wiener-Hopf factorizations of the corresponding inverse cardinal symbols. 
}

\textup{The idea of using a pseudo B-spline kernel $\psi$ generated from differences of 
polyharmonic  or of other non-decaying kernels, such as the multiquadrics, has been a constant 
theme of  the radial basis function literature, e.g., \cite{bd96,blm11,bls06,cjw92,dl81,dl83}. 
In this context, it may be of interest to ask whether, in analogy to Example~\ref{ex:box}, the 
formulation of FD boundary conditions in terms of kernel coefficients can lead 
to improved practical schemes for interpolation in bounded domains. 
}
\end{example}

\noindent
\textbf{Acknowledgements.} 
I am grateful to Thomas Hangelbroek for a useful discussion in Arcachon in June 2018 on 
one of the examples, which subsequently triggered the generalization obtained in this paper. 
Also, I thank Malcolm Sabin, who suggested the possibility of using the zero-order FD 
conditions (\ref{eq:NULL}) at the time of writing our joint papers \cite{bs05,sb03}.

\end{document}